\documentclass[12pt,reqno]{amsart}
\usepackage{amssymb}
\usepackage{mathrsfs,mathtools}
\usepackage{palatino}
\usepackage{bbm}
\usepackage{pictex}
\usepackage{amscd}
\usepackage{tikz-cd}
\usepackage{latexsym}
\usepackage{amssymb}
\usepackage{enumitem}
\usepackage{eucal}
\usepackage{MnSymbol}
\usepackage{verbatim}
\usepackage{bbm}
\usepackage{hyperref}
\hypersetup{colorlinks,linkcolor=blue,urlcolor=cyan,citecolor=red}

\newcommand{\fg}{\mathfrak{g}}

\newcommand{\kk}{\mathbbm{k}}

\newcommand{\Bin}{\operatorname{Bin}}

\newcommand{\HC}{\operatorname{HC}}

\allowdisplaybreaks[2]

\numberwithin{equation}{section}
\newtheorem{theorem}{Theorem}[section]
\newtheorem{proposition}[theorem]{Proposition}
\newtheorem{lemma}[theorem]{Lemma}
\newtheorem{corollary}[theorem]{Corollary}
\newtheorem*{theorem*}{Theorem}
\newtheorem*{lemma*}{Lemma}
\newtheorem*{corollary*}{Corollary}
\theoremstyle{remark}
\newtheorem*{remark*}{Remark}

\title[Integral forms of Yangians]{Integral forms for Yangians of classical type}
\author[Hao Chang, Hongmei Hu, \lowercase{and}  Yue Hu  ]{Hao Chang, Hongmei Hu, \lowercase{and} Yue Hu  }
\address[H. Chang]{School of Mathematics and Statistics, Central China
Normal University, Wuhan 430079, China}
\email{chang@ccnu.edu.cn}
\address[H. Hu]{Department of Mathematics, Shanghai Maritime University, Shanghai 201306, China}
\email{hmhu@shmtu.edu.cn}
\address[Y. Hu]{School of Mathematical Sciences, Beijing University of Posts and Telecommunications, Beijing 102206, China}
\email{huyue@bupt.edu.cn}

\subjclass[2020]{Primary 17B37; Secondary 17B35, 17B50}
\keywords{Integral form, PBW basis,
Restricted Yangian, Modular representations}

\begin{document}

\begin{abstract}
Let $\mathfrak g$ be a simple Lie algebra of classical type.
We construct a $\mathbb Z[1/2]$-integral form of the Yangian
$Y(\mathfrak g)$ generated by divided powers of the positive
and negative Drinfeld generators, and establish its triangular
decomposition and PBW basis.
Under the loop filtration, its associated graded algebra is
the current algebra part of Garland's integral form. As applications, over an algebraically closed field of
characteristic $p>3$, we identify the small Yangian
with the restricted Yangian and classify their
finite dimensional simple modules in terms of restricted
Drinfeld polynomials.
\end{abstract}

\maketitle

\section{Introduction}
\label{sec:introduction}

Integral forms generated by divided powers are used to study
Lie algebras over fields of positive characteristic and
quantum groups at roots of unity. For semisimple Lie algebras, Kostant's integral form \cite{Kos} provides stable lattices in finite dimensional highest weight modules in characteristic zero, whose reductions yield highest weight modules in positive characteristic \cite{JanG}.
Garland \cite{Gar} extended the construction to untwisted
affine Lie algebras. Using Garland's integral form, Jakelić and Moura \cite{JM} showed that finite dimensional simple modules of hyper loop algebras can be obtained as quotients of reductions of suitable characteristic zero highest weight modules.  For quantized enveloping algebras, Lusztig \cite{Lus}
developed integral forms using quantum divided powers and
studied their specializations at roots of unity.  Chari and Pressley \cite{CP}  extended Garland's approach to quantum affine algebras, establishing the corresponding Cartan elements and triangular decomposition. 
They classified the finite dimensional simple type~I
modules at roots of unity by Drinfeld polynomials. In this paper, we study the corresponding questions for Yangians, which are filtered deformations of enveloping algebras of current Lie algebras. We construct a \(\mathbb Z[1/2]\)-integral form for Yangians of all classical types, identify the small Yangians with restricted Yangians, and classify their finite dimensional simple modules in characteristic \(p>3\).

Let $\mathfrak g$ be a simple Lie algebra of
type $A$, $B$, $C$, or $D$, and put
$R=\mathbb Z[1/2]$.
We use Drinfeld's presentation of the Yangian
$Y(\mathfrak g)$ \cite{D} (see also \cite{Mol07} and \cite{JLM}),
with generators
$\{e_{i,r},f_{i,r},h_{i,r}\mid i\in I,\ r\geq0\}$,
where $I$ indexes the simple roots.
Define
\[
\mathcal Y_R(\mathfrak g)
=
R\left\langle
e_{i,r}^{(m)},f_{i,r}^{(m)}
\,\middle|\,
i\in I,\ r,m\geq0
\right\rangle
\subseteq Y(\mathfrak g),
\qquad
x^{(m)}=\frac{x^m}{m!}.
\]
For quantum groups of finite or affine type, Lusztig's braid group actions provide a standard way to construct 
PBW root vectors.
This approach does not directly apply to the
divided power integral form of the Yangian considered here.
In \cite{CHH},  a shuffle algebra realization was used
to construct an $R$-basis of the positive part
consisting of ordered products of divided powers
of Lyndon root vectors.
The corresponding result for the negative part
follows by an anti-automorphism. To describe the full integral form, one must identify the
Cartan elements arising from products of positive
and negative divided powers and establish reordering
formulas over $R$. For applications to highest weight representations, these Cartan elements should also encode the
Drinfeld polynomials through their action on
highest vectors.

The Cartan elements we construct are the coefficients
of the series
\[
Q_i(u)=1+\sum_{m\geq1}P_{i,m}u^{-m},
\]
defined by
\[
(1+d_i/u)^{h_{i,0}}Q_i(u+d_i)
=
H_i(u)Q_i(u),
\]
where
$H_i(u)=1+d_i\sum_{r\geq0}h_{i,r}u^{-r-1}$
and $d_i$ are the symmetrizing factors of Cartan matrix.
This normalization is chosen so that, on a highest vector $v$ with highest series
$D_i(u+d_i)/D_i(u)$, one has
\[
Q_i(u)v=u^{-\deg D_i}D_i(u)v.
\]
Thus the coefficients $P_{i,m}$ recover those of
the Drinfeld polynomials.
Chari and Pressley \cite{CP} used a similar
normalization for the Cartan series of quantum
affine algebras. In the Yangian case, however, the shift
$u\mapsto u+d_i$ and the factor
$(1+d_i/u)^{h_{i,0}}$ give rise to lower
loop degree terms involving $h_{i,0}$
in the coefficients of $Q_i(u)$. Consequently, the Yangian analogue of
Chari--Pressley's basic lemma \cite[Lemma 5.1]{CP} requires
a further correction to the Cartan series.

To explain this difference, consider the rank one case
$Y(\mathfrak{sl}_2)$, and write
$e_r,f_r,h_r$ for its Drinfeld generators and
$Q(u)=1+\sum_{m\geq1}P_m u^{-m}$
for the corresponding Cartan series.
In Chari--Pressley's basic lemma for
$U_q(\widehat{\mathfrak{sl}}_2)$, the Cartan
projection of $(x_0^+)^{(n)}(x_1^-)^{(n)}$
is a multiple of the $n$-th coefficient of
their Cartan series, as in Garland's classical
identity (\cite[Lemma 7.5]{Gar}).
For the Yangian, our rank one
basic lemma (Theorem~\ref{thm:rank-basic}) gives
\[
\operatorname{HC}\bigl(e_0^{(n)}f_1^{(n)}\bigr)
=
(-1)^n[u^{-n}]
\frac{Q(u)}{B_{h_0-n+1}(u)},
\]
where $\operatorname{HC}$ denotes the Cartan projection.
The additional factor $B_{h_0-n+1}(u)^{-1}$ is needed because the normalization
of $Q(u)$ by Drinfeld polynomials does not
directly give the Cartan terms arising from
these divided power products.
The coefficients of $B_x(u)$ and its inverse
are integer valued polynomials in $x$,
so these corrections preserve integrality.
Moreover, the resulting identity is triangular
in the coefficients $P_m$ and allows us to prove
that they belong to the subalgebra generated
by divided powers (see \eqref{eq:QPtriangular}).

To extend the rank one identity to arbitrary products
$e_r^{(a)}f_s^{(b)}$, we use formal automorphisms
$\Phi_{c,z}$ of $Y(\mathfrak{sl}_2)[[z]]$
(cf. \cite{Zhang}).
We prove that they preserve the positive and negative
divided power integral forms
(Proposition~\ref{prop:prelim-formal-automorphisms})
and use them to establish the integrality of the
coefficients of divided powers of the generating series.
Together with the Cartan commutation relations and
the bases of the positive and negative parts from
\cite{CHH}, these results yield a triangular
decomposition of the full integral form in all
classical types.
More precisely, let $\mathcal H_R(\mathfrak g)$
be the $R$-algebra generated by
$\binom{h_{i,0}}{b}$ and $P_{i,m}$.
We prove that multiplication induces an isomorphism
of $R$-modules
\[
 \mathcal Y_R^{<}(\mathfrak g)
 \otimes_R\mathcal H_R(\mathfrak g)
 \otimes_R\mathcal Y_R^{>}(\mathfrak g)
 \xrightarrow{\;\sim\;}
 \mathcal Y_R(\mathfrak g).
\]
This gives an explicit $R$-basis of
$\mathcal Y_R(\mathfrak g)$
(Theorem~\ref{thm:general-pbw}).
Moreover, under the loop filtration, the leading
terms of $P_{i,m}$ are Garland's Cartan generators,
and
\[
 \operatorname{gr}\mathcal Y_R(\mathfrak g)
 =
 U_R(\mathfrak g[t]),
\]
where the right hand side is the current algebra
part of Garland's integral form
(Proposition~\ref{prop:associated-graded}).
Thus $\mathcal Y_R(\mathfrak g)$ is an integral
filtered deformation of $U_R(\mathfrak g[t])$.

We next specialize to an algebraically closed field
$\kk$ of characteristic $p>3$.
In type $A$, Goodwin and Topley \cite{GT} defined
restricted shifted Yangians and their truncations
using the $p$-center.
Our integral form provides a realization of the
restricted Yangian for each classical type.
We call
\[
 \mathscr Y_\kk(\mathfrak g)
 =
 \kk\otimes_R\mathcal Y_R(\mathfrak g)
\]
the hyper Yangian, and define the small Yangian
$y_\kk(\mathfrak g)$ to be its subalgebra generated
by the images of the Drinfeld generators. We prove that it is isomorphic to the restricted
Yangian $Y_\kk^{[p]}(\mathfrak g)$, defined as the
quotient of the modular Yangian by the ideal
generated by the $p$-th powers of the positive
and negative Lyndon root vectors and explicit
central Cartan elements.
This extends the identification in \cite{CHH}
from the positive part to the full Yangian
(Theorem~\ref{thm:small-restricted}).

The integral form  provides a way to study
finite dimensional representations in positive characteristic. For the restricted Yangian of $\mathfrak{gl}_2$,
Chang, Hu, and Topley \cite{CHT} established a
classification of finite dimensional simple modules
in terms of highest weights and Drinfeld polynomials.
Their construction of finite dimensional modules
relies on evaluation modules, tensor products,
and twisting automorphisms, which do not readily
extend to arbitrary classical types. Our strategy is to construct
a finite free stable lattice in the corresponding 
characteristic zero module through  the integral form $\mathcal Y_R(\mathfrak g)$. We lift the
Drinfeld polynomials to a discrete valuation ring
and consider the corresponding finite dimensional
simple modules for Yangian over its fraction field. 
A nondegenerate bilinear form (Lemma \ref{lem:rep-contravariant-form}) 
allows us to prove that the cyclic submodule
generated by a highest vector under the integral
form is a finite free stable lattice
(Proposition~\ref{prop:rep-stable-lattice}).
Reducing this lattice yields a finite dimensional
highest weight module for $\mathscr Y_\kk(\mathfrak g)$ (Corollary \ref{cor:rep-finite-realization}). We then use these modules to study the finite dimensional
simple representations of the small Yangian. We prove that every finite dimensional simple
$y_\kk(\mathfrak g)$-module is highest weight
and classify these modules by $p$-reduced  Drinfeld polynomials
(Theorem~\ref{thm:rep-restricted-classification}). The isomorphism in Theorem~\ref{thm:small-restricted}
gives the same classification for restricted Yangian 
$Y_\kk^{[p]}(\mathfrak g)$. This generalizes \cite[Theorem~3.11]{CHT}   to all classical types.

The paper is organized as follows.
In Section~\ref{sec:preliminaries}, we recall Drinfeld's
presentation of Yangian and  introduce the Cartan series and
formal automorphisms  used later.
In Section~\ref{sec:rank-one}, we prove the basic
divided power identity and establish the triangular
decomposition and PBW theorem for the integral form
of $Y(\mathfrak{sl}_2)$. 
In Section~\ref{sec:classical}, we extend the rank one results
to all classical types and identify the associated
graded algebra of our integral forms with the current algebra part of
Garland's integral form.
In Section~\ref{sec:small-yangian}, we prove that
the small Yangian is isomorphic to the restricted
Yangian. In Section~\ref{sec:rep-lattices}, we construct
stable lattices in finite dimensional characteristic zero   highest weight modules and reduce them modulo $p$
to obtain highest weight modules for the hyper Yangian.
In Section~\ref{sec:restricted-representations},
we classify finite dimensional simple modules
of the small and restricted Yangians by tuples
of $p$-reduced Drinfeld polynomials and discuss
further questions.

\section{Preliminaries}
\label{sec:preliminaries}

\subsection{Drinfeld Yangians and divided powers of root vectors}
\label{subsec:yangian-pbw}

Let $\mathfrak g$ be a classical simple Lie algebra of rank $n$ over
$\mathbb C$, with root system $\Delta$ and Cartan matrix
$(a_{ij})_{i,j\in I}$, where $I=\{1,\ldots,n\}$. We use the
notations of \cite[Section~2.1]{CHH}. Thus we fix 
$d_i=1$ for all $i$ in types $A$ and $D$, whereas
\[
(d_1,\ldots,d_n)
=
\begin{cases}
(1,\ldots,1,\frac12),&\text{in type }B,\\
(1,\ldots,1,2),&\text{in type }C.
\end{cases}
\]
In particular,
$d_i a_{ij}=d_j a_{ji}$. Following Drinfeld \cite{D}, the Yangian $Y(\mathfrak g)$ is the
$\mathbb C$-algebra generated by $\{e_{i,r}, f_{i,r}, h_{i,r}\ |\ i\in I,\ r\in\mathbb N\}$. Put $x_{i,r}^+=e_{i,r}$ and $x_{i,r}^-=f_{i,r}$. Then the defining
relations are
\begin{align}
[h_{i,r},h_{j,s}]&=0,
\label{eq:prelim-hh}\\
[e_{i,r},f_{j,s}]&=\delta_{ij}h_{i,r+s},
\label{eq:prelim-ef}\\
[h_{i,0},x_{j,s}^{\pm}]&=\pm a_{ij}x_{j,s}^{\pm},
\label{eq:prelim-h0x}\\
[h_{i,r+1},x_{j,s}^{\pm}]
-[h_{i,r},x_{j,s+1}^{\pm}]
&=
\pm\frac{d_i a_{ij}}{2}
\bigl(h_{i,r}x_{j,s}^{\pm}
+x_{j,s}^{\pm}h_{i,r}\bigr),
\label{eq:prelim-hx}\\
[x_{i,r+1}^{\pm},x_{j,s}^{\pm}]
-[x_{i,r}^{\pm},x_{j,s+1}^{\pm}]
&=
\pm\frac{d_i a_{ij}}{2}
\bigl(x_{i,r}^{\pm}x_{j,s}^{\pm}
+x_{j,s}^{\pm}x_{i,r}^{\pm}\bigr),
\label{eq:prelim-xx}
\end{align}
together with the Serre relations
\begin{equation}\label{eq:prelim-serre}
\sum_{\sigma\in\mathfrak S_m}
[x_{i,r_{\sigma(1)}}^\pm,
 [x_{i,r_{\sigma(2)}}^\pm,\ldots,
  [x_{i,r_{\sigma(m)}}^\pm,x_{j,s}^\pm]\cdots]]
=0,
\qquad
m=1-a_{ij},\quad i\ne j.
\end{equation}
Let $Y^{>}(\mathfrak g)$, $Y^{<}(\mathfrak g)$, and
$Y^0(\mathfrak g)$ be the subalgebras generated by the $e_{i,r}$,
the $f_{i,r}$, and the $h_{i,r}$, respectively. The PBW theorem for
$Y(\mathfrak g)$ (cf. \cite{Lev,GTL1}) gives the isomorphism
\begin{equation}\label{eq:prelim-rational-triangular}
Y^{<}(\mathfrak g)\otimes_{\mathbb C}Y^0(\mathfrak g)
\otimes_{\mathbb C}Y^{>}(\mathfrak g)
\xrightarrow{\;\sim\;}Y(\mathfrak g).
\end{equation}
Set $R=\mathbb{Z}[1/2]$. Define the $R$-subalgebra
\begin{equation}\label{eq:prelim-full-integral-form}
\mathcal Y_R(\mathfrak g)
=
R\langle e_{i,r}^{(m)},f_{i,r}^{(m)}
\mid i\in I,\ r,m\in\mathbb N\rangle
\subset Y(\mathfrak g),
\end{equation}
and denote its positive and negative part by $\mathcal Y_R^{>}(\mathfrak g)$ and $\mathcal Y_R^{<}(\mathfrak g)$. The assignments
\begin{equation}\label{eq:prelim-tau}
\tau(e_{i,r})=f_{i,r},\qquad
\tau(f_{i,r})=e_{i,r},\qquad
\tau(h_{i,r})=h_{i,r}
\end{equation}
extend to an involutive anti-automorphism of $Y(\mathfrak g)$.
We use the Lyndon root vectors $e_\beta(r)$ fixed in
\cite[(2.15)--(2.17)]{CHH}, and set
\[
f_\beta(r)=\tau(e_\beta(r)).
\]
Order $\Delta^+$ by the lexicographical order on the standard Lyndon
words and $\Delta^+\times\mathbb N$ by
\begin{equation}\label{eq:prelim-root-order}
(\beta,r)<(\gamma,s)
\quad\Longleftrightarrow\quad
\beta<\gamma
\quad\text{or}\quad
\beta=\gamma,\ r>s.
\end{equation}
An arrow $\rightarrow$ denotes multiplication in increasing order
with respect to \eqref{eq:prelim-root-order}, and $\leftarrow$
denotes the reverse order. Following \cite[Theorem~3.8]{CHH},  and using the anti-automorphism $\tau$, we have

\begin{theorem}\label{thm:prelim-one-sided-pbw}
For every $\beta\in\Delta^+$ and $r,m\in\mathbb N$,
\[
e_\beta(r)^{(m)}\in\mathcal Y_R^{>}(\mathfrak g),
\qquad
f_\beta(r)^{(m)}\in\mathcal Y_R^{<}(\mathfrak g).
\]
Moreover, the ordered products
\begin{equation}\label{eq:prelim-one-sided-bases}
\prod_{\beta,r}^{\rightarrow}
e_\beta(r)^{(a_{\beta,r})},
\qquad
\prod_{\beta,r}^{\leftarrow}
f_\beta(r)^{(a_{\beta,r})},
\end{equation}
where $(a_{\beta,r})\in
\mathbb N^{\Delta^+\times\mathbb N}$ has finite support, form
$R$-bases of $\mathcal Y_R^{>}(\mathfrak g)$ and
$\mathcal Y_R^{<}(\mathfrak g)$, respectively.
\end{theorem}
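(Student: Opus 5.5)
This theorem is essentially a transcription of \cite[Theorem~3.8]{CHH}, so the proof is a matter of importing that result and transporting it to the negative part by $\tau$.

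\textbf{Positive part.} By \cite[Theorem~3.8]{CHH}, the following hold.
\begin{enumerate}
\item For every $\beta\in\Delta^+$ and $r,m\in\mathbb N$, the element $e_\beta(r)^{(m)}$ lies in the $R$-subalgebra of $Y^{>}(\mathfrak g)$ generated by the $e_{i,r}^{(m)}$. This subalgebra is $\mathcal Y_R^{>}(\mathfrak g)$ by definition.
\item The ordered products $\prod^{\rightarrow}_{\beta,r} e_\beta(r)^{(a_{\beta,r})}$ form an $R$-basis of this subalgebra.
\end{enumerate}
I will check that the normalizations match: the Lyndon root vectors in \cite{CHH} are those of \cite[(2.15)--(2.17)]{CHH}, the order on $\Delta^+\times\mathbb N$ is \eqref{eq:prelim-root-order}, and $R=\mathbb Z[1/2]$. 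The part of \cite{CHH} that does the real work is the shuffle algebra realization of $Y^{>}(\mathfrak g)$. There one shows that the divided powers of Lyndon root vectors correspond to integral shuffle elements, and one proves spanning and linear independence over $R$ using a leading-term (lexicographic) argument on Lyndon words. I would not redo this; it is the main input and the only genuine obstacle.

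\textbf{Negative part.} Since $\tau$ is an anti-automorphism with $\tau(e_{i,r})=f_{i,r}$, we have $\tau(x^{(m)})=\tau(x)^{(m)}$. Moreover, $\tau$ maps any $R$-subalgebra generated by a set $S$ onto the $R$-subalgebra generated by $\tau(S)$. Hence
\[
\tau\bigl(\mathcal Y_R^{>}(\mathfrak g)\bigr)=\mathcal Y_R^{<}(\mathfrak g).
\]
Applying $\tau$ to $e_\beta(r)^{(m)}\in\mathcal Y_R^{>}(\mathfrak g)$ gives $f_\beta(r)^{(m)}\in\mathcal Y_R^{<}(\mathfrak g)$, using the definition $f_\beta(r)=\tau(e_\beta(r))$.

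Because $\tau$ reverses products,
\[
\tau\Bigl(\prod^{\rightarrow}_{\beta,r} e_\beta(r)^{(a_{\beta,r})}\Bigr)=\prod^{\leftarrow}_{\beta,r} f_\beta(r)^{(a_{\beta,r})}.
\]
Restricted to $\mathcal Y_R^{>}(\mathfrak g)$, the map $\tau$ is an $R$-linear bijection onto $\mathcal Y_R^{<}(\mathfrak g)$, since it is involutive. It therefore carries an $R$-basis to an $R$-basis, which gives the second basis in \eqref{eq:prelim-one-sided-bases}.
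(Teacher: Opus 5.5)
Your proposal is correct and matches the paper's approach: the positive part is quoted directly from \cite[Theorem~3.8]{CHH}, and the negative part is obtained by applying the involutive anti-automorphism $\tau$. As you note, $\tau$ carries $\mathcal Y_R^{>}(\mathfrak g)$ onto $\mathcal Y_R^{<}(\mathfrak g)$, preserves divided powers, and reverses the order of the ordered products.
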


\subsection{The Cartan generators}
\label{subsec:prelim-cartan}

For the rest of this section, let $Y=Y(\mathfrak{sl}_2)$. We write
\[
Y^{>},\quad Y^{<},\quad Y^0,
\qquad
\mathcal Y_R^{>},\quad\mathcal Y_R^{<},\quad\mathcal Y_R
\]
for the corresponding rank one algebras. The following Cartan series is analogous to the one used by
Chari--Pressley in the quantum affine algebra case
\cite[Section~3]{CP}. Put
\begin{equation*}%\label{eq:prelim-H}
H(u)=1+\sum_{r\geq0}h_r u^{-r-1}.
\end{equation*}
Define $Q(u)=\sum_{m\geq0}P_m u^{-m}$ by  $P_0=1$ and 
\begin{equation}\label{eq:rank-difference}
(1+u^{-1})^{h_0}Q(u+1)=H(u)Q(u),
\end{equation}
where
\[
(1+u^{-1})^{h_0}
=
\sum_{m\geq0}\binom{h_0}{m}u^{-m},
\qquad
\binom{x}{m}
=
\frac{x(x-1)\cdots(x-m+1)}{m!}.
\]
Comparing the coefficients of $u^{-m-1}$ in
\eqref{eq:rank-difference} gives
\begin{equation}\label{eq:prelim-P-recursion}
mP_m
=
\sum_{j=0}^{m-1}
\left(
\binom{h_0-j}{m+1-j}-h_{m-j}
\right)P_j
\qquad(m\geq1).
\end{equation}
Hence \eqref{eq:rank-difference} determines the $P_m$ uniquely and,
by induction on $m$,
\begin{equation}\label{eq:prelim-P-triangular}
P_m
=
-\frac{h_m}{m}
+p_m(h_0,\ldots,h_{m-1}),
\qquad
p_m\in\mathbb Q[h_0,\ldots,h_{m-1}].
\end{equation}
For an indeterminate $x$, set
\[
\Bin_{\mathbb Z}(x)
=
\bigoplus_{b\geq0}\mathbb Z\binom{x}{b},
\qquad
\Bin_R(x)
=
R\otimes_{\mathbb Z}\Bin_{\mathbb Z}(x).
\]
These are subrings of $\mathbb Q[x]$. Vandermonde's identity gives
\begin{equation}\label{eq:prelim-binomial-translation}
\binom{x+c}{m}
=
\sum_{j=0}^m
\binom{c}{m-j}\binom{x}{j}
\qquad(c\in\mathbb Z),
\end{equation}
so integer translations preserve both rings. Define
\[
\mathcal H_R
=
\Bin_R(h_0)[P_1,P_2,\ldots]
\subset Y^0.
\]

\begin{lemma}\label{lem:prelim-cartan-basis}
The elements
\begin{equation}\label{eq:prelim-cartan-basis}
\binom{h_0}{b}\prod_{m\geq1}P_m^{c_m},
\qquad
b,c_m\in\mathbb N,
\quad
c_m=0\text{ for all but finitely many }m,
\end{equation}
form an $R$-basis of $\mathcal H_R$.
\end{lemma}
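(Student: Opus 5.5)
Two things have to be checked: that these elements span $\mathcal H_R$ as an $R$-module, and that they are linearly independent over $R$.

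\emph{Spanning.} Since $Y^0$ is commutative by \eqref{eq:prelim-hh}, the ring $\mathcal H_R=\Bin_R(h_0)[P_1,P_2,\ldots]$ is spanned over $R$ by products of an element of $\Bin_R(h_0)$ with monomials in the $P_m$. Moreover, $\Bin_R(h_0)$ is the $R$-span of the $\binom{h_0}{b}$, and it is a subring: the product of two binomial polynomials is an integer-valued polynomial, hence an integer combination of binomials. So every element of $\mathcal H_R$ is an $R$-combination of the elements \eqref{eq:prelim-cartan-basis}. Here $\Bin_R(h_0)$ means the image of $\Bin_R(x)$ under $x\mapsto h_0$.

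\emph{Independence.} Since $Y^0\subset Y$ is torsion-free, it suffices to prove linear independence over $\mathbb Q$.
\begin{itemize}
\item I will use the standard fact, from the PBW theorem \cite{Lev,GTL1}, that $Y^0$ is the polynomial algebra $\mathbb C[h_0,h_1,h_2,\ldots]$ in algebraically independent generators.
\item By \eqref{eq:prelim-P-triangular}, $P_m=-h_m/m+p_m(h_0,\ldots,h_{m-1})$. This transformation is unitriangular up to the nonzero scalars $-1/m$.
\item Hence $\mathbb Q[h_0,h_1,\ldots]=\mathbb Q[h_0,P_1,P_2,\ldots]$, and $h_0,P_1,P_2,\ldots$ are algebraically independent. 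Indeed, the substitution $h_m\mapsto P_m$ is an automorphism of the polynomial ring: by induction on $m$ each $h_m$ is a polynomial in $h_0,P_1,\ldots,P_m$.
\item Therefore the monomials $h_0^{b}\prod_m P_m^{c_m}$ are $\mathbb Q$-linearly independent.
\item Replacing $h_0^b$ by $\binom{h_0}{b}$ is a unitriangular change of basis (up to the factors $b!$) of $\mathbb Q[h_0]$. So the elements \eqref{eq:prelim-cartan-basis} are also $\mathbb Q$-linearly independent, hence $R$-linearly independent.
\end{itemize}

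The only nontrivial input is the algebraic independence of the $h_r$ in $Y^0$. This follows from the rational triangular decomposition \eqref{eq:prelim-rational-triangular} together with the PBW basis of $Y(\mathfrak{sl}_2)$, in which ordered monomials in the $h_r$ are linearly independent. That PBW statement is also visible from the associated graded $U(\mathfrak{sl}_2[t])$, where the images of the $h_r$ are $h\otimes t^r$ and generate a polynomial algebra. This is the step I would cite rather than reprove.
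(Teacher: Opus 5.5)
Your proof is correct and follows the same route as the paper. Spanning is immediate from the definition $\mathcal H_R=\Bin_R(h_0)[P_1,P_2,\ldots]$ inside the commutative algebra $Y^0$, and independence comes from the PBW identification $Y^0=\mathbb C[h_0,h_1,\ldots]$, the triangular relation \eqref{eq:prelim-P-triangular} giving algebraic independence of $h_0,P_1,P_2,\ldots$, and the triangular passage from $h_0^b$ to $\binom{h_0}{b}$; the automorphism claim should rest on that triangularity (which gives an explicit inverse preserving each $\mathbb Q[h_0,\ldots,h_m]$) rather than on surjectivity alone, since a surjective endomorphism of a polynomial ring in infinitely many variables need not be injective.
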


\begin{proof}
By definition, the elements \eqref{eq:prelim-cartan-basis} span $\mathcal H_R$. By the PBW theorem of $Y(\mathfrak g)$,
\[
Y^0=\mathbb C[h_0,h_1,h_2,\ldots].
\]
The relation \eqref{eq:prelim-P-triangular} therefore gives
\[
\mathbb C[h_0,h_1,h_2,\ldots]
=
\mathbb C[h_0,P_1,P_2,\ldots].
\]
In particular, $h_0,P_1,P_2,\ldots$ are algebraically independent.
Since $\left\{\binom{h_0}{b}\mid b\geq0\right\}$ is an $R$-basis of $\Bin_R(h_0)$, the elements in
\eqref{eq:prelim-cartan-basis} are linearly independent over $R$.
\end{proof}

We also need the following scalar series. Set $\beta_0(x)=1$. For
$m\geq1$, let $\beta_m(x)\in\mathbb Q[x]$ be the unique polynomial
satisfying
\begin{equation}\label{eq:rank-beta-recurrence}
\beta_m(0)=0,
\qquad
\beta_m(x+1)-\beta_m(x)
=
x\beta_{m-1}(x),
\end{equation}
and let $B_x(u)
=
1+\sum_{m\geq1}\beta_m(x)u^{-m}$.

\begin{lemma}\label{lem:rank-B-properties}
For every $m\geq0$,
\begin{equation}\label{eq:rank-beta-binomial}
\beta_m(x)\in\Bin_{\mathbb Z}(x).
\end{equation}
The following formulas also hold:
\begin{align}
B_{x+1}(u)
&=
\left(1+\frac{x}{u}\right)B_x(u),\label{eq:rank-B-shifts-1}\\
B_x(u+1)
&=
(1+u^{-1})^{-x}
\left(1+\frac{x}{u}\right)B_x(u)\label{eq:rank-B-shifts-2}.
\end{align}
\end{lemma}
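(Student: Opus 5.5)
We prove the three assertions together by induction on $m$, working throughout with the difference operator $\Delta g(x)=g(x+1)-g(x)$.

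\emph{Integrality \eqref{eq:rank-beta-binomial}.} Recall the structure of $\Bin_{\mathbb Z}(x)$. We have $\Delta\binom{x}{b}=\binom{x}{b-1}$, so $\Delta$ maps $\Bin_{\mathbb Z}(x)$ onto itself. A polynomial $g$ with $g(0)=0$ and $\Delta g\in\Bin_{\mathbb Z}(x)$ therefore lies in $\Bin_{\mathbb Z}(x)$: write $\Delta g=\sum_b c_b\binom{x}{b}$ with $c_b\in\mathbb Z$, and then $g=\sum_b c_b\binom{x}{b+1}$. This choice has the right difference and vanishes at $0$, and such a $g$ is unique.

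By \eqref{eq:rank-beta-recurrence}, it then suffices to show that $x\beta_{m-1}(x)\in\Bin_{\mathbb Z}(x)$ whenever $\beta_{m-1}\in\Bin_{\mathbb Z}(x)$. This follows from
\[
x\binom{x}{b}=(b+1)\binom{x}{b+1}+b\binom{x}{b},
\]
which shows $\Bin_{\mathbb Z}(x)$ is closed under multiplication by $x$. (It is in fact a ring, but we only need this.) Induction on $m$ starting from $\beta_0=1$ gives \eqref{eq:rank-beta-binomial}.

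\emph{First shift formula \eqref{eq:rank-B-shifts-1}.} Compare coefficients of $u^{-m}$ on both sides. The coefficient on the left is $\beta_m(x+1)$. The coefficient on the right is $\beta_m(x)+x\beta_{m-1}(x)$. These agree by \eqref{eq:rank-beta-recurrence}, and the case $m=0$ is trivial.

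\emph{Second shift formula \eqref{eq:rank-B-shifts-2}.} This is the step I expect to be the real content. I would fix the coefficients in $u^{-1}$ and regard both sides as polynomial functions of $x$. Each coefficient of $u^{-m}$ on each side is a polynomial in $x$: $(1+u^{-1})^{-x}=\sum_k\binom{-x}{k}u^{-k}$ has polynomial coefficients, and the re-expansion of $\beta_k(x)(u+1)^{-k}$ in powers of $u^{-1}$ involves only finitely many terms at each order. So it suffices to prove the identity for all $x\in\mathbb N$.

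Set
\[
C_x(u)=(1+u^{-1})^{x}B_x(u+1)-\left(1+\frac{x}{u}\right)B_x(u).
\]
The claim is $C_x(u)=0$. We argue by induction on $x\in\mathbb N$.
\begin{enumerate}
\item[(i)] Base case $x=0$. Here $\beta_m(0)=0$ for $m\geq1$, so $B_0(u)=1$ and both terms of $C_0(u)$ equal $1$.
\item[(ii)] Inductive step. By \eqref{eq:rank-B-shifts-1} applied at $u$ and at $u+1$,
\[
B_{x+1}(u)=\left(1+\frac{x}{u}\right)B_x(u),\qquad B_{x+1}(u+1)=\frac{u+1+x}{u+1}B_x(u+1).
\]
Substituting,
\[
(1+u^{-1})^{x+1}B_{x+1}(u+1)=(1+u^{-1})^{x}\,\frac{u+1}{u}\cdot\frac{u+1+x}{u+1}B_x(u+1)=\frac{u+x+1}{u}(1+u^{-1})^{x}B_x(u+1).
\]
By the inductive hypothesis this equals
\[
\frac{u+x+1}{u}\cdot\frac{u+x}{u}B_x(u)=\left(1+\frac{x+1}{u}\right)B_{x+1}(u),
\]
which is exactly $C_{x+1}(u)=0$.
\end{enumerate}

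Since a polynomial vanishing on all of $\mathbb N$ is zero, the identity holds for indeterminate $x$. Multiplying through by $(1+u^{-1})^{-x}$ gives \eqref{eq:rank-B-shifts-2}.

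The only points to check are that the series manipulations in $u^{-1}$ are legitimate coefficientwise (they are, since each coefficient involves finitely many terms) and that $(1+u^{-1})^{x}(1+u^{-1})^{-x}=1$ holds for indeterminate $x$. The latter is again a polynomial identity in $x$ that is true for every $x\in\mathbb N$, hence true in general.
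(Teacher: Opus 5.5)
Your proof is correct and follows the paper's argument: the same integrality induction via $x\binom{x}{j}=j\binom{x}{j}+(j+1)\binom{x}{j+1}$, the first shift by summing the recurrence, and the second shift by checking it at $x=N\in\mathbb N$ and extending coefficientwise as a polynomial identity in $x$. Your induction on $N$ is just the paper's product formula $B_N(u)=\prod_{j=0}^{N-1}(1+j/u)$ written out step by step.
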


\begin{proof}
Assume inductively that
$\beta_{m-1}(x)\in\Bin_{\mathbb Z}(x)$.
The identity
\[
 x\binom{x}{j}
 =
 j\binom{x}{j}+(j+1)\binom{x}{j+1}
\]
shows that we can write
\[
 x\beta_{m-1}(x)=\sum_{j\geq0}c_j\binom{x}{j},
 \qquad c_j\in\mathbb Z,
\]
with only finitely many nonzero $c_j$. The polynomial $\sum_{j\geq0}c_j\binom{x}{j+1}$
satisfies the defining recurrence for $\beta_m(x)$
and vanishes at $x=0$.
By uniqueness,
\[
 \beta_m(x)=\sum_{j\geq0}c_j\binom{x}{j+1}
 \in\Bin_{\mathbb Z}(x).
\]
This proves \eqref{eq:rank-beta-binomial}. Summing \eqref{eq:rank-beta-recurrence} over $m\geq1$ gives
\[
B_{x+1}(u)
=
\left(1+\frac{x}{u}\right)B_x(u),
\]
 For $x=N\in\mathbb N$, the product
formula gives
\[
B_N(u+1)
=
(1+u^{-1})^{-N}
\left(1+\frac{N}{u}\right)B_N(u).
\]
For each power of $u^{-1}$, both sides are polynomials in $x$.
Since they agree for every $x=N\in\mathbb N$, they agree identically,
and \eqref{eq:rank-B-shifts-2} follows.
\end{proof}

Since $B_x(u)$ has constant coefficient $1$, it has a unique inverse
\begin{equation}\label{eq:inverseBx}
B_x(u)^{-1}
=
\sum_{m\geq0}\gamma_m(x)u^{-m},
\qquad
\gamma_0(x)=1.
\end{equation}
The coefficients satisfy
\[
\gamma_m(x)
=
-\sum_{j=1}^m
\beta_j(x)\gamma_{m-j}(x)
\qquad(m\geq1).
\]
Since $\Bin_R(x)$ is a ring, \eqref{eq:rank-beta-binomial} implies
\begin{equation}\label{eq:rank-gamma-binomial}
\gamma_m(x)\in\Bin_R(x)
\qquad(m\geq0).
\end{equation}

\subsection{Automorphisms of $Y[[z]]$}
\label{subsec:prelim-automorphisms}
Let \(z\) be a central indeterminate and set
\[
Y[[z]]
=
\left\{
 \sum_{n\geq0}a_nz^n
 \,\middle|\,
 a_n\in Y
\right\},
\]
which is the algebra of formal power series in $z$ with coefficients in $Y$.
The  positive and negative subalgebras of  $Y[[z]]$ are denoted by 
$Y^{>}[[z]]$ and $Y^{<}[[z]]$.
Likewise, define
\[
 \mathcal Y_R^{>}[[z]]
 =
 \left\{\sum_{n\geq0}a_nz^n
          \,\middle|\,a_n\in\mathcal Y_R^{>}\right\},
\]
and similarly for $\mathcal Y_R^{<}[[z]]$.
They are $R[[z]]$-subalgebras of $Y[[z]]$.

Let $S$ be the shift operator on the Drinfeld generators,
\[
Se_r=e_{r+1},\qquad
Sf_r=f_{r+1},\qquad
Sh_r=h_{r+1}.
\]
This notation is related to the shift operators appearing in
\cite[Section~2.6]{GTL1}. We shall also use the conjugation formulas
\cite[Lemma~2.7]{GTL1}
\begin{equation}\label{eq:prelim-H-conjugation}
H(u)e_rH(u)^{-1}
=
\frac{u-S+1}{u-S-1}e_r,
\qquad
H(u)f_rH(u)^{-1}
=
\frac{u-S-1}{u-S+1}f_r,
\end{equation}
where the rational functions in $u$ are expanded in powers of
$u^{-1}$.

The automorphisms $\Phi_{c,z}$ in the following proposition can be
obtained from 
\cite[Proposition~4.1]{Zhang}. We give a direct proof and show, moreover, that these
automorphisms preserve the positive and negative integral forms.

\begin{proposition}\label{prop:prelim-formal-automorphisms}
For every $c\in R$, the assignments
\begin{equation}\label{eq:prelim-Phi}
\begin{aligned}
\Phi_{c,z}(e_r)
&=
(1-z(S+c))e_r
=
(1-cz)e_r-ze_{r+1},\\
\Phi_{c,z}(f_r)
&=
(1-z(S+c))^{-1}f_r,\\
\Phi_{c,z}(h_r)
&=
h_r
\end{aligned}
\end{equation}
define a $\mathbb C[[z]]$-algebra automorphism of $Y[[z]]$. Here
\begin{equation}\label{eq:prelim-inverse-mode-factor}
(1-z(S+c))^{-1}x_r
=
\sum_{n\geq0}z^n(S+c)^n x_r
\qquad(x_r=e_r\text{ or }f_r).
\end{equation}
Its inverse is given by
\begin{equation}\label{eq:prelim-Phi-inverse}
\begin{aligned}
\Phi_{c,z}^{-1}(e_r)
&=
(1-z(S+c))^{-1}e_r,\\
\Phi_{c,z}^{-1}(f_r)
&=
(1-z(S+c))f_r,\\
\Phi_{c,z}^{-1}(h_r)
&=
h_r.
\end{aligned}
\end{equation}
Moreover,
\begin{equation}\label{eq:prelim-Phi-integral}
\Phi_{c,z}\bigl(\mathcal Y_R^{>}[[z]]\bigr)
=
\mathcal Y_R^{>}[[z]],
\qquad
\Phi_{c,z}\bigl(\mathcal Y_R^{<}[[z]]\bigr)
=
\mathcal Y_R^{<}[[z]].
\end{equation}
\end{proposition}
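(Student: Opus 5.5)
We first check that the assignments \eqref{eq:prelim-Phi} respect the defining relations \eqref{eq:prelim-hh}--\eqref{eq:prelim-serre} of $Y=Y(\mathfrak{sl}_2)$, extended $\mathbb C[[z]]$-linearly. The guiding observation is that on the span of the $e_r$ (resp. $f_r$) the operator $T=1-z(S+c)$ (resp. $T^{-1}$) is a polynomial, hence a power series, in $S$ with scalar coefficients, so it commutes with $S$.

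\textbf{The $[h,x]$ relations.} Relations \eqref{eq:prelim-hh}, \eqref{eq:prelim-h0x} are clear, since $\Phi$ fixes the $h_r$ and is linear in the modes of $e$, $f$. Relation \eqref{eq:prelim-hx} can be rewritten in generating form as the conjugation formula \eqref{eq:prelim-H-conjugation}, $H(u)e_rH(u)^{-1}=\frac{u-S+1}{u-S-1}e_r$. It is linear in $e_r$ and involves only operators in $S$, so applying $T$ preserves it: $H(u)(Te_r)H(u)^{-1}=\frac{u-S+1}{u-S-1}Te_r$. The same argument works for $f$ with $T^{-1}$. One can equally argue directly from \eqref{eq:prelim-hx}: it says $[h_{r+1},x_s]-[h_r,x_{s+1}]\mp(h_rx_s+x_sh_r)=0$, which is linear in the family $(x_s)$ and intertwines with $S$ acting on the $x$-index, so any operator $\sum a_nS^n$ applied to $x$ preserves it.

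\textbf{The $[x,x]$ relation and Serre.} Relation \eqref{eq:prelim-xx} for $i=j$ is bilinear. Write it as $[x_{r+1},x_s]-[x_r,x_{s+1}]=\pm(x_rx_s+x_sx_r)$, i.e. $(S_1-S_2)[x_r,x_s]=\pm\{x_r,x_s\}$ in two-variable notation. Applying $T(S_1)T(S_2)$, which commutes with $S_1,S_2$, preserves the relation. The Serre relations are vacuous in rank one.

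\textbf{The $[e,f]$ relation.} This is the main computation. We need $\Phi([e_r,f_s])=h_{r+s}$. Write $\Phi(e_r)=e_r-z(e_{r+1}+ce_r)$ and $\Phi(f_s)=\sum_n z^n(S+c)^nf_s$. Since $[e_a,f_b]=h_{a+b}$ depends only on $a+b$, the bracket is computed by replacing $S$ acting on either side with the same shift on $h$. Hence $[\Phi(e_r),\Phi(f_s)]=(1-z(S+c))(1-z(S+c))^{-1}h_{r+s}=h_{r+s}$. So $\Phi$ is a well-defined $\mathbb C[[z]]$-algebra endomorphism of $Y[[z]]$; continuity in the $z$-adic topology is automatic since the coefficient of $z^n$ involves finitely many generators.

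\textbf{Inverse.} The formulas \eqref{eq:prelim-Phi-inverse} define an endomorphism by the same argument with $c,z$-operators inverted. The compositions fix all generators, since $T$ and $T^{-1}$ are mutually inverse operators on modes. Hence $\Phi_{c,z}$ is an automorphism.

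\textbf{Integrality.} This is the expected obstacle. For the positive part it suffices to show $\Phi(e_r^{(m)})\in\mathcal Y_R^{>}[[z]]$ and the same for $\Phi^{-1}$. Equality in \eqref{eq:prelim-Phi-integral} then follows, since $\Phi$ is multiplicative and both $\Phi^{\pm1}$ map into $\mathcal Y_R^{>}[[z]]$. Now $\Phi(e_r)=(1-cz)e_r-ze_{r+1}$ is a sum of two terms whose divided powers lie in $\mathcal Y_R^{>}[[z]]$. The issue is the divided power of a sum of noncommuting elements. I would first try to identify $ae_r+be_{r+1}$ with the image under an automorphism, or with a root-vector-like element, whose divided powers are known to be integral; for instance, use the $[x,x]$ relation to see that $e_r$ and $e_{r+1}$ generate a subalgebra where $(e_r+te_{r+1})^{(m)}$ expands via Theorem~\ref{thm:prelim-one-sided-pbw}. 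Failing that, I would use the shuffle realization of \cite{CHH}. There $e_r$ corresponds to the function $x^r$, the product is a symmetrization with factor $\prod\frac{x_i-x_j+1}{x_i-x_j}$, and $e_r^{(m)}$ corresponds to a symmetric polynomial $x_1^r\cdots x_m^r$ (up to normalization). Then $\Phi$ acts on a degree-$m$ shuffle element by multiplying by $\prod_i(1-z(x_i+c))$, a symmetric polynomial with coefficients in $R[z]$. The integral form $\mathcal Y^{>}_R$ is characterized in \cite{CHH} as the $R$-lattice spanned by such normalized symmetric polynomials, and it is stable under multiplication by symmetric polynomials with $R$-coefficients. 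For $\Phi^{-1}$ the factor is $\prod(1-z(x_i+c))^{-1}$, expanded as a power series in $z$ with symmetric polynomial coefficients over $R$, which also preserves the lattice. The negative part follows by applying $\tau$, which intertwines $\Phi_{c,z}$ with $\Phi^{-1}_{c,z}$ on the positive and negative sides.
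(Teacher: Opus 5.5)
Your checks of the defining relations (bilinearity for the $[x,x]$ relation, linearity for the $[h,x]$ relations, Vandermonde and telescoping for $[e,f]$) and of the inverse are the same as the paper's. Your reduction of the negative part to the positive part via $\tau\Phi_{c,z}\tau=\Phi_{c,z}^{-1}$ is correct.

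The difference is in the integrality step. The paper stays inside the defining relations. The case $r=s$ of the rank one relation gives $[x_{r+1},x_r]=\varepsilon x_r^2$, and induction yields
\[
(ax_r+bx_{r+1})^{(n)}=\sum_{j=0}^{n}b^{n-j}\Bigl(\prod_{q=0}^{j-1}(a+\varepsilon qb)\Bigr)x_r^{(j)}x_{r+1}^{(n-j)},
\]
whose coefficients lie in $R[z]$ for $a=1-cz$, $b=-z$. Your first suggestion points at this relation, but as phrased it would not suffice. Theorem~\ref{thm:prelim-one-sided-pbw} supplies a basis of $\mathcal Y_R^{>}$; it does not show that $(e_r+te_{r+1})^{(m)}$ lies in $\mathcal Y_R^{>}$. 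The explicit expansion is what does that.

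Your shuffle fallback is a valid and genuinely different argument. Multiplication by $\prod_i(1-z(x_i+c))$ is an algebra endomorphism of the shuffle algebra, because the factor is symmetric and splits over the two blocks of variables. It agrees with $\Phi_{c,z}$ on the $e_r$, so it represents $\Phi_{c,z}$ under the embedding. The load-bearing input is that the embedding carries the degree $m$ part of $\mathcal Y_R^{>}$ onto $R[x_1,\dots,x_m]^{\mathfrak S_m}$. You attribute this to \cite{CHH}. It also follows from Theorem~\ref{thm:prelim-one-sided-pbw}: the image of an ordered product $\prod e_r^{(a_r)}$ is the monomial symmetric function $m_\lambda$ plus lower degree terms with integral coefficients. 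Integrality of those terms follows from Gauss's lemma after clearing the Vandermonde denominator.

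What your route buys is uniform treatment of the infinite series $(1-z(S+c))^{-1}$, which occurs in $\Phi_{c,z}^{-1}(e_r)$ and in $\Phi_{c,z}(f_r)$. It becomes multiplication by $\prod_i(1-z(x_i+c))^{-1}$, exactly like the finite case. The paper's binomial formula covers only the two term substitution, so preservation under the inverse rests on a further easy observation. For example, $\Phi_{c,z}\equiv\mathrm{id}\bmod z$ on the $z$-adically complete lattice $\mathcal Y_R^{>}[[z]]$, so $\Phi_{c,z}^{-1}=\sum_{k\geq0}(\mathrm{id}-\Phi_{c,z})^k$ preserves it. The paper's route buys self-containedness: it uses one defining relation and needs neither the shuffle embedding nor the identification of the integral lattice inside it.
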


\begin{proof}
We first  show that $\Phi_{c,z}$ defines a $R[[z]]$-algebra homomorphism for $Y^>[[z]]$ and $Y^<[[z]]$. For
$\varepsilon\in\{1,-1\}$, let $x_r=e_r$ if $\varepsilon=1$, and
$x_r=f_r$ if $\varepsilon=-1$. Set
\[
T_{r,s}^{\varepsilon}(x)
=
[x_{r+1},x_s]-[x_r,x_{s+1}]
-\varepsilon(x_rx_s+x_sx_r).
\]
 If $a,b$ are central scalars and $y_r=ax_r+bx_{r+1}$, then 
\[
T_{r,s}^{\varepsilon}(y)
=
a^2T_{r,s}^{\varepsilon}(x)
+ab\bigl(
T_{r+1,s}^{\varepsilon}(x)
+T_{r,s+1}^{\varepsilon}(x)
\bigr)
+b^2T_{r+1,s+1}^{\varepsilon}(x).
\]
Hence the substitution $x_r\mapsto ax_r+bx_{r+1}$ preserves the defining relations \eqref{eq:prelim-xx}.

Taking $r=s$ in \eqref{eq:prelim-xx} gives $[x_{r+1},x_r]=\varepsilon x_r^2$. An induction on $n$ then yields
\begin{equation}\label{eq:prelim-adjacent-binomial}
(a x_r+b x_{r+1})^{(n)}
=
\sum_{j=0}^{n}
b^{n-j}
\left(
\prod_{q=0}^{j-1}(a+\varepsilon q b)
\right)
x_r^{(j)}x_{r+1}^{(n-j)}.
\end{equation}

For the positive part, take $\varepsilon=1$,
$a=1-cz$, and $b=-z$. Every coefficient on the right hand side of
\eqref{eq:prelim-adjacent-binomial} belongs to $R[z]$. Thus
\[
e_r\longmapsto(1-z(S+c))e_r
\]
defines a homomorphism of $Y^{>}[[z]]$, carrying
$\mathcal Y_R^{>}[[z]]$ into itself.  On the generators, its inverse is given by
$(1-z(S+c))^{-1}$, which also preserves $\mathcal Y_R^{>}[[z]]$. Similarly, 
\[
f_r\longmapsto(1-z(S+c))f_r.
\]
defines a homomorphism of $Y^{<}[[z]]$, carrying
$\mathcal Y_R^{<}[[z]]$ into itself, and its  inverse is given by  $f_r\longmapsto(1-z(S+c))^{-1}f_r$, which also preserves $\mathcal Y_R^{<}[[z]]$.

It remains to check the relations involving both positive and
negative generators. Relations \eqref{eq:prelim-h0x} and
\eqref{eq:prelim-hx} are linear in the root generators and remain
valid after applying a power series in $S+c$. Moreover,
\eqref{eq:prelim-ef} and Vandermonde's identity give
\begin{equation}\label{eq:prelim-shifted-mixed}
[(S+c)^a e_r,(S+c)^b f_s]
=
(S+c)^{a+b}h_{r+s}
\qquad(a,b\in\mathbb N).
\end{equation}
Therefore
\[
[\Phi_{c,z}(e_r),\Phi_{c,z}(f_s)]=
(1-z(S+c))(1-z(S+c))^{-1}h_{r+s}=h_{r+s}.
\]
Together with \eqref{eq:prelim-hh}, this proves that
\eqref{eq:prelim-Phi} defines a $R[[z]]$-algebra  homomorphism of $Y[[z]]$.
The formulas in \eqref{eq:prelim-Phi-inverse} give its inverse.
\end{proof}

Set $E(u)=\sum_{r\geq0}e_ru^{-r}$, $F(u)=\sum_{r\geq0}f_{r+1}u^{-r}$.

\begin{corollary}\label{cor:prelim-current-integrality}
For all $a,b,n\in\mathbb N$,
\begin{equation}\label{eq:current-integrality}
[u^{-n}]E(u)^{(a)}
\in\mathcal Y_R^{>},
\qquad
[u^{-n}]F(u)^{(b)}
\in\mathcal Y_R^{<},
\end{equation}
where $[u^{-n}]$ denotes the coefficient of $u^{-n}$
\end{corollary}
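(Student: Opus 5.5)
We will derive the corollary from Proposition~\ref{prop:prelim-formal-automorphisms} by relating the series $E(u)$ and $F(u)$ to the images of single generators under $\Phi_{c,z}^{-1}$ or $\Phi_{c,z}$ with $c=0$, after identifying $z$ with $u^{-1}$.

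For the positive part, apply $\Phi_{0,z}^{-1}$ to $e_0$. By \eqref{eq:prelim-Phi-inverse},
\[
\Phi_{0,z}^{-1}(e_0)=\sum_{n\geq0}z^nS^ne_0=\sum_{n\geq0}e_nz^n ,
\]
which is exactly $E(u)$ with $z=u^{-1}$. Since $\Phi_{0,z}^{-1}$ is an algebra homomorphism and $\mathbb C$-linear, it commutes with taking $a$-th powers and dividing by $a!$. Hence
\[
\Phi_{0,z}^{-1}\bigl(e_0^{(a)}\bigr)=E(z^{-1})^{(a)} .
\]
Now $e_0^{(a)}\in\mathcal Y_R^{>}\subset\mathcal Y_R^{>}[[z]]$, and by \eqref{eq:prelim-Phi-integral} we have $\Phi_{0,z}^{-1}(\mathcal Y_R^{>}[[z]])=\mathcal Y_R^{>}[[z]]$. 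Therefore every $z^n$-coefficient of $E(z^{-1})^{(a)}$ lies in $\mathcal Y_R^{>}$, and this is the first claim.

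For the negative part, the same argument with $\Phi_{0,z}$ applied to $f_1$ gives
\[
\Phi_{0,z}(f_1)=\sum_{n\ge0}z^nf_{n+1}=F(z^{-1})
\]
by \eqref{eq:prelim-Phi}. Since $f_1^{(b)}\in\mathcal Y_R^{<}$, the second claim follows from the $\mathcal Y_R^{<}$ part of \eqref{eq:prelim-Phi-integral}.

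The only point needing care is that the identification $z\leftrightarrow u^{-1}$ is legitimate: both sides are formal power series in a single variable, and the map $\sum a_nz^n\mapsto\sum a_nu^{-n}$ is a ring isomorphism. It therefore respects powers and the divided power operation, so the coefficient extraction $[u^{-n}]$ corresponds to taking the $z^n$-coefficient. No real obstacle is expected, since all the integrality content is already contained in the Proposition.
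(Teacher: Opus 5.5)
Your proposal is correct and is essentially the paper's own proof. The paper likewise notes that $\Phi_{0,u^{-1}}^{-1}(e_0)=E(u)$ and $\Phi_{0,u^{-1}}(f_1)=F(u)$, so $E(u)^{(a)}=\Phi_{0,u^{-1}}^{-1}\bigl(e_0^{(a)}\bigr)\in\mathcal Y_R^{>}[[u^{-1}]]$ and $F(u)^{(b)}=\Phi_{0,u^{-1}}\bigl(f_1^{(b)}\bigr)\in\mathcal Y_R^{<}[[u^{-1}]]$ by \eqref{eq:prelim-Phi-integral}, and then takes coefficients.
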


\begin{proof}
By \eqref{eq:prelim-Phi} and \eqref{eq:prelim-Phi-inverse},
\[
\Phi_{0,u^{-1}}^{-1}(e_0)=E(u),
\qquad
\Phi_{0,u^{-1}}(f_1)=F(u).
\]
Hence \eqref{eq:prelim-Phi-integral} gives
\[
E(u)^{(a)}
=
\Phi_{0,u^{-1}}^{-1}\bigl(e_0^{(a)}\bigr)
\in\mathcal Y_R^{>}[[u^{-1}]],\qquad F(u)^{(b)}
=
\Phi_{0,u^{-1}}\bigl(f_1^{(b)}\bigr)
\in\mathcal Y_R^{<}[[u^{-1}]].
\]
Taking coefficients proves \eqref{eq:current-integrality}.
\end{proof}

\section{The integral form in rank one}
\label{sec:rank-one}

Throughout this section, let $Y=Y(\mathfrak{sl}_2)$, and use the notation introduced in Section~\ref{sec:preliminaries}.
Our goal is to identify the $R$-subalgebra $\mathcal{Y}_R$ generated by the positive
and negative divided powers. We first introduce the triangular span
built from the positive, negative, and Cartan parts,  and establish the
commutation relations needed to work with it. We then prove an
analogue of the basic lemma of Chari--Pressley \cite[Lemma~5.1]{CP}.
Starting from the resulting formula for $e_0^{(a)}f_1^{(b)}$, we
extend the argument to products $e_r^{(a)}f_s^{(b)}$ for arbitrary
$r,s\in\mathbb N$, and finally obtain the PBW theorem for $\mathcal{Y}_R$.

\subsection{Commutation relations with Cartan generators}
\label{subsec:rank-triangular-span}

Let
\[
M=\mathcal Y_R^{<}\mathcal H_R\mathcal Y_R^{>}
\subset Y
\]
denote the $R$-linear span of all products $x^-hx^+$ with
$x^-\in\mathcal Y_R^{<}$, $h\in\mathcal H_R$, and
$x^+\in\mathcal Y_R^{>}$. By
Theorem~\ref{thm:prelim-one-sided-pbw},
\eqref{eq:prelim-rational-triangular}, and
\eqref{eq:prelim-cartan-basis}, the products
\begin{equation}\label{eq:rank-basis}
\left(\prod_{r\geq0}^{\leftarrow}f_r^{(a_r)}\right)
\binom{h_0}{b}
\left(\prod_{m\geq1}P_m^{c_m}\right)
\left(\prod_{r\geq0}^{\rightarrow}e_r^{(d_r)}\right)
\end{equation}
form an $R$-basis of $M$, where all exponent families have finite
support.

\begin{lemma}\label{lem:rank-Q-conjugation}
For every $r\geq0$,
\begin{equation}\label{eq:rank-Q-conjugation}
\begin{aligned}
Q(u)e_rQ(u)^{-1}
&=\frac{(u-S)(u-S-1)}{u^2}e_r,\\
Q(u)^{-1}f_rQ(u)
&=\frac{(u-S)(u-S-1)}{u^2}f_r.
\end{aligned}
\end{equation}
\end{lemma}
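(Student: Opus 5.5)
The plan is to derive the conjugation formula for $Q(u)$ from the known one for $H(u)$ in \eqref{eq:prelim-H-conjugation}, using the defining difference equation \eqref{eq:rank-difference}.

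\textbf{Defining the candidate.} Write $\operatorname{Ad}_{Q(u)}$ for conjugation $x\mapsto Q(u)xQ(u)^{-1}$. Since $Y^0$ is commutative, $\operatorname{Ad}_{Q(u)}$ acts on the span of the $e_r$. Because all operators involved commute with $S$, it acts there as multiplication by some power series $\phi(u,S)\in 1+u^{-1}\mathbb C[S][[u^{-1}]]$. I will show $\phi(u,S)=\psi(u,S)$, where
\[
\psi(u,S)=\frac{(u-S)(u-S-1)}{u^2}.
\]

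\textbf{Step 1 (setting up the functional equation).} Rewrite \eqref{eq:rank-difference} as $Q(u+1)=(1+u^{-1})^{-h_0}H(u)Q(u)$ and conjugate $e_r$ by both sides. By \eqref{eq:prelim-h0x}, $h_0e_r=e_r(h_0+2)$, so
\[
(1+u^{-1})^{-h_0}\,e_r\,(1+u^{-1})^{h_0}=(1+u^{-1})^{-2}e_r=\frac{u^2}{(u+1)^2}e_r .
\]
Combining this with \eqref{eq:prelim-H-conjugation}, and noting that all operators are power series in $S$ acting on the span of the $e_r$ and hence commute, the multiplier must satisfy
\[
\phi(u+1,S)=\frac{u^2}{(u+1)^2}\cdot\frac{u-S+1}{u-S-1}\cdot\phi(u,S).
\]

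\textbf{Step 2 (checking the candidate).} A direct computation shows $\psi$ satisfies the same equation:
\[
\frac{\psi(u+1)}{\psi(u)}=\frac{(u+1-S)(u-S)\,u^2}{(u+1)^2\,(u-S)(u-S-1)}=\frac{u^2}{(u+1)^2}\cdot\frac{u-S+1}{u-S-1}.
\]
So $\psi$ has the correct ratio, and like $\phi$ it has constant term $1$.

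\textbf{Step 3 (uniqueness).} Let $\rho=\phi/\psi$. Then $\rho\in 1+u^{-1}\mathbb C[S][[u^{-1}]]$ and $\rho(u+1)=\rho(u)$. Suppose $\rho\neq 1$ and let $c\,u^{-k}$ ($k\geq1$, $c\neq0$) be its lowest nonconstant term. Expanding $(u+1)^{-k}=u^{-k}-k\,u^{-k-1}+\cdots$, the difference $\rho(u+1)-\rho(u)$ has $u^{-k-1}$ coefficient $-kc\neq0$, a contradiction. Hence $\rho=1$ and $\phi=\psi$, which is the first formula.

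\textbf{Second formula.} Run the same argument for $Q(u)^{-1}f_rQ(u)$. Now $h_0f_r=f_r(h_0-2)$, and the $H$-factor from \eqref{eq:prelim-H-conjugation} is inverted, so the multiplier gets the same ratio and the identical recursion results. Alternatively, apply the anti-automorphism $\tau$ to the first formula: $\tau$ fixes $Q(u)$ (it fixes the $h_r$) and commutes with $S$.

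\textbf{Main obstacle.} The delicate step is justifying the formal manipulations: $S$ is not an element of $Y$ but acts on the span of the $e_r$. One must check that conjugation by $Q(u)$ is given by a well-defined operator series in $S$ commuting with the others. I would handle this by noting that the $H(u)$-conjugation and the $h_0$-conjugation are such operators. Then, by induction on the $u$-degree using \eqref{eq:rank-difference}, the coefficients $P_m$ act on the span of the $e_r$ by polynomials in $S$ and $h$-multiplication, so everything lives in a commutative setting.
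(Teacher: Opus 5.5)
Your Steps 1 and 2 are exactly the verification the paper carries out: $K(u,S)e_r$ satisfies the shifted equation with $G(u)=(1+u^{-1})^{-h_0}H(u)$. The gap is in your opening claim, on which the rest depends. You assert that $\operatorname{Ad}_{Q(u)}$ preserves $\operatorname{span}\{e_s\}[[u^{-1}]]$ and acts there by some multiplier $\phi(u,S)\in 1+u^{-1}\mathbb C[S][[u^{-1}]]$, but commutativity of $Y^0$ does not give this. Individual Cartan elements do not preserve the span of the $e_s$: \eqref{eq:prelim-hx} gives $[h_1,e_r]=2e_{r+1}+2e_r+2e_rh_0$. For $P_1=\binom{h_0}{2}-h_1$ one finds $[P_1,e_r]=-e_r-2e_{r+1}$ only because the $e_rh_0$ terms cancel. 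So the fact that $\operatorname{Ad}_{Q(u)}$ acts by a scalar series in $S$ is essentially the content of the lemma. Step 1 presupposes that $\phi$ exists, and Step 3 uses commutativity of the ring containing $\phi$. Your closing paragraph flags this, but the remedy is only a sketch. The $P_m$ do not act on $\operatorname{span}\{e_s\}$; the relevant invariant space is $\bigoplus_s e_sY^0$, and the multiplier must a priori be allowed coefficients in $Y^0$, not $\mathbb C$.

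The missing idea is that uniqueness can be proved for the noncommutative equation directly, with no a priori structure. The paper takes $X(u)=Q(u)e_rQ(u)^{-1}$, which satisfies $X(u+1)=G(u)X(u)G(u)^{-1}$ and $X(u)=e_r+O(u^{-1})$, and proves uniqueness of such solutions in $Y[[u^{-1}]]$. Suppose the difference $D$ of two solutions has leading term $xu^{-m}$. The shift $u\mapsto u+1$ changes $D$ by $-mxu^{-m-1}+O(u^{-m-2})$. On the other hand, $G(u)DG(u)^{-1}-D=[G(u)-1,D]\,G(u)^{-1}=O(u^{-m-2})$, because $G(u)=1+O(u^{-2})$: the $-h_0u^{-1}$ term of $(1+u^{-1})^{-h_0}$ cancels the $+h_0u^{-1}$ term of $H(u)$. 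Hence $x=0$. Applying this to $Q(u)e_rQ(u)^{-1}$ and $K(u,S)e_r$ finishes the proof.

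Alternatively, you can rescue your multiplier approach. Work on the free right $Y^0$-module $\bigoplus_s e_sY^0$, and show by induction on $m$ from \eqref{eq:prelim-hx} that left multiplication by $h_m$ preserves it and commutes with $S$. Then $\operatorname{Ad}_{Q(u)}$ acts as an element of the commutative ring $Y^0[S][[u^{-1}]]$ with constant term $1$, and your Steps 1--3 go through essentially unchanged in that ring. Your treatment of the second formula via $\tau$ is fine.
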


\begin{proof}
Let  $G(u)=(1+u^{-1})^{-h_0}H(u)=1+O(u^{-2})$. The defining equation for $Q(u)$ gives $Q(u+1)=G(u)Q(u)$.
Consequently, $X(u)=Q(u)e_rQ(u)^{-1}$
satisfies
\begin{equation}\label{eq:rank-conjugation-difference}
X(u+1)=G(u)X(u)G(u)^{-1},
\qquad
X(u)=e_r+O(u^{-1}).
\end{equation}
This equation has at most one solution with the stated constant term.
Indeed, suppose that the difference of two solutions begins with
$xu^{-m}$, where $m\geq1$ and $x\ne0$. Its change under
$u\mapsto u+1$ begins with $-mxu^{-m-1}$, whereas conjugation by
$G(u)=1+O(u^{-2})$ changes it only in order $u^{-m-2}$, a
contradiction. Set
\[
K(u,S)=\frac{(u-S)(u-S-1)}{u^2},
\qquad
\widetilde X(u)=K(u,S)e_r.
\]
Explicitly, $\widetilde X(u)
=e_r-(e_r+2e_{r+1})u^{-1}
 +(e_{r+1}+e_{r+2})u^{-2}$. Since $[h_0,e_j]=2e_j$ for all $j\geq0$, we have
$[h_0,\widetilde X(u)]=2\widetilde X(u)$. If
$[h_0,x]=2x$, then
\[
p(h_0)x=xp(h_0+2)
\]
for every polynomial $p$. Applying this coefficientwise to $(1+u^{-1})^{-h_0}$ gives
\[
(1+u^{-1})^{-h_0}x(1+u^{-1})^{h_0}
=(1+u^{-1})^{-2}x.
\]
On the other hand, \eqref{eq:prelim-H-conjugation} gives $H(u)\widetilde X(u)H(u)^{-1}
=K(u,S)\frac{u-S+1}{u-S-1}e_r$. It follows that
\[
\begin{aligned}
G(u)\widetilde X(u)G(u)^{-1}
&=(1+u^{-1})^{-2}
K(u,S)\frac{u-S+1}{u-S-1}e_r\\
&=K(u+1,S)e_r
=\widetilde X(u+1).
\end{aligned}
\]
Thus $\widetilde X(u)$ satisfies
\eqref{eq:rank-conjugation-difference}, and uniqueness gives the
first identity in \eqref{eq:rank-Q-conjugation}. Applying the
anti-automorphism $\tau$ gives the second identity.
\end{proof}

\begin{proposition}\label{prop:rank-cartan-exchange}
One has
\[
\mathcal H_R\mathcal Y_R^{>}
=\mathcal Y_R^{>}\mathcal H_R,
\qquad
\mathcal H_R\mathcal Y_R^{<}
=\mathcal Y_R^{<}\mathcal H_R.
\]
\end{proposition}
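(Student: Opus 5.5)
The plan is to handle the two kinds of generators of $\mathcal H_R$ separately: the binomials $\binom{h_0}{b}$ by a weight argument, and the $P_m$ by recognizing conjugation by $Q(u)$ as a composite of the automorphisms of Proposition~\ref{prop:prelim-formal-automorphisms}. The negative case will then follow from the positive one by applying $\tau$.

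\emph{Binomials.} Every monomial in the generators $e_r^{(m)}$ of $\mathcal Y_R^{>}$ is an $h_0$-weight vector. Hence $\mathcal Y_R^{>}$ is spanned by elements $x$ with $[h_0,x]=2ax$ for some $a\in\mathbb N$. For such $x$ we have $p(h_0)x=xp(h_0+2a)$ for every polynomial $p$, and so
\[
\binom{h_0}{b}x=x\binom{h_0+2a}{b}=\sum_{j=0}^{b}\binom{2a}{b-j}\,x\binom{h_0}{j},
\]
using \eqref{eq:prelim-binomial-translation}. Running the same computation with $h_0-2a$ in place of $h_0+2a$ gives the reverse exchange. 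Thus $\Bin_R(h_0)$ commutes past $\mathcal Y_R^{>}$ in both directions.

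\emph{The $P_m$.} Put $z=u^{-1}$. The first identity of Lemma~\ref{lem:rank-Q-conjugation} reads
\[
Q(u)e_rQ(u)^{-1}=(1-zS)(1-z(S+1))e_r .
\]
On generators, the composite $\Phi_{0,z}\circ\Phi_{1,z}$ sends $e_r$ to exactly this element. Indeed, $\Phi_{1,z}(e_r)=(1-z(S+1))e_r$, and $\Phi_{0,z}$ acts on $\mathbb C[[z]]$-combinations of the $e_j$ through the operator $1-zS$, which commutes with $S$. Now $x\mapsto Q(u)xQ(u)^{-1}$ is a $\mathbb C[[z]]$-algebra automorphism of $Y^{>}[[z]]$, since the coefficients of $Q(u)^{\pm1}$ lie in $Y^0$. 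Two algebra maps agreeing on generators agree everywhere, so
\[
Q(u)\,x\,Q(u)^{-1}=\Psi(x),\qquad \Psi:=\Phi_{0,z}\Phi_{1,z},\qquad x\in Y^{>}.
\]
By \eqref{eq:prelim-Phi-integral}, $\Psi$ and $\Psi^{-1}$ preserve $\mathcal Y_R^{>}[[z]]$. Writing $\Psi(x)=\sum_j\Psi_j(x)z^j$ with $\Psi_j(x)\in\mathcal Y_R^{>}$ and comparing coefficients of $z^m$ in $Q(u)x=\Psi(x)Q(u)$ gives
\[
P_mx=\sum_{j=0}^m\Psi_j(x)P_{m-j}\in\mathcal Y_R^{>}\mathcal H_R .
\]
Likewise $xQ(u)=Q(u)\Psi^{-1}(x)$ gives $xP_m\in\mathcal H_R\mathcal Y_R^{>}$.

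\emph{Conclusion.} An induction on the number of factors in a monomial $\binom{h_0}{b}\prod P_m^{c_m}$ now gives $\mathcal H_R\mathcal Y_R^{>}\subseteq\mathcal Y_R^{>}\mathcal H_R$ and the reverse inclusion. For the negative part, apply $\tau$. It fixes each $h_r$, hence fixes $Q(u)$, each $P_m$ and all of $\mathcal H_R$; it maps $\mathcal Y_R^{>}$ onto $\mathcal Y_R^{<}$ and reverses products. Alternatively, one can use the second identity of Lemma~\ref{lem:rank-Q-conjugation} together with the negative half of \eqref{eq:prelim-Phi-integral}.

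The step needing the most care is the identification of conjugation by $Q(u)$ with $\Phi_{0,z}\Phi_{1,z}$ on all of $Y^{>}[[z]]$, including the divided powers $e_r^{(m)}$. Lemma~\ref{lem:rank-Q-conjugation} only describes conjugation on the generators $e_r$. One must also check that the $u^{-1}$-expansion on the left matches the $z$-power series on the right, i.e.\ that $\Psi$ is well defined and $z$-adically continuous, so that agreement on generators propagates to the whole integral form.
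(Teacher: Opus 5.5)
Your proposal is correct and follows the paper's proof. The binomials are handled by weight shifts plus Vandermonde. For the $P_m$, the paper factors $K(u,S)=(1-zS)(1-z(S+1))$ so that conjugation by $Q(u)^{\pm1}$ acts through $\Phi_{0,z}\Phi_{1,z}$ (or its inverse), which preserves $\mathcal Y_R^{>}[[z]]$, and then compares coefficients of $z^m$ exactly as you do. Using $\tau$ for the negative half instead of repeating the argument is a cosmetic difference, and the continuity issue you flag is routine, since both maps are defined coefficientwise in $z$.
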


\begin{proof}
For the binomial generators, the weight relations give
\[
\binom{h_0}{b}e_r^{(a)}
=e_r^{(a)}\binom{h_0+2a}{b},
\qquad
\binom{h_0}{b}f_r^{(a)}
=f_r^{(a)}\binom{h_0-2a}{b}.
\]
The translated binomials belong to $\mathcal H_R$ by
\eqref{eq:prelim-binomial-translation}. We next consider the generators $P_m$. The operator in
\eqref{eq:rank-Q-conjugation} factors as
\[
K(u,S)
=\bigl(1-u^{-1}S\bigr)
 \bigl(1-u^{-1}(S+1)\bigr).
\]
After setting $z=u^{-1}$, each factor is of the form
$1-z(S+c)$ with $c\in\{0,1\}$. Proposition~\ref{prop:prelim-formal-automorphisms}
therefore shows that both $K(u,S)$ and $K(u,S)^{-1}$ preserve
$\mathcal Y_R^{>}[[u^{-1}]]$ and
$\mathcal Y_R^{<}[[u^{-1}]]$. Since conjugation is an algebra
automorphism, Lemma~\ref{lem:rank-Q-conjugation} implies that
conjugation by $Q(u)$ and by $Q(u)^{-1}$ preserves $\mathcal Y_R^{>}[[u^{-1}]]$ and
$\mathcal Y_R^{<}[[u^{-1}]]$. For $x\in\mathcal Y_R^{>}$, write
\[
Q(u)xQ(u)^{-1}=\sum_{i\geq0}x_i u^{-i},
\qquad
Q(u)^{-1}xQ(u)=\sum_{i\geq0}y_i u^{-i},
\]
where $x_i,y_i\in\mathcal Y_R^{>}$ and $x_0=y_0=x$. Then
\[
Q(u)x=\left(\sum_{i\geq0}x_i u^{-i}\right)Q(u),
\qquad
xQ(u)=Q(u)\left(\sum_{i\geq0}y_i u^{-i}\right).
\]
Comparing the coefficients of $u^{-m}$ gives
\[
P_mx=\sum_{i=0}^m x_iP_{m-i},
\qquad
xP_m=\sum_{i=0}^mP_{m-i}y_i.
\]
Hence
$\mathcal H_R\mathcal Y_R^{>}
=\mathcal Y_R^{>}\mathcal H_R$.
The same argument  gives
$\mathcal H_R\mathcal Y_R^{<}
=\mathcal Y_R^{<}\mathcal H_R$.
\end{proof}

It follows that
\begin{equation}\label{eq:rank-M-stability}
\mathcal Y_R^{<}M\subseteq M,
\qquad
M\mathcal Y_R^{>}\subseteq M,
\qquad
\mathcal H_RM\subseteq M,
\qquad
M\mathcal H_R\subseteq M.
\end{equation}
We shall use these inclusions before proving that $M$ is an algebra.

The automorphisms of Section~\ref{subsec:prelim-automorphisms} also
preserve $M$:
\begin{equation}\label{eq:rank-Phi-M}
\Phi_{c,z}(M[[z]])=M[[z]]
\qquad(c\in R).
\end{equation}
Indeed, $\Phi_{c,z}$ fixes $\mathcal H_R$ pointwise and preserves
$\mathcal Y_R^{>}[[z]]$ and $\mathcal Y_R^{<}[[z]]$. Thus each
coefficient in the image of a product $x^-hx^+$ lies in $M$. This
gives one inclusion in \eqref{eq:rank-Phi-M}, and the inverse
automorphism gives the other.

\subsection{The basic lemma}
\label{subsec:rank-basic}

The following formula for $e_0^{(a)}f_1^{(b)}$ is the Yangian
analogue of the basic lemmas of Garland \cite[Lemma~ 7.5]{Gar}  and Chari--Pressley
\cite[Lemma~5.1]{CP}.

\begin{theorem}\label{thm:rank-basic}
For all $a,b\in\mathbb N$,
\begin{equation}\label{eq:rank-basic}
e_0^{(a)}f_1^{(b)}
=
\sum_{t=0}^{\min(a,b)}
(-1)^t[u^{-t}]
\left(
F(u)^{(b-t)}
\frac{Q(u)}{B_{h_0-a-b+t+1}(u)}
E(u)^{(a-t)}
\right).
\end{equation}
Every summand on the right hand side belongs to $M$.
\end{theorem}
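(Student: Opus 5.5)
The plan is to argue by induction on $a+b$, treating \eqref{eq:rank-basic} as an identity of formal series in $u^{-1}$ whose $u^0$-coefficient we extract. The membership claim is settled first, since it is essentially formal. By Corollary~\ref{cor:prelim-current-integrality}, every coefficient of $F(u)^{(b-t)}$ lies in $\mathcal Y_R^{<}$ and every coefficient of $E(u)^{(a-t)}$ lies in $\mathcal Y_R^{>}$. The coefficients of $Q(u)$ are the $P_m\in\mathcal H_R$. The coefficients of $B_{h_0-a-b+t+1}(u)^{-1}$ are $\gamma_m(h_0-a-b+t+1)$, and these lie in $\Bin_R(h_0)\subset\mathcal H_R$ by \eqref{eq:rank-gamma-binomial} together with \eqref{eq:prelim-binomial-translation}. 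Since $\mathcal H_R$ is commutative, the product $Q(u)B_{h_0-a-b+t+1}(u)^{-1}$ has coefficients in $\mathcal H_R$. Hence each $[u^{-t}]$-coefficient is a finite sum of products $x^-hx^+$ and lies in $M$.

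For the identity itself, the base cases $a=0$ or $b=0$ are trivial: only $t=0$ contributes, and the $u^0$-coefficients of $F$, $Q$, $B^{-1}$ and $E$ are $f_1$, $1$, $1$ and $e_0$ respectively. For the inductive step, write $e_0^{(a)}f_1^{(b)}=\frac1a e_0\,e_0^{(a-1)}f_1^{(b)}$ and apply the induction hypothesis to $e_0^{(a-1)}f_1^{(b)}$. We then have to move $e_0$ to the right past three kinds of factors.

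\emph{Past $F(u)^{(b-t)}$.} Relation \eqref{eq:prelim-ef} gives $[e_0,F(u)]=\sum_r h_{r+1}u^{-r}=u(H(u)-1)-h_0$. Since $[h_r,f_s]$ lies in the span of $f$'s, we obtain $[e_0,F(u)^{(k)}]=F(u)^{(k-1)}\cdot(\text{Cartan series})+(\text{lower terms involving }F(u)^{(k-2)})$. It is cleaner to use the generating form: $H(u)$ conjugates $F(u)$ by \eqref{eq:prelim-H-conjugation}, and $F(u)=\Phi_{0,u^{-1}}(f_1)$ satisfies $(1-u^{-1}S)F(u)=f_1$.

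\emph{Past $Q(u)B^{-1}$.} Here we use Lemma~\ref{lem:rank-Q-conjugation} and the weight shift $h_0\mapsto h_0+2$, which changes the index of $B$ by $2$. Formulas \eqref{eq:rank-B-shifts-1} and \eqref{eq:rank-B-shifts-2} are exactly what reconcile these shifts.

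\emph{Combining.} The commutator term produces $H(u)Q(u)=(1+u^{-1})^{h_0}Q(u+1)$, and the shift $u\mapsto u+1$ has to be absorbed by $B_x(u+1)$ via \eqref{eq:rank-B-shifts-2}; this is the source of the specific index $h_0-a-b+t+1$. Finally, $e_0E(u)^{(a-1-t)}$ is rewritten in terms of $E(u)^{(a-t)}$ using $E(u)=e_0+u^{-1}(\ldots)$, more precisely $e_0=(1-u^{-1}S)E(u)$, together with the $u$-degree bookkeeping that turns $[u^{-t}]$ into $[u^{-t-1}]$. After collecting terms, the coefficient identity should reduce to the scalar recurrence $\beta_m(x+1)-\beta_m(x)=x\beta_{m-1}(x)$, i.e.\ to \eqref{eq:rank-B-shifts-1}.

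The main obstacle is this commutation bookkeeping. Moving $e_0$ past $F(u)^{(b-t)}$ in series form produces terms in which the argument of the Cartan series is effectively shifted, so one must verify that all the $u$-shifts and $h_0$-shifts telescope into the single correction factor $B_{h_0-a-b+t+1}(u)^{-1}$. My first attempt would be the case $a=1$, proving
\[
e_0f_1^{(b)}=f_1^{(b)}e_0-[u^{-1}]\Bigl(F(u)^{(b-1)}Q(u)B_{h_0-b+1}(u)^{-1}\Bigr)
\]
directly, by verifying that both sides satisfy the same difference equation in $u$, in the spirit of the uniqueness argument of Lemma~\ref{lem:rank-Q-conjugation}. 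The general case would then follow by iterating this formula, using the formal automorphisms $\Phi_{c,z}$ to move the resulting series factors into normal order.
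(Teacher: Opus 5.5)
There is a genuine gap. Your argument for the membership claim and for the base cases is correct and matches the paper. Your inductive framework is also the paper's: left multiplication by $e_0$, i.e.\ showing that the right hand side $Z_{a,b}$ of \eqref{eq:rank-basic} satisfies $e_0Z_{a-1,b}=aZ_{a,b}$. But the identity itself is not proved. The whole inductive step is deferred to bookkeeping that ``should reduce to'' the $\beta$-recurrence, and the shapes you sketch for it are not right.

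First, $[e_0,F(u)^{(k)}]$ is not $F(u)^{(k-1)}$ times a Cartan series plus terms in $F(u)^{(k-2)}$. Also, $u(H(u)-1)-h_0$ equals $[e_0,\varphi(u)]$ with $\varphi(u)=u^{-1}F(u)$, not $[e_0,F(u)]$. The correct formula involves a shifted series:
\[
[e_0,\varphi(u)^{(k)}]=u\,\varphi(u+1)^{(k-1)}H(u)-\varphi(u)^{(k-1)}(u+h_0-k+1).
\]
Proving it needs $H(u)\varphi(u)H(u)^{-1}=2\varphi(u+1)-\varphi(u)$. It also needs the identity $\sum_{i=0}^{k-1}x^i(2y-x)^{k-1-i}=ky^{k-1}$ for $x=\varphi(u)$, $y=\varphi(u+1)$, which comes from the quadratic relation $[y-x,y]=(y-x)^2$ (take $v=u+1$ in \eqref{eq:rank-normalized-current-relations}).

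Second, $e_0$ is not absorbed into $E(u)^{(l)}$ via $e_0=(1-u^{-1}S)E(u)$. Instead, Lemma~\ref{lem:rank-Q-conjugation} turns $e_0Q(u)$ into $Q(u)u^2\eta(u)$ with $\eta(u)=\varepsilon(u-1)-\varepsilon(u)$. The relation $[\eta(u),\varepsilon(u)]=\eta(u)^2$ then gives $\eta(u)\varepsilon(u)^{(l)}=\varepsilon(u-1)^{(l+1)}-\varepsilon(u)^{(l+1)}$. Hence $e_0\mathcal A_c(u)\varepsilon(u)^{(l)}=\mathcal A_{c+2}(u)\bigl(\varepsilon(u-1)^{(l+1)}-\varepsilon(u)^{(l+1)}\bigr)$, where $\mathcal A_c(u)=u^cQ(u)/B_{h_0-c}(u)$.

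The decisive missing idea is how the shifted arguments $\varphi(u+1)$ and $\varepsilon(u-1)$ are eliminated. Working with $[u^{-t}]$-coefficients for varying $t$ hides this. The paper writes the $t$-th summand as a residue $\mathcal C_{k,c,l}=[u^{-1}]\bigl(\varphi(u)^{(k)}\mathcal A_c(u)\varepsilon(u)^{(l)}\bigr)$ with $(k,c,l)=(b-t,a+b-t-1,a-t)$, and uses $[u^{-1}]T(u+1)=[u^{-1}]T(u)$. The two $B$-shift formulas enter as follows:
\begin{itemize}
\item \eqref{eq:rank-B-shifts-2} gives $\mathcal A_{c+1}(u+1)=uH(u)\mathcal A_c(u)$, which absorbs the $H(u)$ from the commutator;
\item \eqref{eq:rank-B-shifts-1} gives $\mathcal A_{c+1}(u)=(u+h_0-c-1)\mathcal A_c(u)$.
\end{itemize}
The four terms of $e_0\mathcal C_{k,c,l}$ then telescope between consecutive values of $t$. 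The shifted commutator term at $t$ cancels the $\varepsilon(u-1)$-term at $t+1$, and the two unshifted terms combine to $-(a+1)\mathcal C_{k-1,c,l}$. Together with the boundary terms at $t=0$ and $t=\min(a,b)$, this gives $e_0Z_{a,b}=(a+1)Z_{a+1,b}$.

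Your fallback does not replace this. The $a=1$ identity has no $u$-dependence, so there is no difference equation in $u$ to compare. Iterating it requires commuting $e_0$ past the coefficients of $F(u)^{(k)}$ and recombining with $E(u)^{(l)}$, not commuting past $f_1^{(b)}$. The automorphism $\Phi_{0,u^{-1}}$ does not transport the $a=1$ formula to such products, because it sends $f_1\mapsto F(u)$ but also $e_0\mapsto e_0-u^{-1}e_1$.
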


For $c\in\mathbb Z$, set $\mathcal A_c(u)
=u^c\frac{Q(u)}{B_{h_0-c}(u)}$. By \eqref{eq:inverseBx},
\begin{equation}\label{eq:rank-Ac-triangular}
u^{-c}\mathcal A_c(u)
=Q(u)B_{h_0-c}(u)^{-1}
=\sum_{m\geq0}
\left(
\sum_{j=0}^mP_{m-j}\gamma_j(h_0-c)
\right)u^{-m}
\in\mathcal H_R[[u^{-1}]].
\end{equation}

\begin{lemma}\label{lem:rank-Ac-shifts}
For every $c\in\mathbb Z$,
\begin{align}
\mathcal A_{c+1}(u)
&=(u+h_0-c-1)\mathcal A_c(u),\label{eq:rank-Ac-shifts-1}\\
\mathcal A_{c+1}(u+1)
&=uH(u)\mathcal A_c(u).\label{eq:rank-Ac-shifts-2}
\end{align}
\end{lemma}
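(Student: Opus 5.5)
Both identities are formal consequences of the two shift formulas for $B_x(u)$ in Lemma~\ref{lem:rank-B-properties}, together with the difference equation \eqref{eq:rank-difference}. Since all coefficients of $Q(u)$, $H(u)$ and $B_x(u)$ lie in the commutative algebra $Y^0$, I can substitute $x=h_0-c$ into polynomial identities in $x$ coefficientwise. Here $\mathcal A_c(u)=u^cQ(u)B_{h_0-c}(u)^{-1}$ is a Laurent series in $u^{-1}$ with coefficients in $\mathcal H_R$ (by \eqref{eq:rank-Ac-triangular}). Every manipulation below is a multiplication of such series, so I have to check that each step makes sense in $Y^0((u^{-1}))$.

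For \eqref{eq:rank-Ac-shifts-1}, I apply \eqref{eq:rank-B-shifts-1} with $x=h_0-c-1$:
\[
B_{h_0-c}(u)=\Bigl(1+\frac{h_0-c-1}{u}\Bigr)B_{h_0-c-1}(u).
\]
Inverting gives $B_{h_0-c-1}(u)^{-1}=\bigl(1+\frac{h_0-c-1}{u}\bigr)B_{h_0-c}(u)^{-1}$. Multiplying by $u^{c+1}Q(u)$, I get
\[
\mathcal A_{c+1}(u)=u^{c+1}\Bigl(1+\frac{h_0-c-1}{u}\Bigr)Q(u)B_{h_0-c}(u)^{-1}=(u+h_0-c-1)\mathcal A_c(u),
\]
as required.

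For \eqref{eq:rank-Ac-shifts-2}, I substitute $u\mapsto u+1$. This is legitimate because $(u+1)^{-m}$ expands in $u^{-1}$. Then
\[
\mathcal A_{c+1}(u+1)=(u+1)^{c+1}Q(u+1)B_{h_0-c-1}(u+1)^{-1}.
\]
I rewrite each factor in turn.

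\begin{itemize}
\item[(i)] From \eqref{eq:rank-difference}, $Q(u+1)=(1+u^{-1})^{-h_0}H(u)Q(u)$.
\item[(ii)] From \eqref{eq:rank-B-shifts-2} with $x=h_0-c-1$,
\[
B_{h_0-c-1}(u+1)^{-1}=(1+u^{-1})^{h_0-c-1}\Bigl(1+\frac{h_0-c-1}{u}\Bigr)^{-1}B_{h_0-c-1}(u)^{-1}.
\]
\item[(iii)] By the computation in the previous paragraph, $\bigl(1+\frac{h_0-c-1}{u}\bigr)^{-1}B_{h_0-c-1}(u)^{-1}=B_{h_0-c}(u)^{-1}$.
\end{itemize}

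Collecting the powers of $(1+u^{-1})$: the binomial series are exponentials of $x\log(1+u^{-1})$ in the commutative ring $Y^0$, so the exponents add and $(1+u^{-1})^{-h_0}(1+u^{-1})^{h_0-c-1}=(1+u^{-1})^{-c-1}$. Together with $(u+1)^{c+1}=u^{c+1}(1+u^{-1})^{c+1}$, the total scalar factor reduces to $u^{c+1}$. This leaves
\[
\mathcal A_{c+1}(u+1)=u^{c+1}H(u)Q(u)B_{h_0-c}(u)^{-1}=uH(u)\mathcal A_c(u).
\]

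The only delicate point is justifying manipulations like $(1+u^{-1})^{x}(1+u^{-1})^{y}=(1+u^{-1})^{x+y}$ and the substitution $u\mapsto u+1$ for series whose coefficients involve the non-scalar $h_0$. I will handle this by noting that for each power of $u^{-1}$ the coefficients are polynomial identities in $x$ that hold for all $x\in\mathbb N$, hence identically. This is the same argument the paper used in Lemma~\ref{lem:rank-B-properties}. The identities can then be specialized to $x=h_0-c$ in the commutative algebra $Y^0$, which also contains $H(u)$ and $Q(u)$.
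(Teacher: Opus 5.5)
Your proof is correct and follows essentially the same route as the paper. With $d=h_0-c-1$, the paper uses \eqref{eq:rank-B-shifts-1} to write $B_{h_0-c}(u)=(1+d/u)B_d(u)$ for the first identity, and for the second it combines \eqref{eq:rank-difference} with \eqref{eq:rank-B-shifts-2}, so that $(1+u^{-1})^{-h_0}(1+u^{-1})^{d}=(1+u^{-1})^{-c-1}$ cancels against $(u+1)^{c+1}$. Your extra justification is a reasonable addition that the paper leaves implicit: you specialize polynomial identities in $x$ coefficientwise to $x=h_0-c$ inside the commutative algebra $Y^0$.
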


\begin{proof}
Put $d=h_0-c-1$. \eqref{eq:rank-B-shifts-1} gives $B_{h_0-c}(u)=B_{d+1}(u)
=\left(1+\frac{d}{u}\right)B_d(u)$. Hence
\[
\begin{aligned}
\mathcal A_{c+1}(u)=u^{c+1}\frac{Q(u)}{B_d(u)}=(u+d)u^c\frac{Q(u)}{B_{d+1}(u)}=(u+h_0-c-1)\mathcal A_c(u).
\end{aligned}
\]
Using 
\eqref{eq:rank-B-shifts-2}, we obtain
\[
\begin{aligned}
\mathcal A_{c+1}(u+1)
&=(u+1)^{c+1}\frac{Q(u+1)}{B_d(u+1)}\\
&=(u+1)^{c+1}(1+u^{-1})^{-c-1}
H(u)\frac{Q(u)}{B_{d+1}(u)}\\
&=u^{c+1}H(u)\frac{Q(u)}{B_{h_0-c}(u)}=uH(u)\mathcal A_c(u).
\end{aligned}
\]
This proves \eqref{eq:rank-Ac-shifts-2}.
\end{proof}

Set
\[
\varepsilon(u)=u^{-1}E(u)
=\sum_{r\geq0}e_ru^{-r-1},
\qquad
\varphi(u)=u^{-1}F(u)
=\sum_{r\geq0}f_{r+1}u^{-r-1}.
\]
The rank-one relations \eqref{eq:prelim-xx} are equivalently written
as
\begin{equation}\label{eq:rank-normalized-current-relations}
\begin{aligned}
(u-v)[\varepsilon(u),\varepsilon(v)]
&=-\bigl(\varepsilon(u)-\varepsilon(v)\bigr)^2,\\
(u-v)[\varphi(u),\varphi(v)]
&=\bigl(\varphi(u)-\varphi(v)\bigr)^2.
\end{aligned}
\end{equation}

\begin{lemma}\label{lem:rank-residue-identities}
For $k\geq1$,
\[
[e_0,\varphi(u)^{(k)}]
=u\varphi(u+1)^{(k-1)}H(u)
-\varphi(u)^{(k-1)}(u+h_0-k+1).
\]
For every $c\in\mathbb Z$ and $l\geq0$,
\[
e_0\mathcal A_c(u)\varepsilon(u)^{(l)}
=\mathcal A_{c+2}(u)
\left(
\varepsilon(u-1)^{(l+1)}
-\varepsilon(u)^{(l+1)}
\right).
\]
\end{lemma}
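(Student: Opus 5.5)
Both identities will be proved by induction, the first on $k$ and the second on $l$. The base cases come directly from the defining relations. The inductive steps use two tools. The first is the quadratic relations \eqref{eq:rank-normalized-current-relations} specialised to $v=u\pm1$. The second is the conjugation formulas \eqref{eq:prelim-H-conjugation} and Lemma~\ref{lem:rank-Q-conjugation}, rewritten as statements about the generating series at a shifted argument.

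\emph{First identity.} For $k=1$, \eqref{eq:prelim-ef} gives
\[
[e_0,\varphi(u)]=\sum_{r\ge0}h_{r+1}u^{-r-1}=uH(u)-u-h_0,
\]
which is the claim since $\varphi^{(0)}=1$.
\begin{itemize}
\item I first convert \eqref{eq:prelim-H-conjugation} into a series identity. Multiplication by $u^{-1}$ together with the shift $S$ acting on $\varphi(u)$ should amount to the substitution $u\mapsto u+1$. The aim is to show that $H(u)\varphi(u)$ equals $\varphi(u+1)H(u)$ up to explicitly computable terms.
\item Specialising \eqref{eq:rank-normalized-current-relations} to $v=u+1$ gives $[\varphi(u),\varphi(u+1)]=-(\varphi(u)-\varphi(u+1))^2$. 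This yields a clean commutation rule between $\varphi(u)$ and $\varphi(u+1)$.
\item For the inductive step, I write $k\varphi^{(k)}=\varphi\,\varphi^{(k-1)}$ and expand $[e_0,\varphi\varphi^{(k-1)}]=[e_0,\varphi]\varphi^{(k-1)}+\varphi[e_0,\varphi^{(k-1)}]$.
\item Each $H(u)$ is then moved to the right using the previous step. The constant terms $u+h_0$ are moved to the right using $[h_0,\varphi]=-2\varphi$; this produces the shift $h_0-k+1$.
\item The remaining products of $\varphi(u)$ and $\varphi(u+1)$ are collapsed into $\varphi(u+1)^{(k-1)}$ using the commutation rule. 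Finally I divide by $k$.
\end{itemize}

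\emph{Second identity.} I start from $e_0\mathcal A_c(u)$ and move $e_0$ to the right.
\begin{itemize}
\item Since $[h_0,e_0]=2e_0$, we have $B_{h_0-c}(u)^{-1}e_0=e_0B_{h_0-c+2}(u)^{-1}$.
\item Inverting Lemma~\ref{lem:rank-Q-conjugation} gives $Q(u)^{-1}e_0Q(u)=K(u,S)^{-1}e_0$. Hence $e_0Q(u)=Q(u)\,(K(u,S)^{-1}e_0)$.
\item The operator $u^2K(u,S)^{-1}=\bigl((u-S)(u-S-1)\bigr)^{-1}u^4\cdot u^{-2}$ applied to $e_0$ should be expressible through $u\varepsilon(u)$ and $\varepsilon(u-1)$. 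Partial fractions give
\[
\frac{1}{(u-S)(u-S-1)}=\frac{1}{u-S-1}-\frac{1}{u-S}.
\]
\item Applied to $e_0$, the two terms are exactly $\varepsilon(u-1)$ and $\varepsilon(u)$. This should give the case $l=0$:
\[
e_0\mathcal A_c(u)=\mathcal A_{c+2}(u)\bigl(\varepsilon(u-1)-\varepsilon(u)\bigr).
\]
The powers of $u$ and the passage from $B_{h_0-c+2}$ to $\mathcal A_{c+2}$ are absorbed by the definition of $\mathcal A$, using \eqref{eq:rank-Ac-shifts-1} if needed.
\end{itemize}

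For general $l$, it then suffices to prove the purely positive identity
\[
\bigl(\varepsilon(u-1)-\varepsilon(u)\bigr)\varepsilon(u)^{(l)}=\varepsilon(u-1)^{(l+1)}-\varepsilon(u)^{(l+1)}.
\]
\begin{itemize}
\item The relation \eqref{eq:rank-normalized-current-relations} at $v=u-1$ reads $[\varepsilon(u),\varepsilon(u-1)]=-(\varepsilon(u)-\varepsilon(u-1))^2$. Equivalently, $\delta=\varepsilon(u-1)-\varepsilon(u)$ satisfies $\varepsilon(u)\delta+\delta^2=\delta\varepsilon(u)$, up to rearrangement.
\item From this, one shows by induction that $\delta\,\varepsilon(u)^l$ and $(\varepsilon(u)+\delta)^{l+1}-\varepsilon(u)^{l+1}$ agree after division by $(l+1)!$.
\end{itemize}

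The main obstacle is the bookkeeping of the shift operator versus the substitution $u\mapsto u\pm1$. This enters in two places: deriving the exact normal form of $H(u)\varphi(u)$ in the first identity, and identifying $K(u,S)^{-1}e_0$ with $\varepsilon(u-1)-\varepsilon(u)$ with the correct $u$-powers in the second. I would do both of these first, checking coefficients of low powers of $u^{-1}$, before running the inductions.
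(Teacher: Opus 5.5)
Your plan uses the same ingredients as the paper's proof and is essentially that proof, apart from one bookkeeping error in the second identity.

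\textbf{First identity.} The normal form you leave open is
$H(u)\varphi(u)H(u)^{-1}=2\varphi(u+1)-\varphi(u)$. It follows from $\varphi(u)=(u-S)^{-1}f_1$, \eqref{eq:prelim-H-conjugation}, and the partial fraction $\frac{u-S-1}{(u-S)(u-S+1)}=\frac{2}{u-S+1}-\frac{1}{u-S}$. Put $x=\varphi(u)$, $y=\varphi(u+1)$, $d=y-x$. Your relation at $v=u+1$ is then $[d,y]=d^2$, which gives $(y+d)^m=y^m+mdy^{m-1}$. The collapse needed in your inductive step is $(2y-x)^{k-1}+(k-1)xy^{k-2}=ky^{k-1}$, and it follows at once from this. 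Your induction on $k$ just repackages the paper's direct sum $\sum_i x^i(2y-x)^{k-1-i}=ky^{k-1}$.

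\textbf{Second identity, general $l$.} You reduce to $l=0$ together with $(l+1)\delta\varepsilon(u)^l=(\varepsilon(u)+\delta)^{l+1}-\varepsilon(u)^{l+1}$. This is what your phrase ``agree after division'' should say, and it follows from $[\delta,\varepsilon(u)]=\delta^2$ by the same power formula. This is exactly the paper's argument.

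\textbf{The step that fails: the weight shift when $l=0$.} In $e_0\mathcal A_c(u)=u^c\,e_0\,Q(u)B_{h_0-c}(u)^{-1}$, the element $e_0$ stands to the left of both Cartan factors, so it must be moved to the right.
\begin{itemize}
\item Passing $Q(u)$ gives $e_0Q(u)=Q(u)\,K(u,S)^{-1}e_0=Q(u)\,u^2\delta$, where $\delta=\varepsilon(u-1)-\varepsilon(u)$. Your displayed formula for $u^2K(u,S)^{-1}$ is off by a factor $u^2$; what you need is $K(u,S)^{-1}e_0=u^2\delta$.
\item Since $[h_0,\delta]=2\delta$, passing $B_{h_0-c}(u)^{-1}$ gives $\delta B_{h_0-c}(u)^{-1}=B_{h_0-c-2}(u)^{-1}\delta$.
\item Hence $e_0\mathcal A_c(u)=u^{c+2}Q(u)B_{h_0-c-2}(u)^{-1}\delta=\mathcal A_{c+2}(u)\delta$ directly, with nothing left to absorb.
\end{itemize}
The identity you wrote, $B_{h_0-c}(u)^{-1}e_0=e_0B_{h_0-c+2}(u)^{-1}$, is true but moves $e_0$ the wrong way, and it produces the index $h_0-c+2$. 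That index cannot be reconciled with $\mathcal A_{c+2}$ ``using \eqref{eq:rank-Ac-shifts-1} if needed.'' Indeed $u^{c+2}Q(u)B_{h_0-c+2}(u)^{-1}=u^4\mathcal A_{c-2}(u)$. Iterating \eqref{eq:rank-Ac-shifts-1} gives $\mathcal A_{c+2}(u)=\prod_{j=-2}^{1}(u+h_0-c+j)\,\mathcal A_{c-2}(u)$. So the two differ by a nontrivial Cartan-valued factor, not by a power of $u$.

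With the direction of the weight shift corrected, your argument goes through.
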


\begin{proof}
We first compute the commutator with $\varphi(u)^{(k)}$. Since
$[e_0,f_{r+1}]=h_{r+1}$,
\[
[e_0,\varphi(u)]
=uH(u)-u-h_0.
\]
Moreover, $\varphi(u)=(u-S)^{-1}f_1$, so the second identity in
\eqref{eq:prelim-H-conjugation} gives
\[
H(u)\varphi(u)H(u)^{-1}
=2\varphi(u+1)-\varphi(u).
\]
Put $x=\varphi(u)$, $y=\varphi(u+1)$, and $d=y-x$. Taking
$v=u+1$ in the second identity of
\eqref{eq:rank-normalized-current-relations} gives
$[d,y]=d^2$. Hence, by induction on $m$,
\begin{equation}\label{eq:rank-negative-power}
(y+d)^m=y^m+mdy^{m-1},
\end{equation}
and therefore
\begin{equation}\label{eq:rank-negative-sum}
\sum_{i=0}^{k-1}x^i(2y-x)^{k-1-i}
=ky^{k-1}.
\end{equation}
Using $[e_0,x]=uH(u)-u-h_0$, we obtain
\[
[e_0,x^k]
=\sum_{i=0}^{k-1}x^i(uH(u)-u-h_0)x^{k-1-i}.
\]
Since
$H(u)x^j=(2y-x)^jH(u)$, the terms containing $H(u)$ give, by
\eqref{eq:rank-negative-sum},
\[
u\sum_{i=0}^{k-1}x^iH(u)x^{k-1-i}
=uky^{k-1}H(u).
\]
Since $[h_0,x]=-2x$,
\[
\sum_{i=0}^{k-1}x^i(u+h_0)x^{k-1-i}
=kx^{k-1}(u+h_0-k+1).
\]
Dividing the resulting identity by $k!$ proves the first assertion.

For the second assertion, the inverse of the positive identity in
\eqref{eq:rank-Q-conjugation} gives
\[
Q(u)^{-1}e_0Q(u)
=\frac{u^2}{(u-S)(u-S-1)}e_0.
\]
Let $\eta(u)=\varepsilon(u-1)-\varepsilon(u)$. Then
\[
\eta(u)
=\frac{1}{(u-S)(u-S-1)}e_0,
\]
and hence $e_0Q(u)=Q(u)u^2\eta(u)$. Because $[h_0,\eta(u)]=2\eta(u)$,
\[
\eta(u)B_{h_0-c}(u)^{-1}
=B_{h_0-c-2}(u)^{-1}\eta(u).
\]
Therefore
\[
\begin{aligned}
e_0\mathcal A_c(u)=u^ce_0Q(u)B_{h_0-c}(u)^{-1}=u^{c+2}Q(u)B_{h_0-c-2}(u)^{-1}\eta(u)=\mathcal A_{c+2}(u)\eta(u).
\end{aligned}
\]
Taking $v=u-1$ in the first identity of
\eqref{eq:rank-normalized-current-relations} gives
$[\eta(u),\varepsilon(u)]=\eta(u)^2$. Applying
\eqref{eq:rank-negative-power} with
$y=\varepsilon(u)$ and $d=\eta(u)$ yields $\varepsilon(u-1)^{(l+1)}
-\varepsilon(u)^{(l+1)}
=\eta(u)\varepsilon(u)^{(l)}$.
\end{proof}

\begin{proof}[Proof of Theorem~\ref{thm:rank-basic}]
For a Laurent series at infinity with finitely many positive
powers,
\begin{equation}\label{eq:rank-residue-translation}
[u^{-1}]T(u+1)=[u^{-1}]T(u).
\end{equation}
For $k,l\in\mathbb N$ and $c\in\mathbb Z$, set
\[
\mathcal C_{k,c,l}
=[u^{-1}]
\left(
\varphi(u)^{(k)}\mathcal A_c(u)\varepsilon(u)^{(l)}
\right),
\]
and
\[
Z_{a,b}
=\sum_{t=0}^{\min(a,b)}
(-1)^t\mathcal C_{b-t,a+b-t-1,a-t}.
\]
This is the right hand side of \eqref{eq:rank-basic}. We will prove
\begin{equation}\label{eq:rank-Z-recurrence}
e_0Z_{a,b}=(a+1)Z_{a+1,b}.
\end{equation}
Lemmas~\ref{lem:rank-Ac-shifts} and
\ref{lem:rank-residue-identities}, together with
\eqref{eq:rank-residue-translation}, give
\begin{equation}\label{eq:rank-C-recurrence}
\begin{aligned}
e_0\mathcal C_{k,c,l}
={}&[u^{-1}]
\left(
\varphi(u+1)^{(k-1)}
\mathcal A_{c+1}(u+1)
\varepsilon(u)^{(l)}
\right)\\
&-[u^{-1}]
\left(
\varphi(u)^{(k-1)}
(u+h_0-k+1)\mathcal A_c(u)
\varepsilon(u)^{(l)}
\right)\\
&+[u^{-1}]
\left(
\varphi(u+1)^{(k)}
\mathcal A_{c+2}(u+1)
\varepsilon(u)^{(l+1)}
\right)\\
&-[u^{-1}]
\left(
\varphi(u)^{(k)}
\mathcal A_{c+2}(u)
\varepsilon(u)^{(l+1)}
\right).
\end{aligned}
\end{equation}
Let $m=\min(a,b)$. For $0\leq t\leq m$, set
\[
k_t=b-t,
\qquad
c_t=a+b-t-1,
\qquad
l_t=a-t,
\]
and denote the four terms on the right hand side of
\eqref{eq:rank-C-recurrence} by $I_t,-J_t,K_t,-L_t$,
respectively. If $t=b$, set $I_t=J_t=0$. Then
\[
e_0Z_{a,b}
=\sum_{t=0}^{m}(-1)^t(I_t-J_t+K_t-L_t).
\]
For $0\leq t<m$, \eqref{eq:rank-residue-translation} gives
$I_t=K_{t+1}$, and \eqref{eq:rank-Ac-shifts-1} gives
\[
\begin{aligned}
L_{t+1}-J_t
&=[u^{-1}]
\left(
\varphi(u)^{(k_t-1)}
\bigl(
\mathcal A_{c_t+1}(u)
-(u+h_0-k_t+1)\mathcal A_{c_t}(u)
\bigr)
\varepsilon(u)^{(l_t)}
\right)\\
&=-(a+1)\mathcal C_{k_t-1,c_t,l_t}.
\end{aligned}
\]
Consequently,
\begin{equation}\label{eq:rank-keyformula}
\begin{aligned}
e_0Z_{a,b}
={}&(K_0-L_0)+(-1)^m(I_m-J_m)+(a+1)\sum_{t=0}^{m-1}(-1)^{t+1}
\mathcal C_{k_t-1,c_t,l_t}.
\end{aligned}
\end{equation}
Using \eqref{eq:rank-residue-translation} for $K_0$ and the second
assertion of Lemma~\ref{lem:rank-residue-identities}, we obtain
\[
\begin{aligned}
K_0-L_0
&=[u^{-1}]
\left(
\varphi(u)^{(b)}
\mathcal A_{a+b+1}(u)
\bigl(\varepsilon(u-1)-\varepsilon(u)\bigr)
\varepsilon(u)^{(a)}
\right).
\end{aligned}
\]
Since
\[
\begin{aligned}
\varepsilon(u-1)-\varepsilon(u)
&=e_0u^{-2}+O(u^{-3}),\\
\varepsilon(u)^{(a)}
&=e_0^{(a)}u^{-a}+O(u^{-a-1}),\\
\varphi(u)^{(b)}
&=f_1^{(b)}u^{-b}+O(u^{-b-1}),\\
\mathcal A_{a+b+1}(u)
&=u^{a+b+1}(1+O(u^{-1})),
\end{aligned}
\]
it follows that
\[
K_0-L_0
=(a+1)f_1^{(b)}e_0^{(a+1)}
=(a+1)\mathcal C_{b,a+b,a+1}.
\]
If $b\leq a$, then $m=b$ and $I_m=J_m=0$. If $a<b$,
then $m=a$, and 
\eqref{eq:rank-Ac-shifts-1} gives
\[
I_a-J_a
=-(a+1)\mathcal C_{b-a-1,b-1,0}.
\]
Substituting these identities into \eqref{eq:rank-keyformula}
gives \eqref{eq:rank-Z-recurrence}. For $a=0$, only $t=0$ occurs and
$Z_{0,b}=\mathcal C_{b,b-1,0}$. Since
\[
\varphi(u)^{(b)}
=f_1^{(b)}u^{-b}+O(u^{-b-1}),
\qquad
\mathcal A_{b-1}(u)
=u^{b-1}(1+O(u^{-1})),
\]
we have $Z_{0,b}=f_1^{(b)}$. Induction on $a$ now gives $e_0^{(a)}f_1^{(b)}=Z_{a,b}$, which proves \eqref{eq:rank-basic}.

It remains to verify that every summand on the right hand side of
\eqref{eq:rank-basic} lies in $M$. Fix $t$ and put $C_t(u)=\frac{Q(u)}{B_{h_0-a-b+t+1}(u)}$. By \eqref{eq:rank-Ac-triangular},
$C_t(u)\in\mathcal H_R[[u^{-1}]]$, while
\eqref{eq:current-integrality} gives
\[
[u^{-i}]F(u)^{(b-t)}\in\mathcal Y_R^{<},
\qquad
[u^{-k}]E(u)^{(a-t)}\in\mathcal Y_R^{>}.
\]
Hence the $t$-th summand is the finite sum
\[
\sum_{\substack{i,j,k\geq0\\i+j+k=t}}
\left([u^{-i}]F(u)^{(b-t)}\right)
\left([u^{-j}]C_t(u)\right)
\left([u^{-k}]E(u)^{(a-t)}\right),
\]
and every term belongs to $M$.
\end{proof}

\subsection{Products of divided powers}
\label{subsec:rank-products}

We now pass from the basic formula for $e_0^{(a)}f_1^{(b)}$ to
products $e_r^{(a)}f_s^{(b)}$ with arbitrary $r,s\in\mathbb N$.
For $n,b\in\mathbb N$, put
\[
D_n^-(b)=[u^{-n}]F(u)^{(b)},
\qquad
E_n^+(b)=[u^{-n}]E(u)^{(b)}.
\]
By \eqref{eq:current-integrality}, these elements belong to
$\mathcal Y_R^{<}$ and $\mathcal Y_R^{>}$, respectively.

\begin{lemma}\label{lem:rank-e0-current}
For all $a,b,n\in\mathbb N$,
\[
e_0^{(a)}D_n^-(b)\in M.
\]
\end{lemma}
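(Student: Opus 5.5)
We plan to use the automorphism $\Phi_{0,z}$ to pass from $f_1^{(b)}$ to the current $F(u)^{(b)}$, and to apply Theorem~\ref{thm:rank-basic} after the substitution. Recall from the proof of Corollary~\ref{cor:prelim-current-integrality} that $\Phi_{0,u^{-1}}(f_1)=F(u)$, hence $\Phi_{0,u^{-1}}(f_1^{(b)})=F(u)^{(b)}$. The same automorphism sends $e_0$ to $e_0-u^{-1}e_1$, not to $e_0$. For this reason we will not apply $\Phi$ to the product $e_0^{(a)}f_1^{(b)}$ directly. Instead we will write $e_0^{(a)}$ in the form $\Phi_{0,z}(X(z))$ with $X(z)\in\mathcal Y_R^{>}[[z]]$.

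By \eqref{eq:prelim-Phi-inverse} we have $\Phi_{0,z}^{-1}(e_0)=\sum_{n\ge0}z^n e_n=E(z^{-1})$. Hence
\[
e_0^{(a)}=\Phi_{0,z}\bigl(E(z^{-1})^{(a)}\bigr),
\qquad
e_0^{(a)}F(z^{-1})^{(b)}=\Phi_{0,z}\bigl(E(z^{-1})^{(a)}f_1^{(b)}\bigr).
\]
By \eqref{eq:rank-Phi-M} it suffices to show that every coefficient of $E(z^{-1})^{(a)}f_1^{(b)}$ lies in $M$. These coefficients are the elements $E^+_n(a)f_1^{(b)}$. The claim then follows by taking the coefficient of $z^n$: the coefficient of $z^n$ on the left side is exactly $e_0^{(a)}D_n^-(b)$.

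So the lemma reduces to showing $E_n^+(a)f_1^{(b)}\in M$ for all $n$. This is the mirror problem. A second use of the automorphisms is tempting: we need $E(u)^{(a)}$ on the left while keeping $f_1^{(b)}$ fixed. Applying $\Phi_{0,u^{-1}}^{-1}$ to $e_0^{(a)}f_1^{(b)}$, however, produces $E(u)^{(a)}\bigl((1-u^{-1}S)f_1\bigr)^{(b)}=E(u)^{(a)}(f_1-u^{-1}f_2)^{(b)}$. By \eqref{eq:prelim-adjacent-binomial} with $\varepsilon=-1$, this expands as
\[
\sum_j u^{-(b-j)}(-1)^{b-j}\prod_{q<j}(1+q u^{-1})\,E(u)^{(a)}f_1^{(j)}f_2^{(b-j)}.
\]
This does not isolate a single term, so we will instead argue by induction.

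We induct on $n$, with the inner statement being that $E^+_{n'}(a')f_1^{(b')}\in M$ for all $a',b'$ and all $n'<n$. Since $\Phi^{-1}_{0,u^{-1}}(e_0^{(a)}f_1^{(b)})\in M[[u^{-1}]]$ by Theorem~\ref{thm:rank-basic} and \eqref{eq:rank-Phi-M}, its $u^{-n}$ coefficient lies in $M$. Since $M\mathcal Y_R^{>}\subseteq M$ does not help on the right with negative elements, we will also use the reordering $f_1^{(j)}f_2^{(b-j)}$. This expresses the $u^{-n}$ coefficient as $E^+_n(a)f_1^{(b)}$ plus terms $E^+_{n'}(a)$, with $n'<n$, times elements of $\mathcal Y_R^{<}$ built from the $f_1,f_2$ divided powers. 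To control these terms, we will strengthen the induction hypothesis to $E^+_{n'}(a)y\in M$ for all $y\in\mathcal Y_R^{<}$ generated by $f_r^{(k)}$ with $r\le 2$. Such $y$ are again images under the $\Phi_{c,z}$ of products of $f_1$-divided powers, so the same trick applies.

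The main obstacle is exactly this bookkeeping. We must make sure that the correction terms have strictly smaller $n$ and remain in a class closed under the argument. If this fails, the fallback is to redo the residue computation of Theorem~\ref{thm:rank-basic} with $f_1^{(b)}$ replaced by $\varphi(v)^{(b)}$ at an independent variable $v$, deriving a two-variable basic formula whose summands lie in $M$ by \eqref{eq:current-integrality} and \eqref{eq:rank-Ac-triangular}.
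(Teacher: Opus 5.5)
Your reduction is correct: since $\Phi_{0,z}^{-1}(e_0)=E(z^{-1})$ and $\Phi_{0,z}(f_1)=F(z^{-1})$, \eqref{eq:rank-Phi-M} turns the lemma into the statement $E_n^+(a)f_1^{(b)}\in M$ for all $n$. But that statement is just the mirror image of the original one, and your argument for it does not close. The automorphisms only produce relations among the unknowns; they never isolate one.

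Already for $b=1$, the $z$-coefficient of $\Phi_{0,z}^{-1}(e_0^{(a)}f_1)=E(z^{-1})^{(a)}(f_1-zf_2)$ gives only $E_1^+(a)f_1-e_0^{(a)}f_2\in M$. Separating the two terms needs $e_0^{(a)}f_2\in M$, which the paper deduces in Proposition~\ref{prop:rank-all-mixed} \emph{from} this lemma. In general, for a term $E_n^+(a)f_1^{(j)}f_2^{(k)}$ in your enlarged class, the only identity your scheme offers is $\Phi_{0,z}^{-1}(e_0^{(a)}f_1^{(j+k)})$. Starting instead from $e_0^{(a)}f_1^{(j)}f_2^{(k)}$ would presuppose what Proposition~\ref{prop:rank-all-mixed} proves. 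The $z^{n+k}$-coefficient of that identity also contains $(-1)^{k'}E^+_{n+k-k'}(a)f_1^{(j+k-k')}f_2^{(k')}$ for every $k'<k$, whose index is larger than $n$. So the induction on $n$ does not close, and the assertion that ``the same trick applies'' is unsupported. The two-variable fallback is only named, not derived.

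The missing idea is to extract the extremal coefficient from Theorem~\ref{thm:rank-basic} itself, choosing exponents so that one current divided power is used up. With exponents $(n,n+b)$, the $t=n$ summand contains $E(u)^{(0)}=1$. Hence $e_0^{(n)}f_1^{(n+b)}=(-1)^nD_n^-(b)+W$ with $W\in\sum_{i<n,\,c}D_i^-(c)\mathcal H_R\mathcal Y_R^{>}$. Multiplying on the left by $e_0^{(a)}$ and using $e_0^{(a)}e_0^{(n)}=\binom{a+n}{a}e_0^{(a+n)}$ puts the left side in $M$. Meanwhile $e_0^{(a)}W\in M$ by induction on $n$ and \eqref{eq:rank-M-stability}; this is the paper's proof.

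The same device also settles your reduced problem directly. Exponents $(n+a,n)$ give $e_0^{(n+a)}f_1^{(n)}=(-1)^nE_n^+(a)+W'$ with $W'\in\sum_{l<n,\,c}\mathcal Y_R^{<}\mathcal H_RE_l^+(c)$, and right multiplication by $f_1^{(b)}$ finishes by induction. With this step your $\Phi$ detour is harmless but unnecessary; without it, the proposal contains no proof of the key step.
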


\begin{proof}
We induct on $n$, simultaneously for all $a,b\in\mathbb N$. The
case $n=0$ follows from $D_0^-(b)=f_1^{(b)}$ and
Theorem~\ref{thm:rank-basic}. Let $n>0$ and assume that
\[
e_0^{(a)}D_i^-(c)\in M
\qquad
(a,c\in\mathbb N,\ 0\leq i<n).
\]
Applying \eqref{eq:rank-basic} with exponents $n$ and $n+b$ gives
\[
e_0^{(n)}f_1^{(n+b)}
=\sum_{t=0}^{n}(-1)^t[u^{-t}]
\left(
F(u)^{(n+b-t)}
\frac{Q(u)}{B_{h_0-2n-b+t+1}(u)}
E(u)^{(n-t)}
\right).
\]
Write
\[
\frac{Q(u)}{B_{h_0-2n-b+t+1}(u)}
=\sum_{j\geq0}c_{t,j}u^{-j},
\qquad
c_{t,0}=1,
\quad c_{t,j}\in\mathcal H_R.
\]
The coefficient in the $t$-th summand is
\[
\sum_{\substack{i,j,l\geq0\\i+j+l=t}}
D_i^-(n+b-t)c_{t,j}E_l^+(n-t).
\]
The term with $t=n$ and $i=n$, $j=l=0$ is
$(-1)^nD_n^-(b)$, while every other term involving a negative
coefficient has index $i<n$. Thus
\begin{equation}\label{eq:rank-D-extraction}
e_0^{(n)}f_1^{(n+b)}
=(-1)^nD_n^-(b)+W,
\qquad
W\in
\sum_{\substack{0\leq i<n\\c\geq0}}
D_i^-(c)\mathcal H_R\mathcal Y_R^{>},
\end{equation}
Multiply \eqref{eq:rank-D-extraction} on the left by $e_0^{(a)}$.
The left hand side belongs to $M$, since
\[
e_0^{(a)}e_0^{(n)}f_1^{(n+b)}
=\binom{a+n}{a}e_0^{(a+n)}f_1^{(n+b)}\in M
\]
by Theorem~\ref{thm:rank-basic}. By the induction hypothesis,
$e_0^{(a)}D_i^-(c)\in M$ for $i<n$, and right multiplication by
the remaining Cartan and positive factors preserves $M$ by
\eqref{eq:rank-M-stability}. Hence $e_0^{(a)}W\in M$, and the
claim follows from \eqref{eq:rank-D-extraction}.
\end{proof}

We use the root grading on $\mathcal Y_R^{<}$ in which each $f_j$
has degree $1$, together with the loop filtration in which $f_j$
has degree $j$. In rank one, the ordering fixed in
\eqref{eq:prelim-root-order} gives ordered products of the form
\[
x=f_0^{(a_0)}f_1^{(a_1)}f_2^{(a_2)}\cdots,
\]
with finitely many nonzero exponents. Set
\[
\deg_{\mathrm{rt}}x=\sum_{j\geq0}a_j,
\qquad
\deg_{\mathrm{loop}}x=\sum_{j\geq0}ja_j.
\]
The following lemma isolates $f_s^{(b)}$ in
$D_{(s-1)b}^-(b)$.

\begin{lemma}\label{lem:rank-D-triangular}
For $s,b\geq1$,
\begin{equation}\label{eq:rank-D-triangular}
D_{(s-1)b}^-(b)
=f_s^{(b)}
+\sum_{1\leq j<s}\lambda_jf_j^{(b)}+V,
\qquad
\lambda_j\in R,
\end{equation}
where $V$ is an $R$-linear combination of ordered products $\prod_{j\geq1}f_j^{(a_j)}$ satisfying
\[
\sum_{j\geq1}a_j=b,
\qquad
\#\{j:a_j>0\}\geq2.
\]
In particular, every nonzero exponent occurring in $V$ is strictly
less than $b$.
\end{lemma}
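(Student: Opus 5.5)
The plan is to reduce to the unshifted current $G(u)=\sum_{r\geq0}f_ru^{-r}$, expand in the $R$-basis of Theorem~\ref{thm:prelim-one-sided-pbw}, and read off the coefficient of the single-index term from the loop filtration.

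\textbf{Step 1: reduction by the shift.} Let $\sigma$ be the endomorphism of $Y^{<}$ with $f_r\mapsto f_{r+1}$. This is the substitution $x_r\mapsto ax_r+bx_{r+1}$ from the proof of Proposition~\ref{prop:prelim-formal-automorphisms}, with $a=0$ and $b=1$, so it preserves the defining relations of $Y^{<}$. By \eqref{eq:prelim-adjacent-binomial} it sends $f_r^{(m)}$ to $f_{r+1}^{(m)}$. Hence $\sigma$ maps an ordered product $\prod^{\leftarrow}_j f_j^{(a_j)}$ to the ordered product $\prod^{\leftarrow}_j f_{j+1}^{(a_j)}$, and the ordering \eqref{eq:prelim-root-order} is preserved by $j\mapsto j+1$. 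Since $F(u)=\sigma(G(u))$ coefficientwise, put $k=s-1\geq0$ and $N=[u^{-kb}]G(u)^{(b)}$. Then $D^-_{(s-1)b}(b)=\sigma(N)$. It therefore suffices to show
\[
N=f_k^{(b)}+\sum_{0\leq j<k}\lambda_jf_j^{(b)}+V_0,\qquad \lambda_j\in R,
\]
where $V_0$ is an $R$-combination of ordered products with $\sum a_j=b$ and at least two nonzero exponents. Applying $\sigma$ then gives \eqref{eq:rank-D-triangular} with the index range $1\leq j<s$.

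\textbf{Step 2: integrality and shape of the expansion.} As in Corollary~\ref{cor:prelim-current-integrality}, \eqref{eq:prelim-Phi} gives $\Phi_{0,u^{-1}}(f_0)=G(u)$. So $G(u)^{(b)}=\Phi_{0,u^{-1}}(f_0^{(b)})\in\mathcal Y_R^{<}[[u^{-1}]]$ by \eqref{eq:prelim-Phi-integral}, and hence $N\in\mathcal Y_R^{<}$. By Theorem~\ref{thm:prelim-one-sided-pbw}, $N$ is an $R$-linear combination of ordered products $\prod^{\leftarrow} f_j^{(a_j)}$. The defining relations are homogeneous for the root grading and $N$ has root degree $b$, so only products with $\sum a_j=b$ occur. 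Such a product either has a single nonzero exponent, so it is $f_j^{(b)}$, or at least two, in which case each exponent is $<b$. It remains to control the single-index terms.

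\textbf{Step 3: leading term via the loop filtration.} Here I use that the loop filtration on $Y^{<}$ has associated graded the symmetric algebra $\mathbb C[\bar f_0,\bar f_1,\ldots]$. This holds because \eqref{eq:prelim-xx} lowers loop degree by one. I also use that the ordered PBW monomials form a filtered basis, meaning the monomials of loop degree $\leq d$ span the $d$-th filtered piece; this is the standard PBW theorem \cite{Lev,GTL1}. Every monomial in the expansion of $G(u)^b$ contributing to $u^{-kb}$ has loop degree exactly $kb$. So $N$ lies in filtered degree $kb$, and its image in $\operatorname{gr}_{kb}$ is the coefficient of $u^{-kb}$ in $(\sum_r\bar f_ru^{-r})^b/b!$. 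In that commutative expansion, the coefficient of $\bar f_k^{\,b}/b!$ is $1$. Consequently, a term $f_j^{(b)}$ with $j>k$ cannot occur, since it has loop degree $jb>kb$. The term $f_k^{(b)}$ occurs with coefficient $1$, because other basis monomials of degree $kb$ have linearly independent images. The terms $f_j^{(b)}$ with $j<k$ have coefficients $\lambda_j\in R$ by integrality of the basis expansion. This gives the claimed form of $N$.

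The main obstacle is Step 3's compatibility claim: that the divided-power PBW monomials of Theorem~\ref{thm:prelim-one-sided-pbw} form a filtered basis whose symbols are the commutative monomials. I would justify it first rationally, where the ordered monomials $\prod f_j^{a_j}$ are a filtered basis of $Y^{<}$, and then note that the divided powers differ from these by nonzero scalars. Extracting the coefficient over $\mathbb C$ and then invoking $R$-integrality of the expansion gives $\lambda_j\in R$.
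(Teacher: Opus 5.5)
Your proof is correct and is essentially the paper's argument. The paper also expands $D^-_{(s-1)b}(b)$ as $\frac{1}{b!}\sum f_{i_1}\cdots f_{i_b}$ and uses that reordering via \eqref{eq:reorderingff} only produces correction terms of strictly lower loop degree, which is your statement that $\operatorname{gr}Y^{<}$ is polynomial with the ordered monomials as a filtered basis; it then reads off the coefficient $1$ of $f_s^{(b)}$, excludes $f_j^{(b)}$ with $j>s$ by degree, and gets $\lambda_j\in R$ from integrality together with Theorem~\ref{thm:prelim-one-sided-pbw}. The only difference is cosmetic: you use the shift $\sigma$ to reduce to $G(u)=\sum_{r\geq0}f_ru^{-r}$ and so guarantee indices $\geq1$, whereas the paper gets this directly from the fact that the correction terms in \eqref{eq:reorderingff} have indices between $r$ and $t-1$.
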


\begin{proof}
The negative relation in \eqref{eq:prelim-xx} implies
\[
[f_r,f_t]
=\sum_{q=r}^{t-1}f_qf_{r+t-1-q},
\qquad 0\leq r<t.
\]
Thus an adjacent pair in the opposite order may be rewritten as
\begin{equation}\label{eq:reorderingff}
f_tf_r
=f_rf_t-\sum_{q=r}^{t-1}f_qf_{r+t-1-q},
\qquad r<t.
\end{equation}
All terms have the same root degree. The first term has loop degree $r+t$, while every term in the sum
has loop degree $r+t-1$. By the definition of $F(u)$,
\begin{equation}\label{eq:rank-D-word-expansion}
D_{(s-1)b}^-(b)
=\frac{1}{b!}
\sum_{\substack{i_1,\ldots,i_b\geq1\\
i_1+\cdots+i_b=sb}}
f_{i_1}\cdots f_{i_b}.
\end{equation}
Every word in this sum has root degree $b$ and loop degree $sb$.
The preceding commutation procedure therefore produces only ordered
products involving indices $j\geq1$, of root degree $b$ and loop
degree at most $sb$.

The term $f_s^{(b)}$ has loop degree $sb$ and hence cannot arise
from any correction term in \eqref{eq:reorderingff}. In \eqref{eq:rank-D-word-expansion}, it
can arise without correction only from the tuple
$i_1=\cdots=i_b=s$, whose contribution is precisely $f_s^{(b)}$.
Thus its coefficient is $1$.

An ordered product with only one nontrivial divided power factor is
$f_j^{(b)}$. Its loop degree is $jb\leq sb$, so $j\leq s$.
The case $j=s$ has just been determined, and the remaining such
terms have $1\leq j<s$. Every other ordered product has at least
two nonzero exponents whose sum is $b$, so each of these exponents
is strictly less than $b$. Finally, $D_{(s-1)b}^-(b)\in\mathcal Y_R^{<}$ by
\eqref{eq:current-integrality}, and the ordered divided power
products form an $R$-basis of $\mathcal Y_R^{<}$ by
Theorem~\ref{thm:prelim-one-sided-pbw}. Hence all coefficients in
the resulting expansion belong to $R$. This proves
\eqref{eq:rank-D-triangular}.
\end{proof}

\begin{proposition}\label{prop:rank-all-mixed}
For all $s,b\in\mathbb N$,
\[
Mf_s^{(b)}\subseteq M.
\]
In particular,
\[
e_r^{(a)}f_s^{(b)}\in M
\qquad(r,s,a,b\in\mathbb N).
\]
\end{proposition}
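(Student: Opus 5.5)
We aim to show that $M$ is stable under right multiplication by each $f_s^{(b)}$. Because $M=\mathcal Y_R^{<}\mathcal H_R\mathcal Y_R^{>}$ and $\mathcal Y_R^{<}M\subseteq M$, $\mathcal H_R M\subseteq M$ by \eqref{eq:rank-M-stability}, it suffices to prove
\[
x^+f_s^{(b)}\in M\qquad\text{for all }x^+\in\mathcal Y_R^{>}.
\]
By Theorem~\ref{thm:prelim-one-sided-pbw} we may take $x^+$ to be an ordered product $e_{r_1}^{(a_1)}\cdots e_{r_k}^{(a_k)}$. We then argue by induction on $k$: once $e_r^{(a)}M\subseteq M$ is known for every $r,a$, the claim follows, because $x^+f_s^{(b)}=e_{r_1}^{(a_1)}\bigl(\cdots e_{r_k}^{(a_k)}f_s^{(b)}\bigr)$. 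The argument therefore splits into two parts: (i) $e_r^{(a)}f_s^{(b)}\in M$ for all $r,s,a,b$, and (ii) $e_r^{(a)}M\subseteq M$.

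\textbf{Reduction of $r$ and $s$.} To remove the dependence on $r$, we use the automorphisms $\Phi_{c,z}$ together with \eqref{eq:rank-Phi-M}. Since $\Phi^{-1}_{0,z}(e_0)=\sum_n z^ne_n$, we have $\Phi^{-1}_{0,z}(e_0^{(a)})=\bigl(\sum_n z^n e_n\bigr)^{(a)}$. The $z^{ra}$-coefficient of this series is $e_r^{(a)}$ plus products of $e$'s with smaller exponents, and by Corollary~\ref{cor:prelim-current-integrality} and Theorem~\ref{thm:prelim-one-sided-pbw} this remainder has a triangularity mirroring Lemma~\ref{lem:rank-D-triangular}. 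Rather than working with this directly, it is cleaner to reduce everything to $e_0$. We first establish $e_0^{(a)}f_s^{(b)}\in M$ for all $a,b,s$, by induction on $b$ and then on $s$. For $s\ge1$, Lemma~\ref{lem:rank-D-triangular} expresses $f_s^{(b)}$ as $D^-_{(s-1)b}(b)$ minus an $R$-combination of $f_j^{(b)}$ with $1\le j<s$ and of ordered products $\prod f_j^{(a_j)}$ whose exponents are all $<b$. We then use four facts: $e_0^{(a)}D^-_{(s-1)b}(b)\in M$ by Lemma~\ref{lem:rank-e0-current}; the terms $e_0^{(a)}f_j^{(b)}$ with $j<s$ are handled by induction on $s$; the base case $s=1$ comes from Theorem~\ref{thm:rank-basic}; and $s=0$ follows by applying $\Phi_{c,z}$ and extracting coefficients.

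\textbf{Products in $V$.} For the products in $V$, we move $e_0^{(a)}$ across one factor at a time. The claim needed here is that $e_0^{(a')}f_j^{(a_j)}$ lies in $M$ for $a_j<b$, by induction on $b$, and that the result can be pushed further. The difficulty is that after passing the first factor we obtain elements of $M$ multiplied on the right by further $f$'s. This is exactly the statement $Mf_j^{(c)}\subseteq M$ for $c<b$, which we build into the induction. Concretely, we prove by induction on $b$ the combined statement
\[
Mf_s^{(b)}\subseteq M\quad\text{for all }s.
\]
Assuming it for all exponents $<b$, the $V$-terms are handled, and we reduce to $Mf_s^{(b)}$ for the single factor. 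For a term $x^-hx^+ f_s^{(b)}$ we commute $h$ by Proposition~\ref{prop:rank-cartan-exchange} and peel off the factors of $x^+$ one by one from the right. This requires $e_r^{(a)}f_s^{(b)}\in M$ and then right multiplication of the result by earlier $e$'s; that last step uses $M\mathcal Y_R^{>}\subseteq M$ only if the $e$ factor sits on the right, which it does not.

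\textbf{Main obstacle.} The hardest point is exactly this order issue: proving $e_r^{(a)}M\subseteq M$, or equivalently pushing a positive factor past a negative product. The first approach to try is a symmetric double induction on the total root degree $a+b$. Writing $e_{r}^{(a)}f^-$ with $f^-=f_{s_1}^{(b_1)}\cdots$, we commute $e_r^{(a)}$ past $f_{s_1}^{(b_1)}$ using the case $k=1$ for $e_r^{(a)}f_{s_1}^{(b_1)}$ expanded as $\sum y^-hy^+$. In this expansion every $y^+$ has $e$-degree $\le a$, and only the leading term has degree exactly $a$, with $y^+=e_r^{(a)}$ up to lower-order terms. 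The remaining products $y^+\cdot(\text{rest of }f^-)$ then have strictly smaller total degree, so the induction closes. Finally, we use $\tau$ to transport the $e_0$ statements to the needed $e_r$ statements via $\Phi_{c,z}$, exploiting the symmetry of $M$ under $\tau$.
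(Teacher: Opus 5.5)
Your handling of $e_0^{(a)}f_s^{(b)}$ matches the paper: Lemma~\ref{lem:rank-D-triangular}, Lemma~\ref{lem:rank-e0-current}, induction on $s$, and $Mf_j^{(c)}\subseteq M$ for $c<b$ built into an outer induction on $b$. The genuine gap is that the passage from $e_0$ to $e_r$ with $r\geq1$ is never supplied.

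You drop the expansion of $[z^{ra}]\Phi_{0,z}^{-1}(e_0^{(a)})$, and the closing appeal to $\tau$ cannot replace it. Although $\tau(M)=M$, it only turns $e_0^{(a)}f_s^{(b)}\in M$ into $e_s^{(b)}f_0^{(a)}\in M$. That still has a zero mode index, so it never reaches $e_r^{(a)}f_s^{(b)}$ with $r,s\geq1$.

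The missing idea is to apply the automorphism to the \emph{negative} factor and induct on $r$. With $\Theta_z=\Phi_{0,z}$, formula \eqref{eq:prelim-adjacent-binomial} gives the polynomial
\[
\Theta_z^{-1}(f_s^{(b)})=\sum_{j=0}^{b}(-z)^{b-j}\Bigl(\prod_{q=0}^{j-1}(1+qz)\Bigr)f_s^{(j)}f_{s+1}^{(b-j)}.
\]
Hence $e_r^{(a)}\Theta_z^{-1}(f_s^{(b)})\in M[z]$. The terms $j=0,b$ are covered by induction on $r$ (for all $s$). The terms $0<j<b$ are covered because $e_r^{(a)}\in M$ and both exponents are $<b$. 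Applying $\Theta_z$ and \eqref{eq:rank-Phi-M} yields $(e_r-ze_{r+1})^{(a)}f_s^{(b)}\in M[[z]]$. By \eqref{eq:prelim-adjacent-binomial} again, the $z^a$-coefficient of $(e_r-ze_{r+1})^{(a)}$ is exactly $(-1)^ae_{r+1}^{(a)}$, with no lower-order terms to control.

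Your abandoned route would need more work. Applying $\Phi_{0,z}^{-1}$ to $e_0^{(a)}f_s^{(b)}$ gives $E_z^{(a)}(f_s-zf_{s+1})^{(b)}$, not $E_z^{(a)}f_s^{(b)}$. The junk in $[z^{ra}]E_z^{(a)}$ also puts products of several smaller positive divided powers in front of $f_s^{(b)}$, which is the left-multiplication problem you had not yet solved.

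Separately, the case $s=0$ does not follow from $\Phi_{c,z}$, which only raises mode indices. It is the classical identity \eqref{eq:rank-zero-mode}.

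For the final step you correctly identify the ordering obstacle, but neither left stability $e_r^{(a)}M\subseteq M$ nor a re-organised induction on $a+b$ is needed. Peel the factor off the right: for $x^+=y^+e_r^{(a)}$,
\[
x^+f_s^{(b)}=(y^+f_s^{(b)})e_r^{(a)}+y^+[e_r^{(a)},f_s^{(b)}].
\]
The first term is handled by induction on the number of factors together with $M\mathcal Y_R^{>}\subseteq M$. The commutator lies in $M$ by part (i). It has root degree at most $a+b-2$ and $h_0$-weight $2(a-b)$, so every term $u^-hu^+$ of its expansion in the basis \eqref{eq:rank-basis} has $\deg_{\mathrm{rt}}u^-\leq b-1$. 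Hence $y^+u^-\in M$ by the hypothesis on smaller exponents.

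Your observation that positive parts have degree $\leq a$, with equality only in the leading term, is the mirror of this estimate. A total-degree induction built on it can be made to close. As written, though, it mixes an induction on $b$ with one on $a+b$ without reconciling them. In any case it rests on part (i) for all $r$, which is exactly the missing step above.
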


\begin{proof}
We shall use the standard $\mathfrak{sl}_2$ identity
\begin{equation}\label{eq:rank-zero-mode}
e_0^{(a)}f_0^{(b)}
=\sum_{j=0}^{\min(a,b)}
f_0^{(b-j)}
\binom{h_0-a-b+2j}{j}
e_0^{(a-j)}.
\end{equation}
In particular, $e_0^{(a)}f_0^{(b)}\in M$. We induct on $b$, simultaneously for all $s$. The case $b=0$ is
immediate. Fix $b\geq1$ and assume
\begin{equation}\label{eq:rank-smaller-right-stability}
Mf_j^{(c)}\subseteq M
\qquad(j\in\mathbb N,\ 0\leq c<b).
\end{equation}
We first prove $e_0^{(a)}f_s^{(b)}\in M$, then
$e_r^{(a)}f_s^{(b)}\in M$, and finally
$Mf_s^{(b)}\subseteq M$.

Fix $a\in\mathbb N$. We prove $e_0^{(a)}f_s^{(b)}\in M$ by
induction on $s$. The cases $s=0$ and $s=1$ follow from
\eqref{eq:rank-zero-mode} and Theorem~\ref{thm:rank-basic},
respectively. Let $s\geq2$ and suppose that
\[
e_0^{(a)}f_j^{(b)}\in M
\qquad(0\leq j<s).
\]
Multiplying \eqref{eq:rank-D-triangular} on the left by
$e_0^{(a)}$ gives
\[
e_0^{(a)}f_s^{(b)}
=e_0^{(a)}D_{(s-1)b}^-(b)
-\sum_{1\leq j<s}\lambda_je_0^{(a)}f_j^{(b)}
-e_0^{(a)}V.
\]
The first term belongs to $M$ by
Lemma~\ref{lem:rank-e0-current}, and the terms in the sum belong to
$M$ by the induction on $s$. Each term in $V$ is a product of
negative divided powers with exponents strictly less than $b$.
Starting from $e_0^{(a)}\in M$ and applying
\eqref{eq:rank-smaller-right-stability} successively shows that
$e_0^{(a)}V\in M$. Hence
\[
e_0^{(a)}f_s^{(b)}\in M
\qquad(a,s\in\mathbb N).
\]

We next induct on $r$, simultaneously for all $a$ and $s$. Suppose
that
\[
e_r^{(a)}f_s^{(b)}\in M
\qquad(a,s\in\mathbb N).
\]
Set $\Theta_z=\Phi_{0,z}$. Since
$\Theta_z^{-1}(f_s)=f_s-zf_{s+1}$, the formula \eqref{eq:prelim-adjacent-binomial} 
gives
\[
\Theta_z^{-1}(f_s^{(b)}) 
=\sum_{j=0}^{b}
(-z)^{b-j}
\left(\prod_{q=0}^{j-1}(1+qz)\right)
f_s^{(j)}f_{s+1}^{(b-j)}.
\]
For $j=0$ or $j=b$, the corresponding product after left
multiplication by $e_r^{(a)}$ belongs to $M$ by the induction on
$r$. If $0<j<b$, both exponents are smaller than $b$, so
\eqref{eq:rank-smaller-right-stability} shows that the corresponding
product also belongs to $M$. Thus
\[
e_r^{(a)}\Theta_z^{-1}(f_s^{(b)})\in M[z].
\]
By \eqref{eq:rank-Phi-M}, applying $\Theta_z$ gives
\[
(e_r-ze_{r+1})^{(a)}f_s^{(b)}\in M[[z]].
\]
The coefficient of $z^a$ in $(e_r-ze_{r+1})^{(a)}$ is
$(-1)^ae_{r+1}^{(a)}$, hence $e_{r+1}^{(a)}f_s^{(b)}\in M$, which completes the induction on $r$.

It remains to prove $Mf_s^{(b)}\subseteq M$. We first record the
root degree estimate needed below. Give $e_j$ and $f_j$ root degree
$1$ and $h_j$ root degree $0$. Since
$[e_r,f_s]=h_{r+s}$, one has
\[
\deg_{\mathrm{rt}}[e_r^{(a)},f_s^{(b)}]
\leq a+b-2.
\]
Moreover, every term in the expansion of this commutator with
respect to the basis \eqref{eq:rank-basis} has $h_0$-weight
$2(a-b)$. Thus, if such a term is written as $u^-hu^+$ and
\[
p=\deg_{\mathrm{rt}}u^- ,
\qquad
q=\deg_{\mathrm{rt}}u^+,
\]
then $q-p=a-b$, $p+q\leq a+b-2$. Consequently, $p\leq b-1$, $q\leq a-1$.

By \eqref{eq:rank-M-stability}, it is enough to prove
$x^+f_s^{(b)}\in M$ for every ordered product $x^+$ of positive
divided powers. We induct on the number of nontrivial factors in
$x^+$. The case $x^+=1$ is immediate. Write
$x^+=y^+e_r^{(a)}$ with $a>0$. Then
\[
x^+f_s^{(b)}
=(y^+f_s^{(b)})e_r^{(a)}
+y^+[e_r^{(a)},f_s^{(b)}].
\]
The first term belongs to $M$ by the induction on the number of
factors and \eqref{eq:rank-M-stability}. Since both
$e_r^{(a)}f_s^{(b)}$ and $f_s^{(b)}e_r^{(a)}$ belong to $M$, the
commutator belongs to $M$. Write its expansion in the basis
\eqref{eq:rank-basis} as
\[
[e_r^{(a)},f_s^{(b)}]
=\sum_\nu u_\nu^-h_\nu u_\nu^+,
\qquad
h_\nu\in\mathcal H_R.
\]
The root degree estimate above gives
$\deg_{\mathrm{rt}}u_\nu^-<b$. Hence every nontrivial divided power
factor occurring in $u_\nu^-$ has exponent less than $b$.
Starting from $y^+\in M$, the induction hypothesis
\eqref{eq:rank-smaller-right-stability} allows successive right
multiplication by these factors. We obtain
$y^+u_\nu^-\in M$, and \eqref{eq:rank-M-stability} then gives
\[
y^+u_\nu^-h_\nu u_\nu^+\in M.
\]
Thus $y^+[e_r^{(a)},f_s^{(b)}]\in M$, and therefore
$x^+f_s^{(b)}\in M$. This proves $Mf_s^{(b)}\subseteq M$ and
completes the induction on $b$.
\end{proof}

\subsection{The PBW theorem}
\label{subsec:rank-pbw}

It remains to show that the Cartan generators introduced in
Section~\ref{subsec:prelim-cartan} belong to $\mathcal Y_R$.
The zero index identity \eqref{eq:rank-zero-mode} yields the
binomial coefficients in $h_0$, while the basic lemma yields the
$P_n$.

\begin{theorem}\label{thm:rank-pbw}
Multiplication induces an isomorphism of $R$-modules
\begin{equation}\label{eq:rank-integral-triangular}
\mathcal Y_R^{<}\otimes_R\mathcal H_R
\otimes_R\mathcal Y_R^{>}
\xrightarrow{\;\sim\;}\mathcal Y_R.
\end{equation}
In particular, the products \eqref{eq:rank-basis} form an $R$-basis
of $\mathcal Y_R$.
\end{theorem}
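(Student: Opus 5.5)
We will prove that $M=\mathcal Y_R$. Since the products \eqref{eq:rank-basis} already form an $R$-basis of $M$, and the multiplication map from $\mathcal Y_R^{<}\otimes_R\mathcal H_R\otimes_R\mathcal Y_R^{>}$ to $Y$ is injective, this gives \eqref{eq:rank-integral-triangular}. Injectivity holds because the tensor product is free on the tensors of basis elements from Theorem~\ref{thm:prelim-one-sided-pbw} and Lemma~\ref{lem:prelim-cartan-basis}, and these map to the linearly independent products \eqref{eq:rank-basis}, which are independent by \eqref{eq:prelim-rational-triangular}. The proof therefore splits into the inclusions $M\subseteq\mathcal Y_R$ and $\mathcal Y_R\subseteq M$.

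\emph{Step 1: $M$ is a subalgebra containing $\mathcal Y_R$.} By \eqref{eq:rank-M-stability} it suffices to show $M\,\mathcal Y_R^{<}\subseteq M$. Since $\mathcal Y_R^{<}$ is generated by the $f_s^{(b)}$, Proposition~\ref{prop:rank-all-mixed} gives $Mf_s^{(b)}\subseteq M$, and hence $M\mathcal Y_R^{<}\subseteq M$. So $M$ is closed under right multiplication by all generators. Because $1\in M$, this makes $M$ a subalgebra: for $m,m'\in M$, write $m'$ as a sum of products $x^-hx^+$ and use the stability of $M$ on the right under $\mathcal Y_R^{<}$, $\mathcal H_R$ and $\mathcal Y_R^{>}$. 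It contains every $e_r^{(a)}$ and $f_r^{(a)}$, hence $\mathcal Y_R\subseteq M$.

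\emph{Step 2: $\mathcal H_R\subseteq\mathcal Y_R$.} Then $M\subseteq\mathcal Y_R$ follows, since $\mathcal Y_R^{<},\mathcal Y_R^{>}\subseteq\mathcal Y_R$. For the binomials we induct on $b$ using \eqref{eq:rank-zero-mode} with $a=b$. The summand $j=b$ is $\binom{h_0}{b}$, and every other summand $f_0^{(b-j)}\binom{h_0-2b+2j}{j}e_0^{(b-j)}$ has $j<b$. By \eqref{eq:prelim-binomial-translation} these are $\mathbb Z$-combinations of $f_0^{(b-j)}\binom{h_0}{i}e_0^{(b-j)}$ with $i\le j<b$, which lie in $\mathcal Y_R$ by induction. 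Hence $\Bin_R(h_0)\subseteq\mathcal Y_R$.

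For the $P_n$ we use Theorem~\ref{thm:rank-basic} with $a=b=n$ and induct on $n$. The summand $t=n$ equals $(-1)^n[u^{-n}]\bigl(Q(u)B_{h_0-n+1}(u)^{-1}\bigr)$, and by \eqref{eq:rank-Ac-triangular} this is
\[
(-1)^n\Bigl(P_n+\sum_{j=1}^{n}P_{n-j}\gamma_j(h_0-n+1)\Bigr).
\]
The summands with $t<n$ expand as $\sum_{i+j+k=t}D_i^-(n-t)\,c_{t,j}\,E_k^+(n-t)$, where $D^-,E^+$ lie in $\mathcal Y_R^{\lessgtr}\subseteq\mathcal Y_R$ and $c_{t,j}$ is a polynomial in $P_1,\dots,P_j$ with $j\le t<n$ and coefficients in $\Bin_R(h_0)$. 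By the induction hypothesis and \eqref{eq:rank-gamma-binomial}, all of these lie in $\mathcal Y_R$. So do the correction terms $P_{n-j}\gamma_j(h_0-n+1)$ with $j\ge1$, and the left side $e_0^{(n)}f_1^{(n)}$. Hence $P_n\in\mathcal Y_R$; this is the triangularity alluded to as \eqref{eq:QPtriangular}.

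The main obstacle is Step 1, and specifically the fact that $M$ is closed under multiplication. That is exactly the content of Proposition~\ref{prop:rank-all-mixed}, together with the Cartan exchange of Proposition~\ref{prop:rank-cartan-exchange}, so here it reduces to bookkeeping. The only delicate point left is in Step 2: checking that the lower-order Cartan terms involve only $P_j$ with $j<n$ and binomials in $h_0$ with $R$-coefficients, so that the induction closes over $R$ rather than over $\mathbb Q$.
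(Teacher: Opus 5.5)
Your proposal is correct and follows the paper's proof essentially step for step. You get $\mathcal Y_R\subseteq M$ from Proposition~\ref{prop:rank-all-mixed}, \eqref{eq:rank-M-stability} and $1\in M$, and then $\mathcal H_R\subseteq\mathcal Y_R$ by induction: \eqref{eq:rank-zero-mode} with $a=b$ gives the binomials, and Theorem~\ref{thm:rank-basic} with $a=b=n$, isolating the triangular $t=n$ term \eqref{eq:QPtriangular}, gives the $P_n$. The only differences are expository: you state explicitly that $M$ is a subalgebra and why the multiplication map is injective, both of which the paper leaves implicit.
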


\begin{proof}
Proposition~\ref{prop:rank-all-mixed} and
\eqref{eq:rank-M-stability} show that right multiplication by every
generator of $\mathcal Y_R$ preserves $M$. Since $1\in M$, every
product of these generators belongs to $M$, and hence $\mathcal Y_R\subseteq M$. For the reverse inclusion, it remains to prove
$\mathcal H_R\subseteq\mathcal Y_R$. We first show that
\[
\binom{h_0}{n}\in\mathcal Y_R
\qquad(n\in\mathbb N)
\]
by induction on $n$. The case $n=0$ is immediate. Suppose $n>0$
and the assertion holds for all smaller indices. Setting $a=b=n$
in \eqref{eq:rank-zero-mode} gives
\[
e_0^{(n)}f_0^{(n)}
=\sum_{j=0}^{n}
f_0^{(n-j)}
\binom{h_0-2n+2j}{j}
e_0^{(n-j)}.
\]
The term with $j=n$ is $\binom{h_0}{n}$. If $j<n$, then
\eqref{eq:prelim-binomial-translation} expresses
$\binom{h_0-2n+2j}{j}$ as an integral linear combination of
$\binom{h_0}{l}$ with $0\leq l\leq j<n$. By induction, all other
terms lie in $\mathcal Y_R$. Since the left hand side also belongs
to $\mathcal Y_R$, it follows that
$\binom{h_0}{n}\in\mathcal Y_R$. Hence $\Bin_R(h_0)\subseteq\mathcal Y_R$.

We next prove $P_n\in\mathcal Y_R$ by induction on $n$. The case
$P_0=1$ is clear. Let $n\geq1$ and suppose that
$P_j\in\mathcal Y_R$ for $j<n$. By \eqref{eq:inverseBx},
\[
[u^{-m}]\frac{Q(u)}{B_{h_0+c}(u)}
=\sum_{j=0}^{m}P_{m-j}\gamma_j(h_0+c)
\qquad(c\in\mathbb Z,\ m\geq0).
\]
Since $\gamma_j(h_0+c)\in\Bin_R(h_0)$, the displayed coefficient
belongs to $\mathcal Y_R$ whenever $m<n$. Apply \eqref{eq:rank-basic} with $a=b=n$. For $t<n$, the $t$-th
summand is
\[
(-1)^t[u^{-t}]
\left(
F(u)^{(n-t)}
\frac{Q(u)}{B_{h_0-2n+t+1}(u)}
E(u)^{(n-t)}
\right).
\]
Extracting the coefficient of $u^{-t}$ uses only Cartan
coefficients of index at most $t<n$, which belong to
$\mathcal Y_R$ by the preceding arguments. The coefficients of the
other two factors belong to $\mathcal Y_R^{<}$ and
$\mathcal Y_R^{>}$ by \eqref{eq:current-integrality}. Thus every
summand with $t<n$ belongs to $\mathcal Y_R$. For $t=n$, the summand is
\begin{equation}\label{eq:QPtriangular}
(-1)^n[u^{-n}]
\frac{Q(u)}{B_{h_0-n+1}(u)}
=(-1)^n
\left(
P_n+\sum_{j=1}^{n}
P_{n-j}\gamma_j(h_0-n+1)
\right).
\end{equation}
Every term in the sum belongs to $\mathcal Y_R$ by induction. The
left hand side of \eqref{eq:rank-basic} also belongs to
$\mathcal Y_R$, and all terms with $t<n$ have just been shown to
belong to $\mathcal Y_R$. Hence $P_n\in\mathcal Y_R$.

We have proved $\mathcal H_R\subseteq\mathcal Y_R$. Therefore
\[
M=\mathcal Y_R^{<}\mathcal H_R\mathcal Y_R^{>}
\subseteq\mathcal Y_R,
\]
and hence $M=\mathcal Y_R$. The products
\eqref{eq:rank-basis} form an $R$-basis of $M$, so they form an
$R$-basis of $\mathcal Y_R$. They are precisely the images under
multiplication of the tensor products of the chosen bases of
$\mathcal Y_R^{<}$, $\mathcal H_R$, and $\mathcal Y_R^{>}$. This
proves \eqref{eq:rank-integral-triangular}.
\end{proof}

\section{Integral forms in classical type}
\label{sec:classical}

\subsection{The Cartan part and the PBW theorem}
\label{subsec:general-cartan}

We return to $Y(\mathfrak g)$, where $\mathfrak g$ is a classical
simple Lie algebra, with the notation fixed in
Subsection~\ref{subsec:yangian-pbw}. For each $i\in I$, the
assignments
\begin{equation}\label{eq:general-rank-one-embedding}
\iota_i(e_r)=d_i^{-r}e_{i,r},
\qquad
\iota_i(f_r)=d_i^{-r}f_{i,r},
\qquad
\iota_i(h_r)=d_i^{-r}h_{i,r}
\end{equation}
define an embedding
\[
\iota_i\colon
Y(\mathfrak{sl}_2)\longrightarrow Y(\mathfrak g).
\]
Since $d_i\in R^\times$, the image of
$\mathcal Y_R(\mathfrak{sl}_2)$ under $\iota_i$ is precisely the
$R$-subalgebra generated by the divided powers
$e_{i,r}^{(a)}$ and $f_{i,r}^{(a)}$. For $i\in I$, put $H_i(u)
=
1+d_i\sum_{r\geq0}h_{i,r}u^{-r-1}$,  and define
\[
Q_i(u)
=
\iota_i\bigl(Q(u/d_i)\bigr)
=
\sum_{m\geq0}P_{i,m}u^{-m}.
\]
Thus $P_{i,m}=d_i^m\iota_i(P_m)$. 
Applying $\iota_i$ to \eqref{eq:rank-difference}, with the variable
replaced by $u/d_i$, gives
\begin{equation}\label{eq:general-difference}
(1+d_i/u)^{h_{i,0}}Q_i(u+d_i)
=
H_i(u)Q_i(u).
\end{equation}
In particular, comparison of the coefficients of $u^{-m-1}$ gives
\begin{equation}\label{eq:general-P-recursion}
mP_{i,m}
=
\sum_{j=0}^{m-1}
\left(
d_i^{m-j}\binom{h_{i,0}-j}{m+1-j}
-h_{i,m-j}
\right)P_{i,j},
\qquad m\geq1.
\end{equation}
It follows inductively that
\begin{equation}\label{eq:general-P-triangular}
P_{i,m}
=
-\frac{h_{i,m}}{m}
+p_{i,m}(h_{i,0},\ldots,h_{i,m-1}),
\qquad
p_{i,m}\in
\mathbb Q[h_{i,0},\ldots,h_{i,m-1}].
\end{equation}
By Theorem~\ref{thm:rank-pbw} and
\eqref{eq:general-rank-one-embedding},
\[
\binom{h_{i,0}}{b},\ P_{i,m}
\in\mathcal Y_R(\mathfrak g)
\qquad
(i\in I,\ b,m\in\mathbb N).
\]
Define
\[
\mathcal H_R(\mathfrak g)
=
R\left[
\binom{h_{i,0}}{b},P_{i,m}
\ \middle|\
i\in I,\ b\in\mathbb N,\ m\geq1
\right]
\subseteq Y^0(\mathfrak g).
\]
Then $\mathcal H_R(\mathfrak g)
\subseteq\mathcal Y_R(\mathfrak g)$.

\begin{lemma}\label{lem:general-Cartan-basis}
The elements
\begin{equation}\label{eq:general-Cartan-basis}
\left(
\prod_{i\in I}\binom{h_{i,0}}{b_i}
\right)
\left(
\prod_{\substack{i\in I\\m\geq1}}
P_{i,m}^{c_{i,m}}
\right),
\end{equation}
where all exponents belong to $\mathbb N$ and only finitely many
$c_{i,m}$ are nonzero, form an $R$-basis of
$\mathcal H_R(\mathfrak g)$.
\end{lemma}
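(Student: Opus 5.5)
The plan follows the proof of Lemma~\ref{lem:prelim-cartan-basis}, now with several Cartan families at once. \emph{Spanning.} The elements \eqref{eq:general-Cartan-basis} span $\mathcal H_R(\mathfrak g)$ over $R$ because the subalgebra $Y^0(\mathfrak g)$ is commutative by \eqref{eq:prelim-hh}. Hence every monomial in the generators can be rearranged so that the binomials in each $h_{i,0}$ are grouped together. A product $\binom{h_{i,0}}{b}\binom{h_{i,0}}{b'}$ is an integral combination of binomials $\binom{h_{i,0}}{l}$, since $\Bin_{\mathbb Z}(x)$ is a ring.

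\emph{Independence.} By the PBW theorem \eqref{eq:prelim-rational-triangular}, together with the standard description of the Cartan part (cf.\ \cite{Lev,GTL1}), we have $Y^0(\mathfrak g)=\mathbb C[h_{i,r}\mid i\in I,\ r\geq0]$, a polynomial ring in algebraically independent generators. The triangularity \eqref{eq:general-P-triangular} shows that for each fixed $i$ the substitution $h_{i,m}\mapsto P_{i,m}$ ($m\geq1$) is invertible, by induction on $m$. This is because $h_{i,m}=-mP_{i,m}+m\,p_{i,m}(h_{i,0},\ldots,h_{i,m-1})$, and the $h_{i,j}$ with $j<m$ are already polynomials in $h_{i,0},P_{i,1},\ldots,P_{i,m-1}$. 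It therefore gives
\[
\mathbb C[h_{i,r}\mid i\in I,\ r\geq0]=\mathbb C[h_{i,0},P_{i,m}\mid i\in I,\ m\geq1],
\]
so the elements $h_{i,0},P_{i,m}$ are algebraically independent over $\mathbb C$.

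Now suppose some finite $R$-linear combination of the elements \eqref{eq:general-Cartan-basis} vanishes. Substitute formal variables $x_i=h_{i,0}$ and $y_{i,m}=P_{i,m}$; algebraic independence gives the same relation in $\mathbb Q[x_i,y_{i,m}]$. Since the monomials in the $y_{i,m}$ are linearly independent over $\mathbb Q[x_i]$, each coefficient vanishes. That coefficient is an $R$-combination of the products $\prod_i\binom{x_i}{b_i}$, and these products are $\mathbb Q$-linearly independent in $\mathbb Q[x_i\mid i\in I]$: they are tensor products of the bases $\{\binom{x_i}{b}\}_{b\geq0}$ of the one-variable rings, and have distinct leading monomials. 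Hence all coefficients are zero.

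\emph{Main obstacle.} The only nonformal input is that the $h_{i,r}$ are algebraically independent in $Y(\mathfrak g)$, i.e.\ that $Y^0(\mathfrak g)$ is a polynomial algebra on them. I would take this from the PBW theorem for $Y(\mathfrak g)$, exactly as in the rank-one Lemma~\ref{lem:prelim-cartan-basis}. The remaining steps are bookkeeping with triangular substitutions and binomial bases.
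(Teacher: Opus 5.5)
Your proposal is correct and follows essentially the same route as the paper. Spanning comes from commutativity of $Y^0(\mathfrak g)$ and the closure of the binomial rings under multiplication. Independence comes from $Y^0(\mathfrak g)=\mathbb C[h_{i,r}]$, the triangular relation \eqref{eq:general-P-triangular} making the $h_{i,0},P_{i,m}$ algebraically independent, and the products of binomials forming a basis; your explicit inversion $h_{i,m}=-mP_{i,m}+m\,p_{i,m}(h_{i,0},\ldots,h_{i,m-1})$ simply spells out a step the paper states as an equality of generated subalgebras.
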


\begin{proof}
The elements in \eqref{eq:general-Cartan-basis} span $\mathcal H_R(\mathfrak g)$  by its definition.
The PBW theorem for $Y(\mathfrak g)$ gives
\[
Y^0(\mathfrak g)
=
\mathbb C[h_{i,r}\mid i\in I,\ r\in\mathbb N].
\]
For each $i$, the triangular relation
\eqref{eq:general-P-triangular} gives
\[
\mathbb C[h_{i,0},h_{i,1},h_{i,2},\ldots]
=
\mathbb C[h_{i,0},P_{i,1},P_{i,2},\ldots].
\]
Hence the elements
\[
h_{i,0},\quad P_{i,m}
\qquad(i\in I,\ m\geq1)
\]
are algebraically independent. Since
\[
\left\{
\prod_{i\in I}\binom{h_{i,0}}{b_i}
\ \middle|\
b_i\in\mathbb N
\right\}
\]
is an $R$-basis of the product of the binomial algebras in the
variables $h_{i,0}$, the elements
\eqref{eq:general-Cartan-basis} are $R$-linearly independent.
\end{proof}

For a fixed $j\in I$, let $S$ act on the generators of the
$j$-th simple root subalgebra by
\[
Se_{j,r}=e_{j,r+1},
\qquad
Sf_{j,r}=f_{j,r+1}.
\]
The Cartan--root relations give
\begin{equation}\label{eq:general-H-conjugation}
H_i(u)x_{j,r}^{\pm}H_i(u)^{-1}
=
\frac{u-S\pm d_i a_{ij}/2}
     {u-S\mp d_i a_{ij}/2}
x_{j,r}^{\pm}.
\end{equation}

\begin{lemma}\label{lem:general-Q-conjugation}
For $i,j\in I$ and $r\geq0$,
\begin{equation}\label{eq:general-Q-conjugation}
\begin{aligned}
Q_i(u)e_{j,r}Q_i(u)^{-1}
&=
K_{ij}(u,S)e_{j,r},\\
Q_i(u)^{-1}f_{j,r}Q_i(u)
&=
K_{ij}(u,S)f_{j,r},
\end{aligned}
\end{equation}
where
\begin{equation}\label{eq:general-K}
K_{ij}(u,S)
=
\begin{cases}
\displaystyle
\frac{(u-S)(u-S-d_i)}{u^2},
&i=j,\\[8pt]
1,
&a_{ij}=0,\\[6pt]
\displaystyle
\frac{u^k}
{\prod_{\ell=0}^{k-1}
\bigl(u-S+d_i(\ell-k/2)\bigr)},
&a_{ij}=-k<0.
\end{cases}
\end{equation}
\end{lemma}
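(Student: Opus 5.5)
The plan is to repeat the uniqueness argument from the proof of Lemma~\ref{lem:rank-Q-conjugation}, now with the general difference equation \eqref{eq:general-difference}. Set
\[
G_i(u)=(1+d_i/u)^{-h_{i,0}}H_i(u).
\]
Expanding $(1+d_i/u)^{-h_{i,0}}=1-d_ih_{i,0}u^{-1}+O(u^{-2})$ and $H_i(u)=1+d_ih_{i,0}u^{-1}+O(u^{-2})$, we get $G_i(u)=1+O(u^{-2})$, and \eqref{eq:general-difference} becomes $Q_i(u+d_i)=G_i(u)Q_i(u)$.

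Consequently $X(u)=Q_i(u)e_{j,r}Q_i(u)^{-1}$ satisfies
\[
X(u+d_i)=G_i(u)X(u)G_i(u)^{-1},\qquad X(u)=e_{j,r}+O(u^{-1}).
\]
This equation has at most one solution with this leading term. If two solutions differ first by $xu^{-m}$ with $m\ge1$ and $x\ne0$, the shift $u\mapsto u+d_i$ changes their difference by $-md_ixu^{-m-1}+\cdots$. Since $d_i\ne0$, this term is nonzero. Conjugation by $G_i(u)=1+O(u^{-2})$, however, only affects order $u^{-m-2}$, which is a contradiction. So it suffices to check that $\widetilde X(u)=K_{ij}(u,S)e_{j,r}$ satisfies the same equation. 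Its leading term is $e_{j,r}$, since $K_{ij}=1+O(u^{-1})$ in all three cases.

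For the verification, note that $[h_{i,0},\widetilde X(u)]=a_{ij}\widetilde X(u)$. As in the rank one proof, this gives
\[
(1+d_i/u)^{-h_{i,0}}\,\widetilde X\,(1+d_i/u)^{h_{i,0}}=(1+d_i/u)^{-a_{ij}}\widetilde X .
\]
By \eqref{eq:general-H-conjugation}, conjugation by $H_i(u)$ multiplies the operator by
\[
\frac{u-S+d_ia_{ij}/2}{u-S-d_ia_{ij}/2}.
\]
Here I use that $H_i(u)$ commutes with the operator $S$ acting on the $e_{j,\cdot}$, since \eqref{eq:general-H-conjugation} is linear in the root generators. The claim therefore reduces to the scalar rational identity
\[
\Bigl(\frac{u}{u+d_i}\Bigr)^{a_{ij}}\frac{u-S+d_ia_{ij}/2}{u-S-d_ia_{ij}/2}\,K_{ij}(u,S)=K_{ij}(u+d_i,S),
\]
to be checked case by case, with $S$ treated as a commuting variable.

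\begin{itemize}
\item If $i=j$, then $a_{ii}=2$ and $d_ia_{ii}/2=d_i$. The left side is
\[
\frac{u^2}{(u+d_i)^2}\cdot\frac{u-S+d_i}{u-S-d_i}\cdot\frac{(u-S)(u-S-d_i)}{u^2}=\frac{(u-S+d_i)(u-S)}{(u+d_i)^2},
\]
which is $K_{ii}(u+d_i,S)$. This is the rank one computation with $1$ replaced by $d_i$.
\item If $a_{ij}=0$, both sides equal $1$.
\item If $a_{ij}=-k<0$, the left side is
\[
\frac{(u+d_i)^k}{u^k}\cdot\frac{u-S-d_ik/2}{u-S+d_ik/2}\cdot\frac{u^k}{\prod_{\ell=0}^{k-1}(u-S+d_i(\ell-k/2))}.
\]
The denominator product runs over $u-S+d_i(\ell-k/2)$ for $\ell=0,\dots,k-1$. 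Dividing by the factor $u-S+d_ik/2$ and multiplying by $u-S-d_ik/2$ shifts the range to $\ell=1,\dots,k$. After writing $u-S+d_i(\ell-k/2)=(u+d_i)-S+d_i(\ell-1-k/2)$, this is exactly the product for $\ell'=\ell-1=0,\dots,k-1$ evaluated at $u+d_i$. The result is therefore $K_{ij}(u+d_i,S)$.
\end{itemize}

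The telescoping in the case $a_{ij}<0$ is the only real check, and I expect it to be the main place where an index or sign error could creep in.

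The second identity in \eqref{eq:general-Q-conjugation} follows by applying the anti-automorphism $\tau$. It fixes $Q_i(u)$, since it fixes every $h_{i,r}$, and it turns $Q_ie_{j,r}Q_i^{-1}$ into $Q_i^{-1}f_{j,r}Q_i$. Since $\tau$ is linear and intertwines the two shift operators, $K_{ij}(u,S)e_{j,r}$ goes to $K_{ij}(u,S)f_{j,r}$.
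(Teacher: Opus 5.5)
Your proposal is correct and follows the paper's proof essentially verbatim. It uses the same $G_i(u)=(1+d_i/u)^{-h_{i,0}}H_i(u)=1+O(u^{-2})$, the same uniqueness argument for $X(u+d_i)=G_i(u)X(u)G_i(u)^{-1}$, the same reduction to checking $K_{ij}(u+d_i,S)/K_{ij}(u,S)$, and $\tau$ for the second identity. Your case-by-case verification, including the telescoping for $a_{ij}=-k$, correctly fills in the ``direct calculation'' that the paper leaves to the reader.
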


\begin{proof}
Put $G_i(u)
=
(1+d_i/u)^{-h_{i,0}}H_i(u)
=
1+O(u^{-2})$. By \eqref{eq:general-difference}, $Q_i(u+d_i)=G_i(u)Q_i(u)$, and thus $X(u)=Q_i(u)e_{j,r}Q_i(u)^{-1}$ satisfies
\[
X(u+d_i)
=
G_i(u)X(u)G_i(u)^{-1},
\qquad
X(u)=e_{j,r}+O(u^{-1}).
\]
As in the proof of Lemma~\ref{lem:rank-Q-conjugation}, this
difference equation has at most one solution with the prescribed
constant term.  Since $[h_{i,0},e_{j,r}]=a_{ij}e_{j,r}$,  \eqref{eq:general-H-conjugation} gives
\[
G_i(u)e_{j,r}G_i(u)^{-1}
=
(1+d_i/u)^{-a_{ij}}
\frac{u-S+d_i a_{ij}/2}
     {u-S-d_i a_{ij}/2}
e_{j,r}.
\]
A direct calculation from \eqref{eq:general-K} shows that
\[
\frac{K_{ij}(u+d_i,S)}{K_{ij}(u,S)}
=
(1+d_i/u)^{-a_{ij}}
\frac{u-S+d_i a_{ij}/2}
     {u-S-d_i a_{ij}/2}.
\]
Thus $K_{ij}(u,S)e_{j,r}$ satisfies the same difference equation
and has constant term $e_{j,r}$. The uniqueness then proves the first
identity in \eqref{eq:general-Q-conjugation}. By applying the anti-automorphism $\tau$, we obtain the second identity in \eqref{eq:general-Q-conjugation}.
\end{proof}

By the same argument as in the proof of  Proposition \ref{prop:rank-cartan-exchange}, we obtain
\begin{proposition}\label{prop:general-Cartan-exchange}
One has
\begin{equation}\label{eq:general-integral-exchange}
\begin{aligned}
\mathcal H_R(\mathfrak g)\mathcal Y_R^{>}(\mathfrak g)
&=
\mathcal Y_R^{>}(\mathfrak g)\mathcal H_R(\mathfrak g),\\
\mathcal H_R(\mathfrak g)\mathcal Y_R^{<}(\mathfrak g)
&=
\mathcal Y_R^{<}(\mathfrak g)\mathcal H_R(\mathfrak g).
\end{aligned}
\end{equation}
\end{proposition}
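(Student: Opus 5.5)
We mimic Proposition~\ref{prop:rank-cartan-exchange}, checking the two kinds of generators of $\mathcal H_R(\mathfrak g)$ separately against the generators of $\mathcal Y_R^{>}(\mathfrak g)$. Since $\mathcal Y_R^{>}(\mathfrak g)$ is generated by the $e_{j,r}^{(a)}$ and $\mathcal H_R(\mathfrak g)$ is generated by the $\binom{h_{i,0}}{b}$ and $P_{i,m}$, it suffices to show two things. First, conjugation by $Q_i(u)^{\pm1}$ preserves $\mathcal Y_R^{>}(\mathfrak g)[[u^{-1}]]$. Second, moving $\binom{h_{i,0}}{b}$ past an element of fixed weight gives an element of $\Bin_R(h_{i,0})$. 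The negative statement then follows by applying $\tau$, which fixes $\mathcal H_R(\mathfrak g)$ pointwise and interchanges $\mathcal Y_R^{>}(\mathfrak g)$ and $\mathcal Y_R^{<}(\mathfrak g)$.

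\emph{Binomial generators.} Since $[h_{i,0},e_{j,r}]=a_{ij}e_{j,r}$, we have
\[
\binom{h_{i,0}}{b}e_{j,r}^{(a)}=e_{j,r}^{(a)}\binom{h_{i,0}+a a_{ij}}{b},
\]
and \eqref{eq:prelim-binomial-translation} shows that the translated binomial lies in $\Bin_{\mathbb Z}(h_{i,0})$. The same computation works in the other direction. Because these relations are multiplicative in $e_{j,r}^{(a)}$, they extend to all of $\mathcal Y_R^{>}(\mathfrak g)$.

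\emph{The $P_{i,m}$.} By Lemma~\ref{lem:general-Q-conjugation}, conjugation by $Q_i(u)$ acts on the generators $e_{j,r}$ of the $j$-th root subalgebra through the operator $K_{ij}(u,S)$. Set $z=u^{-1}$. We show that $K_{ij}$ and $K_{ij}^{-1}$ factor into pieces of the form $(1-z(S+c))^{\pm1}$, possibly after rescaling. There are three cases.
\begin{itemize}
\item If $i=j$, then $K_{ii}=(1-zS)(1-z(S+d_i))$.
\item If $a_{ij}=0$, then $K_{ij}=1$.
\item If $a_{ij}=-k<0$, then $K_{ij}=\prod_\ell(1-z(S-d_i(\ell-k/2)))^{-1}$.
\end{itemize}
We transport Proposition~\ref{prop:prelim-formal-automorphisms} along the embedding $\iota_j$. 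Since $\iota_j(e_r)=d_j^{-r}e_{j,r}$, the operator $S$ on $e_{j,\bullet}$ corresponds to $d_jS$ on $e_\bullet$. Hence $1-z(S+c)$ on the $j$-th subalgebra is $\iota_j$ of $1-(d_jz)(S+c/d_j)$, that is, $\Phi_{c/d_j,d_jz}$. The constants $c$ that occur are $0$, $d_i$, and $d_i(\ell-k/2)$ with $k\le 2$. These lie in $\tfrac12\mathbb Z$, and $d_j\in\{1/2,1,2\}$, so $c/d_j\in R$ and $d_jz\in R[[z]]$. Therefore each factor, and its inverse, preserves $\iota_j(\mathcal Y_R^{>}(\mathfrak{sl}_2))[[z]]$.

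Now define $\Psi_i$ on $Y(\mathfrak g)[[z]]$ as conjugation by $Q_i(u)$. It is an algebra automorphism. On each simple-root generator $e_{j,r}$ it acts by a composite of such $\Phi$'s, so it maps $(e_{j,r})^{(a)}$ to $\bigl(\Psi_i(e_{j,r})\bigr)^{(a)}$, which lies in $\mathcal Y_R^{>}(\mathfrak g)[[z]]$ by \eqref{eq:prelim-Phi-integral} transported via $\iota_j$. Since $\Psi_i$ is multiplicative and $\mathcal Y_R^{>}(\mathfrak g)$ is generated by these divided powers, $\Psi_i$ preserves $\mathcal Y_R^{>}(\mathfrak g)[[z]]$. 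The same holds for $\Psi_i^{-1}$, which is conjugation by $Q_i(u)^{-1}$, because the inverse factors are of the same form.

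Finally, we compare coefficients exactly as in the rank one proof. For $x\in\mathcal Y_R^{>}(\mathfrak g)$ this gives
\[
P_{i,m}x=\sum_{l\le m} x_lP_{i,m-l}
\quad\text{and}\quad
xP_{i,m}=\sum_{l\le m} P_{i,m-l}y_l,
\]
with $x_l,y_l\in\mathcal Y_R^{>}(\mathfrak g)$. Together with the binomial case, this yields both inclusions of \eqref{eq:general-integral-exchange}.

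The main obstacle is justifying that $\Psi_i$ acts on a divided power $e_{j,r}^{(a)}$ purely through the rank one automorphism of the $j$-th copy. This holds because $\Psi_i$ restricted to $\iota_j(Y^{>}(\mathfrak{sl}_2))$ agrees with $\iota_j$ of a product of the $\Phi_{c,z}$'s on generators, and both sides are algebra homomorphisms. The other point to check is that the rescaled constants $c/d_j$ lie in $R$. This is where inverting $2$ is needed, in types $B$ and $C$.
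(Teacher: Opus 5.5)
Your proof is correct and is essentially the paper's argument. The paper only says that the rank one proof of Proposition~\ref{prop:rank-cartan-exchange} carries over, and you have supplied the details it leaves implicit: the factorization of $K_{ij}(u,S)$ from Lemma~\ref{lem:general-Q-conjugation}, the rescaling $S\leftrightarrow d_jS$, $z\mapsto d_jz$, $c\mapsto c/d_j$ along $\iota_j$, and the coefficient comparison, with $\tau$ replacing the second conjugation identity for the negative half.

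One small correction to a side remark: inverting $2$ is needed in every type, not just $B$ and $C$. Already for $a_{ij}=-1$ with $d_i=d_j=1$ the factor is $\bigl(1-z(S+\tfrac12)\bigr)^{-1}$.
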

Let $M(\mathfrak g)
=
\mathcal Y_R^{<}(\mathfrak g)
\mathcal H_R(\mathfrak g)
\mathcal Y_R^{>}(\mathfrak g)$. By
\eqref{eq:general-integral-exchange},
\begin{equation}\label{eq:general-M-stability}
\mathcal Y_R^{<}(\mathfrak g)M(\mathfrak g)
\subseteq M(\mathfrak g),
\qquad
\mathcal H_R(\mathfrak g)M(\mathfrak g)
\subseteq M(\mathfrak g),
\end{equation}
and
\[
M(\mathfrak g)\mathcal H_R(\mathfrak g)
\subseteq M(\mathfrak g),
\qquad
M(\mathfrak g)\mathcal Y_R^{>}(\mathfrak g)
\subseteq M(\mathfrak g).
\]

\begin{lemma}\label{lem:general-right-stability}
For every $j\in I$ and $s,b\in\mathbb N$,
\[
M(\mathfrak g)f_{j,s}^{(b)}
\subseteq M(\mathfrak g).
\]
\end{lemma}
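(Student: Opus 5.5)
We mimic the last part of the proof of Proposition~\ref{prop:rank-all-mixed}, now in several simple directions. By \eqref{eq:general-M-stability} it suffices to show $x^+f_{j,s}^{(b)}\in M(\mathfrak g)$ for every element $x^+$ of the $R$-basis \eqref{eq:prelim-one-sided-bases} of $\mathcal Y_R^{>}(\mathfrak g)$. However, these basis elements involve non-simple Lyndon root vectors $e_\beta(r)^{(a)}$, whose commutators with $f_{j,s}^{(b)}$ are hard to control. So we use instead that $\mathcal Y_R^{>}(\mathfrak g)$ is generated as an $R$-algebra by the $e_{i,r}^{(a)}$ (by definition). Thus every element of $\mathcal Y_R^{>}(\mathfrak g)$ is an $R$-combination of words $e_{i_1,r_1}^{(a_1)}\cdots e_{i_k,r_k}^{(a_k)}$. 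It suffices to prove
\[
y\,f_{j,s}^{(b)}\in M(\mathfrak g)
\]
for all such words $y$, by induction on the number $k$ of letters, and simultaneously by induction on $b$, assuming $M(\mathfrak g)f_{l,t}^{(c)}\subseteq M(\mathfrak g)$ for all $l\in I$, all $t$, and all $c<b$.

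For the inductive step write $y=y'e_{i,r}^{(a)}$. Then
\[
y\,f_{j,s}^{(b)}=\bigl(y'f_{j,s}^{(b)}\bigr)e_{i,r}^{(a)}+y'\,[e_{i,r}^{(a)},f_{j,s}^{(b)}].
\]
The first term lies in $M(\mathfrak g)$ by induction on $k$ and right $\mathcal Y_R^{>}$-stability. The second term is handled in two cases.

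\emph{Case $i\neq j$.} Here $e_{i,r}$ and $f_{j,s}$ commute by \eqref{eq:prelim-ef}, so the commutator vanishes.

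\emph{Case $i=j$.} Everything lives in $\iota_i(Y(\mathfrak{sl}_2))$. By Theorem~\ref{thm:rank-pbw} and Proposition~\ref{prop:rank-all-mixed}, transported by $\iota_i$ (allowed since $d_i\in R^\times$, with $\iota_i(\mathcal H_R)\subseteq\mathcal H_R(\mathfrak g)$ because $P_{i,m}=d_i^m\iota_i(P_m)$), the commutator is an $R$-combination of terms $u^-hu^+$. Here $u^-$ is an ordered product of $f_{i,t}^{(c)}$, $h\in\iota_i(\mathcal H_R)$, and $u^+$ is a product of $e_{i,t}$ divided powers. The rank one root-degree/weight argument from the proof of Proposition~\ref{prop:rank-all-mixed} gives $\deg_{\mathrm{rt}}u^-\le b-1$, so every exponent $c$ occurring in $u^-$ is $<b$. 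Starting from $y'\in M(\mathfrak g)$ (here $y'\cdot 1$ lies in $M(\mathfrak g)$ trivially), successive right multiplication by these $f_{i,t}^{(c)}$ stays in $M(\mathfrak g)$ by the $b$-induction hypothesis. Then multiplying on the right by $h$ and $u^+$ stays in $M(\mathfrak g)$ by \eqref{eq:general-M-stability}.

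The base case $b=0$ is trivial, and the base case $k=0$ follows from $f_{j,s}^{(b)}\in\mathcal Y_R^{<}(\mathfrak g)\subseteq M(\mathfrak g)$.

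The main point needing care is the reduction from basis elements of $\mathcal Y_R^{>}(\mathfrak g)$ to words in simple divided powers. This is legitimate because $M(\mathfrak g)$ is an $R$-module, so it is closed under $R$-combinations, and the stability statements \eqref{eq:general-M-stability} apply to arbitrary elements of $\mathcal Y_R^{>}(\mathfrak g)$, not just basis vectors. One must also check that the rank one root-degree estimate still holds after applying $\iota_i$. It does, since $\iota_i$ only rescales generators by units of $R$ and preserves both the weight and the root degree.
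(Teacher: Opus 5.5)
Your proposal is correct and follows the paper's proof essentially step for step. Both argue by induction on $b$, then by induction on the number of simple-root divided power factors in $x^+$. The case $i\neq j$ is handled by commutation. In the case $i=j$, the rank-one expansion of $[e_r^{(a)},f_s^{(b)}]$, whose negative parts have exponents $<b$, is transported via $\iota_i$. Your explicit check that $\iota_i(\mathcal H_R)\subseteq\mathcal H_R(\mathfrak g)$, using $d_i\in R^\times$, is a detail the paper leaves implicit.
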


\begin{proof}
We induct on $b$, simultaneously for all $j$ and $s$. The case
$b=0$ is immediate. Suppose $b>0$ and
\[
M(\mathfrak g)f_{j,s}^{(c)}
\subseteq M(\mathfrak g)
\qquad
(j\in I,\ s\in\mathbb N,\ 0\leq c<b).
\]
By \eqref{eq:general-M-stability}, it is enough to prove that $x^+f_{j,s}^{(b)}\in M(\mathfrak g)$ for every word $x^+$ in the simple root positive divided powers.
We induct on the number of factors in such a word. The assertion is clear for $x^+=1$. Write $x^+=y^+e_{i,r}^{(a)}$. If $i\ne j$, then
\[
e_{i,r}^{(a)}f_{j,s}^{(b)}
=
f_{j,s}^{(b)}e_{i,r}^{(a)},
\]
so the induction on the number of factors gives
\[
x^+f_{j,s}^{(b)}
=
(y^+f_{j,s}^{(b)})e_{i,r}^{(a)}
\in M(\mathfrak g).
\]
If $i=j$, applying \eqref{eq:general-rank-one-embedding} to
the rank one calculation in the proof of
Proposition~\ref{prop:rank-all-mixed} gives
\[
e_{i,r}^{(a)}f_{i,s}^{(b)}
=
f_{i,s}^{(b)}e_{i,r}^{(a)}
+
\sum_\nu
u_\nu^-h_\nu u_\nu^+,
\]
where $h_\nu\in\mathcal H_R(\mathfrak g)$, $u_\nu^+\in\mathcal Y_R^{>}(\mathfrak g)$, and every divided power factor occurring in $u_\nu^-$ has exponent
strictly less than $b$. The first term gives
\[
y^+f_{i,s}^{(b)}e_{i,r}^{(a)}
\in M(\mathfrak g)
\]
by the induction on the number of positive factors. For each term
in the sum, the induction on $b$ allows us to multiply $y^+$
successively on the right by the divided power factors of
$u_\nu^-$. Thus $y^+u_\nu^-\in M(\mathfrak g)$, and the remaining Cartan and positive factors preserve
$M(\mathfrak g)$. Hence $y^+u_\nu^-h_\nu u_\nu^+
\in M(\mathfrak g)$. This completes both inductions.
\end{proof}

\begin{theorem}\label{thm:general-pbw}
Multiplication induces an isomorphism of $R$-modules
\[
\mathcal Y_R^{<}(\mathfrak g)
\otimes_R\mathcal H_R(\mathfrak g)
\otimes_R\mathcal Y_R^{>}(\mathfrak g)
\xrightarrow{\;\sim\;}
\mathcal Y_R(\mathfrak g).
\]
More explicitly, the products
\begin{equation}\label{eq:general-PBW-basis}
\left(
\prod_{\beta,r}^{\leftarrow}
f_\beta(r)^{(a_{\beta,r})}
\right)
\left(
\prod_{i\in I}\binom{h_{i,0}}{b_i}
\right)
\left(
\prod_{\substack{i\in I\\m\geq1}}
P_{i,m}^{c_{i,m}}
\right)
\left(
\prod_{\beta,r}^{\rightarrow}
e_\beta(r)^{(d_{\beta,r})}
\right)
\end{equation}
form an $R$-basis of $\mathcal Y_R(\mathfrak g)$, where the order is
the one fixed in \eqref{eq:prelim-root-order} and all exponent
families have finite support.
\end{theorem}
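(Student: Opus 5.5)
The proof follows the rank one argument of Theorem~\ref{thm:rank-pbw}, now with Lemma~\ref{lem:general-right-stability} and Proposition~\ref{prop:general-Cartan-exchange} supplying the needed stability of $M(\mathfrak g)$. We show that $M(\mathfrak g)=\mathcal Y_R(\mathfrak g)$ and that the products \eqref{eq:general-PBW-basis} form an $R$-basis of $M(\mathfrak g)$.

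First we prove $\mathcal Y_R(\mathfrak g)\subseteq M(\mathfrak g)$. The generators of $\mathcal Y_R(\mathfrak g)$ are $e_{i,r}^{(m)}$ and $f_{i,r}^{(m)}$. Right multiplication by $e_{i,r}^{(m)}$ preserves $M(\mathfrak g)$ by \eqref{eq:general-M-stability}, and right multiplication by $f_{i,r}^{(m)}$ preserves it by Lemma~\ref{lem:general-right-stability}. Since $1\in M(\mathfrak g)$, every word in the generators lies in $M(\mathfrak g)$. For the reverse inclusion, $\mathcal Y_R^{<}(\mathfrak g)$ and $\mathcal Y_R^{>}(\mathfrak g)$ lie in $\mathcal Y_R(\mathfrak g)$ by definition. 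We also have $\mathcal H_R(\mathfrak g)\subseteq\mathcal Y_R(\mathfrak g)$, because $\binom{h_{i,0}}{b}$ and $P_{i,m}$ are images under $\iota_i$ of elements of $\mathcal Y_R(\mathfrak{sl}_2)$ by Theorem~\ref{thm:rank-pbw}, up to the unit factors $d_i^m$. Since $\mathcal Y_R(\mathfrak g)$ is an algebra, $M(\mathfrak g)\subseteq\mathcal Y_R(\mathfrak g)$, and hence $M(\mathfrak g)=\mathcal Y_R(\mathfrak g)$.

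It remains to show that multiplication
$\mathcal Y_R^{<}(\mathfrak g)\otimes_R\mathcal H_R(\mathfrak g)\otimes_R\mathcal Y_R^{>}(\mathfrak g)\to M(\mathfrak g)$ is bijective. Surjectivity holds by definition of $M(\mathfrak g)$. For injectivity, Theorem~\ref{thm:prelim-one-sided-pbw} and Lemma~\ref{lem:general-Cartan-basis} give $R$-bases of the three tensor factors, so the products \eqref{eq:general-PBW-basis} span $M(\mathfrak g)$, and it suffices to show they are $R$-linearly independent in $Y(\mathfrak g)$. Tensoring with $\mathbb C$ over $R$ (which is torsion-free, so nothing is lost), the ordered products of $f_\beta(r)^{(a)}$ remain linearly independent in $Y^{<}(\mathfrak g)$, and the Cartan monomials remain linearly independent in $Y^0(\mathfrak g)=\mathbb C[h_{i,r}]$ by the algebraic independence of $h_{i,0},P_{i,m}$ established in Lemma~\ref{lem:general-Cartan-basis}. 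Since the binomials $\binom{h_{i,0}}{b}$ form a triangular basis of $\mathbb Q[h_{i,0}]$, the products in \eqref{eq:general-Cartan-basis} are $\mathbb C$-linearly independent. The rational triangular decomposition \eqref{eq:prelim-rational-triangular} then gives linear independence of the triple products \eqref{eq:general-PBW-basis}.

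The delicate point is this linear independence over $\mathbb C$, in particular that the one-sided bases from Theorem~\ref{thm:prelim-one-sided-pbw} stay independent after extending scalars. To settle it, we would invoke the fact from \cite{CHH} that the corresponding monomials in $e_\beta(r)$ are PBW monomials of $Y^{>}(\mathfrak g)$ over $\mathbb C$, up to nonzero factorials. All the substantive work, namely the stability of $M(\mathfrak g)$ under right multiplication by negative divided powers, is already contained in Lemma~\ref{lem:general-right-stability}.
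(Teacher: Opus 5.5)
Your proposal is correct and follows essentially the same route as the paper. You get $M(\mathfrak g)\subseteq\mathcal Y_R(\mathfrak g)$ from $\mathcal H_R(\mathfrak g)\subseteq\mathcal Y_R(\mathfrak g)$, the reverse inclusion from right stability (Lemma~\ref{lem:general-right-stability} with \eqref{eq:general-M-stability}), and the basis from the one-sided bases, the Cartan basis, and the rational triangular decomposition.

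One small correction: torsion-freeness alone does not keep $R$-linearly independent elements $\mathbb C$-linearly independent inside $Y(\mathfrak g)$. What does the work is the $\mathbb C$-PBW theorem for the ordered monomials in $e_\beta(r)$ and $f_\beta(r)$, together with the algebraic independence of $h_{i,0},P_{i,m}$. You correctly invoke both at the end, and the paper leaves this point implicit.
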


\begin{proof}
By Theorem~\ref{thm:prelim-one-sided-pbw},
Lemma~\ref{lem:general-Cartan-basis}, and the triangular
decomposition \eqref{eq:prelim-rational-triangular}, the products
\eqref{eq:general-PBW-basis} form an $R$-basis of
$M(\mathfrak g)$. Since $\mathcal H_R(\mathfrak g)
\subseteq\mathcal Y_R(\mathfrak g)$, we have $M(\mathfrak g)\subseteq\mathcal Y_R(\mathfrak g)$. Conversely, Lemma~\ref{lem:general-right-stability} and the
inclusions preceding it show that right multiplication by every
generator $e_{i,r}^{(a)}$, $f_{i,r}^{(a)}$ of $\mathcal Y_R(\mathfrak g)$ preserves $M(\mathfrak g)$.
Since $1\in M(\mathfrak g)$, every product of these generators
belongs to $M(\mathfrak g)$. Hence $\mathcal Y_R(\mathfrak g)\subseteq M(\mathfrak g)$. Thus $\mathcal Y_R(\mathfrak g)=M(\mathfrak g)$, and the asserted basis and tensor product decomposition follow.
\end{proof}

\subsection{Relation with Garland's integral form}
\label{subsec:associated-graded}

Equip $Y(\mathfrak g)$ with the loop filtration
\[
\deg e_{i,r}
=
\deg f_{i,r}
=
\deg h_{i,r}
=
r.
\]
For $x\in F_rY(\mathfrak g)$, write
$\operatorname{gr}_r x$ for its image in
$F_rY(\mathfrak g)/F_{r-1}Y(\mathfrak g)$. The PBW theorem gives
an isomorphism
\[
\operatorname{gr}Y(\mathfrak g)
\simeq U(\mathfrak g[t])
\]
under which
\[
\operatorname{gr}_r e_{i,r}
\longmapsto e_i\otimes t^r,
\qquad
\operatorname{gr}_r f_{i,r}
\longmapsto f_i\otimes t^r,
\qquad
\operatorname{gr}_r h_{i,r}
\longmapsto h_i\otimes t^r.
\]

Garland constructed an integral form of the enveloping algebra of
the derived untwisted affine Lie algebra and proved an integral PBW
theorem in \cite[Theorem~5.8]{Gar}. The quotient of that Lie algebra
by its centre is $L\mathfrak g=\mathfrak g[t,t^{-1}]$, see \cite[Theorem~2.1]{Gar}. Let
$U_{\mathbb Z}(L\mathfrak g)$ be the image of Garland's integral
form in $U(L\mathfrak g)$, and set
\[
U_{\mathbb Z}(\mathfrak g[t])
=
U_{\mathbb Z}(L\mathfrak g)
\cap U(\mathfrak g[t]),
\qquad
U_R(\mathfrak g[t])
=
R\otimes_{\mathbb Z}
U_{\mathbb Z}(\mathfrak g[t]).
\]

For $\beta\in\Delta^+$, let $e_\beta\in\mathfrak g_\beta$ and
$f_\beta\in\mathfrak g_{-\beta}$ be the root vectors defined by the
same commutator expressions as the Lyndon root vectors
$e_\beta(0)$ and $f_\beta(0)$. Put
\[
\mathsf e_\beta(r)=e_\beta\otimes t^r,
\qquad
\mathsf f_\beta(r)=f_\beta\otimes t^r,
\qquad
\mathsf h_i(r)=h_i\otimes t^r.
\]
Define the Garland Cartan elements $\Lambda_{i,m}$ by
\begin{equation}\label{eq:Garland-Cartan-series}
\sum_{m\geq0}\Lambda_{i,m}z^m
=
\exp\left(
-\sum_{r\geq1}\frac{\mathsf h_i(r)}{r}z^r
\right),
\qquad
\Lambda_{i,0}=1.
\end{equation}
The current algebra form of Garland's PBW theorem gives, after
extension of scalars to $R$, the basis
\begin{equation}\label{eq:current-PBW-basis}
\left(
\prod_{\beta,r}^{\leftarrow}
\mathsf f_\beta(r)^{(a_{\beta,r})}
\right)
\left(
\prod_{i\in I}\binom{h_i}{b_i}
\right)
\left(
\prod_{\substack{i\in I\\m\geq1}}
\Lambda_{i,m}^{c_{i,m}}
\right)
\left(
\prod_{\beta,r}^{\rightarrow}
\mathsf e_\beta(r)^{(d_{\beta,r})}
\right)
\end{equation}
of $U_R(\mathfrak g[t])$, see also
\cite[Section~1.3]{BMM}.

\begin{proposition}\label{prop:associated-graded}
Give $\mathcal Y_R(\mathfrak g)$ the filtration induced from
$Y(\mathfrak g)$. Under the identification
$\operatorname{gr}Y(\mathfrak g)\simeq U(\mathfrak g[t])$, one has
\[
\operatorname{gr}\mathcal Y_R(\mathfrak g)
=
U_R(\mathfrak g[t]).
\]
Moreover, the classes of the basis elements
\eqref{eq:general-PBW-basis} are precisely the basis elements
\eqref{eq:current-PBW-basis}.
\end{proposition}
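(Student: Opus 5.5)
We will compute the leading symbol of each factor of a basis element \eqref{eq:general-PBW-basis}, and then transfer the basis statement through a standard filtration argument.

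\emph{Leading symbols.} For the root vectors, $e_\beta(r)$ is an iterated commutator of the $e_{i,s}$ whose loop indices sum to $r$, and it lies in $F_r$. Its image in $\operatorname{gr}_r$ is the same iterated commutator in $\mathfrak g[t]$, which is $\mathsf e_\beta(r)$. Hence $\operatorname{gr}_{ra}e_\beta(r)^{(a)}=\mathsf e_\beta(r)^{(a)}$, and applying $\tau$ gives the corresponding statement for $f_\beta(r)^{(a)}$. The element $\binom{h_{i,0}}{b}$ has degree $0$, and its symbol is $\binom{h_i}{b}$. For $P_{i,m}$ we use \eqref{eq:general-difference}. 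Assign $u$ degree $-1$, so that $h_{i,r}u^{-r-1}$ has total degree $-1$. In top degree, the factor $(1+d_i/u)^{h_{i,0}}$ and the shift $u\mapsto u+d_i$ contribute only lower-order terms. The symbol of the identity is then obtained by comparing with the derivative of $Q_i$ rather than with $Q_i$ itself. To make this precise, set $\Lambda(z)=\sum_m\operatorname{gr}_m(P_{i,m})z^m$. From the recursion \eqref{eq:general-P-recursion}, the degree-$m$ part of $mP_{i,m}$ is $-\sum_{j<m}\operatorname{gr}(h_{i,m-j})\operatorname{gr}(P_{i,j})$, because the binomial terms have degree at most $j<m$. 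This says $z\Lambda'(z)=-\bigl(\sum_{r\geq1}\mathsf h_i(r)z^r\bigr)\Lambda(z)$, whose solution is \eqref{eq:Garland-Cartan-series}. Therefore $P_{i,m}\in F_m$ and $\operatorname{gr}_mP_{i,m}=\Lambda_{i,m}$.

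\emph{Passage to $\operatorname{gr}$.} Each element of \eqref{eq:general-PBW-basis} has a total loop degree $N$, and its symbol in $\operatorname{gr}_N$ is the corresponding element of \eqref{eq:current-PBW-basis}. This holds because $\operatorname{gr}$ is multiplicative and $U(\mathfrak g[t])$ is a domain. The elements of \eqref{eq:current-PBW-basis} are linearly independent over $\mathbb Q$; they form an $R$-basis of $U_R(\mathfrak g[t])$, and hence a $\mathbb C$-basis of $U(\mathfrak g[t])$. It follows that the family \eqref{eq:general-PBW-basis} is compatible with the filtration.

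\emph{Main step.} Take $x\in F_N\cap\mathcal Y_R(\mathfrak g)$ and write $x=\sum c_\nu b_\nu$ with $c_\nu\in R$, using Theorem~\ref{thm:general-pbw}. Let $N'$ be the maximum degree of the $b_\nu$ with $c_\nu\neq0$. If $N'>N$, then the degree-$N'$ symbols of those top-degree terms sum to zero, which contradicts their linear independence in $U(\mathfrak g[t])$. So $N'\le N$, and $\operatorname{gr}_N x=\sum_{\deg b_\nu=N}c_\nu\operatorname{gr}b_\nu$ lies in $U_R(\mathfrak g[t])$. Conversely, every $R$-combination of elements of \eqref{eq:current-PBW-basis} is attained in this way. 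This proves the equality $\operatorname{gr}\mathcal Y_R(\mathfrak g)=U_R(\mathfrak g[t])$, together with the matching of bases.

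The main obstacle is the Cartan symbol computation. The factor $(1+d_i/u)^{h_{i,0}}$ and the shift in \eqref{eq:general-difference} must be seen to contribute only in strictly lower degree, which is exactly what the bound on the binomial terms in the recursion \eqref{eq:general-P-recursion} provides. The normalization must also match Garland's $\Lambda_{i,m}$ exactly, including the sign and the factor $1/r$. A secondary point is that the identification of $\mathsf e_\beta$ with Garland's basis elements may require the root vectors to be chosen consistently with \cite{Gar}. Up to sign, they generate the same divided-power lattice, and we accept this as part of the definition of $\mathsf e_\beta$ used above.
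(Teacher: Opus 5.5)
Your proposal is correct and follows essentially the same route as the paper's proof. You take the leading symbols of the root vectors from their commutator definitions, obtain $\operatorname{gr}_mP_{i,m}=\Lambda_{i,m}$ by matching the top-degree part of \eqref{eq:general-P-recursion} with the recursion from differentiating Garland's series, and then use linear independence of the symbols to show that \eqref{eq:general-PBW-basis} is a filtered basis whose top-degree classes are \eqref{eq:current-PBW-basis}. (In your heuristic aside $u$ should have degree $+1$, not $-1$, for $h_{i,r}u^{-r-1}$ to have degree $-1$, but this plays no role since your rigorous step goes through the recursion.)
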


\begin{proof}
The recurrence \eqref{eq:general-P-recursion} shows inductively that $P_{i,m}\in F_mY(\mathfrak g)$. Indeed, the term involving
$\binom{h_{i,0}-j}{m+1-j}P_{i,j}$ has degree at most $j<m$, whereas
$h_{i,m-j}P_{i,j}$ has degree at most $m$. Taking the degree-$m$
part of \eqref{eq:general-P-recursion} gives
\[
m\,\operatorname{gr}_mP_{i,m}
=
-\sum_{r=1}^{m}
\mathsf h_i(r)\,
\operatorname{gr}_{m-r}P_{i,m-r}.
\]
On the other hand, differentiating
\eqref{eq:Garland-Cartan-series} and comparing coefficients gives
\[
m\Lambda_{i,m}
=
-\sum_{r=1}^{m}
\mathsf h_i(r)\Lambda_{i,m-r},
\qquad
\Lambda_{i,0}=1.
\]
Since the two recurrences have the same initial term, induction on
$m$ gives
\[
\operatorname{gr}_mP_{i,m}
=
\Lambda_{i,m}.
\]

By the recursive definitions of the Lyndon root vectors and the
normalization fixed in
\cite[Sections~2.1--2.2]{CHH},
\[
\operatorname{gr}_r e_\beta(r)
=
\mathsf e_\beta(r),
\qquad
\operatorname{gr}_r f_\beta(r)
=
\mathsf f_\beta(r).
\]
Consequently, the highest degree class of every element in
\eqref{eq:general-PBW-basis} is the corresponding element of
\eqref{eq:current-PBW-basis}. In particular, if
\[
D
=
\sum_{\beta,r}r(a_{\beta,r}+d_{\beta,r})
+
\sum_{i,m}mc_{i,m},
\]
then the corresponding Yangian basis element has filtration degree
exactly $D$.

The elements \eqref{eq:current-PBW-basis} are linearly
independent. Thus a nonzero $R$-linear combination of
the Yangian basis elements has degree equal to the
largest degree among its nonzero terms.
It follows that $F_N\mathcal Y_R(\mathfrak g)$ has
an $R$-basis consisting of the basis elements of
degree at most $N$. The classes of those of degree
exactly $N$ form an $R$-basis of $F_N\mathcal Y_R(\mathfrak g)/
 F_{N-1}\mathcal Y_R(\mathfrak g)$. Hence
$\operatorname{gr}\mathcal Y_R(\mathfrak g)
=U_R(\mathfrak g[t])$.
\end{proof}

\begin{corollary}\label{cor:graded-base-change}
Let $\kk$ be a field of characteristic $p>3$. With the
induced filtration,
\[
\operatorname{gr}
\bigl(\kk\otimes_R\mathcal Y_R(\mathfrak g)\bigr)
\simeq
\kk\otimes_RU_R(\mathfrak g[t]).
\]
\end{corollary}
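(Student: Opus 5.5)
The plan is to deduce the statement from the PBW basis of Theorem~\ref{thm:general-pbw} together with the degree analysis in the proof of Proposition~\ref{prop:associated-graded}. Write $\mathcal B$ for the set of basis elements \eqref{eq:general-PBW-basis}, and for $x\in\mathcal B$ let $D(x)$ be its filtration degree. The proof of Proposition~\ref{prop:associated-graded} shows two things:
\begin{itemize}
\item each $F_N\mathcal Y_R(\mathfrak g)=\mathcal Y_R(\mathfrak g)\cap F_NY(\mathfrak g)$ is the free $R$-module on $\{x\in\mathcal B\mid D(x)\le N\}$;
\item the classes $\operatorname{gr}_{D(x)}x$ are the basis elements \eqref{eq:current-PBW-basis} of $U_R(\mathfrak g[t])$.
\end{itemize}
In particular each inclusion $F_{N-1}\mathcal Y_R\subseteq F_N\mathcal Y_R$ is split, since $F_N\mathcal Y_R=F_{N-1}\mathcal Y_R\oplus\bigoplus_{D(x)=N}Rx$, and every quotient $F_N/F_{N-1}$ is free.

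Set $\mathscr Y=\kk\otimes_R\mathcal Y_R(\mathfrak g)$. I take the induced filtration to be $F_N\mathscr Y=$ the image of $\kk\otimes_RF_N\mathcal Y_R(\mathfrak g)$. The key observation is that base change preserves split injections. So each $\kk\otimes F_{N-1}\mathcal Y_R\to\kk\otimes F_N\mathcal Y_R\to\mathscr Y$ is injective, and $F_N\mathscr Y$ is the $\kk$-span of $\{1\otimes x\mid x\in\mathcal B,\ D(x)\le N\}$, with these elements linearly independent because $\mathcal B$ is an $R$-basis of $\mathcal Y_R(\mathfrak g)$. This gives
\[
F_N\mathscr Y/F_{N-1}\mathscr Y\simeq\kk\otimes_R\bigl(F_N\mathcal Y_R/F_{N-1}\mathcal Y_R\bigr),
\]
and summing over $N$ yields a $\kk$-linear isomorphism
\[
\operatorname{gr}\mathscr Y\simeq\kk\otimes_R\operatorname{gr}\mathcal Y_R(\mathfrak g)=\kk\otimes_RU_R(\mathfrak g[t]).
\]
Under this isomorphism the class of $1\otimes x$ corresponds to $1\otimes\operatorname{gr}_{D(x)}x$.

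It remains to check that the isomorphism is multiplicative. The multiplication on $\operatorname{gr}\mathscr Y$ is induced from the maps
\[
F_M\mathcal Y_R\times F_N\mathcal Y_R\to F_{M+N}\mathcal Y_R,
\]
and the identification $\operatorname{gr}\mathcal Y_R\simeq U_R(\mathfrak g[t])$ of Proposition~\ref{prop:associated-graded} is an isomorphism of graded $R$-algebras, being a restriction of $\operatorname{gr}Y(\mathfrak g)\simeq U(\mathfrak g[t])$. Since $\kk\otimes_R-$ is compatible with these bilinear maps and with the quotient maps, the isomorphism is one of graded $\kk$-algebras.

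The only delicate point is the first step: identifying the induced filtration on $\mathscr Y$ with $\kk\otimes_RF_N\mathcal Y_R$. In general $\kk\otimes F_N\to\kk\otimes\mathcal Y_R$ need not be injective, and one must make sure that "induced filtration" means the image filtration. Freeness of the filtration pieces and quotients, which comes from the filtered PBW basis in the proof of Proposition~\ref{prop:associated-graded}, is exactly what resolves this. The hypothesis $p>3$ is not needed for the argument beyond ensuring that $2$ is invertible in $\kk$, so that $\kk$ is an $R$-algebra.
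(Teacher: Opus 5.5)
Your proposal is correct and takes the same approach as the paper. The paper also reads off from the proof of Proposition~\ref{prop:associated-graded} that each filtered piece is spanned by a subset of the basis \eqref{eq:general-PBW-basis}, so the filtration splits into free $R$-modules and taking associated graded commutes with $\kk\otimes_R-$. Your explicit treatment of the image filtration and of multiplicativity fills in details the paper leaves implicit.
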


\begin{proof}
By the proof of Proposition~\ref{prop:associated-graded}, every
filtered piece of $\mathcal Y_R(\mathfrak g)$ is spanned by a subset
of the basis \eqref{eq:general-PBW-basis}. Thus the filtration splits
as a filtration of free $R$-modules, and taking associated graded
commutes with extension of scalars from $R$ to $\kk$.
\end{proof}

\section{Small and restricted Yangians}
\label{sec:small-yangian}

Throughout the remainder of this  paper, let $\mathbbm{k}$ be an algebraically closed field of characteristic $p>3$. 
The positive small Yangian was identified with the positive
restricted Yangian in \cite[Theorem~5.8]{CHH}. In this section, we extend this
identification to the full Yangian. Put
\begin{equation}\label{eq:hyperyangian}
\mathscr Y_\kk(\mathfrak g)
=
\kk\otimes_R\mathcal Y_R(\mathfrak g).
\end{equation}
We call this algebra the \emph{hyper Yangian}, by analogy
with the hyper loop algebras studied in \cite{JM}. The images of the elements \eqref{eq:general-PBW-basis} form a
$\kk$-basis of $\mathscr Y_\kk(\mathfrak g)$. For
$x\in\mathcal Y_R(\mathfrak g)$, write $\overline x=1\otimes x$. Define the small Yangian by
\[
y_\kk(\mathfrak g)
=
\kk\langle
\overline e_{i,r},\overline f_{i,r}
\mid i\in I,\ r\in\mathbb N
\rangle
\subseteq\mathscr Y_\kk(\mathfrak g).
\]
It also contains $\overline h_{i,r}
=
[\overline e_{i,r},\overline f_{i,0}]$.

Let $Y_\kk(\mathfrak g)$ be the $\kk$-algebra defined by the Drinfeld
relations
\eqref{eq:prelim-hh}--\eqref{eq:prelim-serre}, which is the modular Yangian associated to $\mathfrak g$. Reduction of these
relations gives a surjective homomorphism
\begin{equation}\label{eq:small-reduction-map}
\begin{aligned}
\eta\colon Y_\kk(\mathfrak g)&\longrightarrow y_\kk(\mathfrak g),\\
e_{i,r}&\longmapsto\overline e_{i,r},\\
f_{i,r}&\longmapsto\overline f_{i,r},\\
h_{i,r}&\longmapsto\overline h_{i,r}.
\end{aligned}
\end{equation}
We use the same Lyndon root vectors
$e_\beta(r)$ and $f_\beta(r)$ in $Y_\kk(\mathfrak g)$. Since their
divided powers belong to $\mathcal Y_R(\mathfrak g)$,
\begin{equation}\label{eq:small-root-powers}
\eta(e_\beta(r)^p)
=
\overline{p!\,e_\beta(r)^{(p)}}=0,
\qquad
\eta(f_\beta(r)^p)=0.
\end{equation}
We next determine which Cartan elements belong to the kernel of $\eta$. Recall that $H_i(u)
=
1+d_i\sum_{r\geq0}h_{i,r}u^{-r-1}$, and define
\[
N_i(u)
=
\prod_{a\in\mathbb F_p}H_i(u+a d_i),\qquad 
\zeta_{i,r}
=
d_i^{-p}
[u^{-p(r+1)}] N_i(u),\ \  r\in\mathbb N,
\]
where all shifted series are expanded at infinity. These are
analogues of the Cartan series considered in
\cite[Section~6.2]{BT}.

\begin{proposition}\label{prop:small-Cartan-relations}
Every coefficient of $N_i(u)$ is central in $Y_\kk(\mathfrak g)$.
Moreover,
\begin{equation}\label{eq:small-Cartan-leading}
\zeta_{i,0}
=
h_{i,0}^p-h_{i,0},
\qquad
\zeta_{i,r}
=
h_{i,r}^p-h_{i,pr}+\rho_{i,r}
\quad(r\geq1),
\end{equation}
where $\rho_{i,r}$ is a polynomial in the $h_{i,s}$ in which every
monomial has loop degree strictly less
than $pr$.
\end{proposition}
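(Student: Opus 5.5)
The plan has two parts. First, a conjugation computation in $Y_\kk(\mathfrak g)$ shows that $N_i(u)$ commutes with all generators. Second, a factorisation of $N_i(u)$, made formal by a universal argument, gives the leading terms.

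\emph{Centrality.} The coefficients of $N_i(u)$ lie in the commutative subalgebra generated by the $h_{j,s}$, so it suffices to treat $x_{j,r}^{\pm}$. Over $\kk$, formula \eqref{eq:general-H-conjugation} still holds. Its derivation only uses \eqref{eq:prelim-h0x} and \eqref{eq:prelim-hx}, whose coefficients $d_ia_{ij}/2$ lie in $R$. We apply it with $u$ replaced by $u+ad_i$ for each $a\in\mathbb F_p$.

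The operators $x\mapsto G(u,S)x$, for $G$ a power series in $u^{-1}$ with coefficients polynomial in $S$, compose multiplicatively on the family $\{x_{j,s}^\pm\}_s$. Hence conjugation by $N_i(u)$ acts by
\[
\prod_{a\in\mathbb F_p}\frac{u-S+ad_i\pm c}{u-S+ad_i\mp c},\qquad c=d_ia_{ij}/2 .
\]
Since $p$ is odd and $d_i\in\mathbb F_p^\times$, we have $c\in d_i\mathbb F_p$. Thus both $\{ad_i+c\}$ and $\{ad_i-c\}$ equal $d_i\mathbb F_p$. The numerator and denominator products coincide, since both equal $(u-S)^p-d_i^{p-1}(u-S)$ by $\prod_{b\in\mathbb F_p}(w+b)=w^p-w$ after rescaling. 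The operator is therefore $1$, and $N_i(u)$ commutes with $x^{\pm}_{j,r}$.

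\emph{Leading terms.} I would work universally, in the polynomial ring $\Lambda=\kk[h_0,h_1,\dots]$ with commuting variables, and then specialise. Truncating $H(u)$ at $h_{n}$ makes $u^{n+1}H(u)$ a monic polynomial. Formally, in a splitting extension, this gives $H(u)=\prod_t(1+\alpha_t/u)$ with $d_ih_s$ equal to $e_{s+1}(\alpha)$ up to sign conventions. Then
\[
N_i(u)=\prod_t\frac{(u+\alpha_t)^p-d_i^{p-1}(u+\alpha_t)}{u^p-d_i^{p-1}u}
=\prod_t\Bigl(1+\frac{\beta_t}{w}\Bigr),\qquad \beta_t=\alpha_t^p-d_i^{p-1}\alpha_t,\quad w=u^p-d_i^{p-1}u .
\]
Since $w^{-k}=u^{-pk}(1+O(u^{-(p-1)}))$, the coefficient of $u^{-p(r+1)}$ is $e_{r+1}(\beta)$ plus contributions from $e_k(\beta)$ with $k<r+1$.

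Assign $\alpha_t$ loop degree shifted so that $h_s$ has degree $s$. Then $e_{r+1}(\alpha^p)=e_{r+1}(\alpha)^p$ in characteristic $p$, and this gives $d_i^ph_r^p$. The correction terms from $d_i^{p-1}\alpha_t$ and from lower $e_k$ must be tracked in degree. This is the step I expect to be the main obstacle: one has to verify that the only other degree-$pr$ monomial is $-d_i^ph_{pr}$ and that everything else has degree $<pr$.

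I would cross-check this by directly expanding the product and extracting the single-factor contributions. These are
\[
\sum_a d_ih_s[u^{-p(r+1)}](u+ad_i)^{-s-1}.
\]
By $\sum_{a\in\mathbb F_p}a^k=-[\,(p-1)\mid k,\ k>0\,]$ and Lucas' theorem, $\binom{pr+p-1}{p-1}\equiv1$. Hence the only contribution of degree $pr$ is $-d_i^ph_{pr}$, from $k=p-1$. The multi-factor terms of degree $\ge pr$ must then be shown to reduce to $d_i^ph_r^p$. Here the $\beta$-factorisation handles the vanishing of power sums over distinct $a$.

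For $r=0$ the same computation is exact: $e_1(\beta)=e_1(\alpha)^p-d_i^{p-1}e_1(\alpha)$ gives $d_i^p(h_0^p-h_0)$, since $d_i^{p}\equiv d_i$. Dividing by $d_i^p$ yields \eqref{eq:small-Cartan-leading}.
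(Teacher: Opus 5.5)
Your centrality argument is the same as the paper's. Your $r=0$ computation and your extraction of the single-factor terms are also correct: among those, only $-d_i^ph_{pr}$ reaches loop degree $pr$.

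The paper handles the leading terms differently. It substitutes $u=d_iv$, so that $N_i(d_iv)=\prod_{a\in\mathbb F_p}X(v-a)$ with $X^{(m)}=d_i^{1-m}h_{i,m-1}$, and then quotes \cite[Lemma~2.9]{BT}. Your self-contained route could replace that citation, but as written the case $r\geq1$ has a gap exactly where you flag the main obstacle.

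The bookkeeping you propose there does not exist. No degree on the $\alpha_t$ can give $h_s$ degree $s$, because $d_ih_s=e_{s+1}(\alpha)$ has $\alpha$-degree $s+1$. For a monomial $h_{s_1}\cdots h_{s_\ell}$, the loop degree is $W-\ell$, where $W=\sum_j(s_j+1)$ is the $\alpha$-degree (the weight) and $\ell$ is the number of factors. This is not a function of $W$ alone. So nothing in the proposal controls the multi-factor terms. These contain $h_r^p$, and a priori they could include monomials of weight between $pr+2$ and $p(r+1)-1$ with few factors, which would have loop degree $\geq pr$.

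To close the gap, keep weight and length separate. From $N_i(u)=\sum_k e_k(\beta)w^{-k}$ you get
\[
[u^{-p(r+1)}]N_i(u)=\sum_{m\geq0}\binom{r+m}{pm}\,e_{r+1-(p-1)m}(\beta).
\]
Using $d_i^{p-1}=1$, you also have $e_{r+1}(\beta)=\sum_{j=0}^{r+1}(-1)^{r+1-j}m_{(p^j,1^{r+1-j})}(\alpha)$. Now sort the pieces by weight:
\begin{itemize}
\item The only piece of weight $p(r+1)$ is $m_{(p^{r+1})}(\alpha)=e_{r+1}(\alpha)^p=d_i^ph_r^p$.
\item The pieces with $j\leq r$ have weight at most $pr+1$.
\item The terms with $m\geq1$ have weight at most $p(r+2-p)<pr$.
\end{itemize}
Hence every monomial with at least two factors, other than $h_r^p$, has loop degree at most $pr-1$.

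The one-factor part of the coefficient is exactly that of $\sum_{a}(H_i(u+ad_i)-1)$. Your direct computation shows this is $-d_i^ph_{pr}$ plus multiples of $h_s$ with $s<pr$. Dividing by $d_i^p$ gives \eqref{eq:small-Cartan-leading}. With this step added, your argument is a complete elementary proof that avoids \cite{BT}; without it, the formula for $r\geq1$ is only asserted.
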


\begin{proof}
The conjugation formula \eqref{eq:general-H-conjugation} remains
valid after reduction to $\kk$. Hence
\[
N_i(u)x_{j,s}^{\pm}N_i(u)^{-1}
=
\prod_{a\in\mathbb F_p}
\frac{u+d_i(a\pm a_{ij}/2)-S}
     {u+d_i(a\mp a_{ij}/2)-S}
x_{j,s}^{\pm}.
\]
Since translation by $\pm a_{ij}/2$ permutes $\mathbb F_p$, the
factors in the numerator and denominator form the same set. Thus
the product is $1$. Since the coefficients of $N_i(u)$ also commute
with all Cartan generators, they are central.  For \eqref{eq:small-Cartan-leading}, substitute
$u=d_i v$, then
\[
 H_i(d_i v)
 =
 1+\sum_{s\geq0}d_i^{-s}h_{i,s}v^{-s-1}.
\]
We apply \cite[Lemma~2.9]{BT} with $X^{(m)}=d_i^{1-m}h_{i,m-1}$,  and
\[
 \prod_{a\in\mathbb F_p}H_i(d_i(v-a))
 =
 \prod_{a\in\mathbb F_p}H_i(d_i(v+a))
 =
 N_i(d_i v),
\]
Taking the coefficient of $v^{-p(r+1)}$ gives
\[
 [v^{-p(r+1)}]N_i(d_i v)
 =
 d_i^{-pr}(h_{i,r}^p-h_{i,pr})
 +\tilde{\rho}_{i,r},
\]
where $\tilde{\rho}_{i,r}$ is a linear combination
of monomials of loop degree strictly less than $pr$. Hence
\[
 \begin{aligned}
 \zeta_{i,r}=
 d_i^{-p}[u^{-p(r+1)}]N_i(u)=
 h_{i,r}^p-h_{i,pr}+d_i^{pr}\tilde{\rho}_{i,r}.
 \end{aligned}
\]
For $r=0$, again by  \cite[Lemma~2.9]{BT}, we obtain $\zeta_{i,0}=h_{i,0}^p-h_{i,0}$.
\end{proof}

\begin{lemma}\label{lem:small-norm-vanishing}
One has $\eta(N_i(u))=1$. In particular,
\[
\eta(\zeta_{i,r})=0
\qquad(i\in I,\ r\in\mathbb N).
\]
\end{lemma}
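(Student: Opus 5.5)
The plan is to compute the characteristic zero lift of $N_i(u)$ inside $Y(\mathfrak g)$ by telescoping the difference equation \eqref{eq:general-difference}. Then reduce it modulo $p$ using the fact that every coefficient of the result lies in $\mathcal Y_R(\mathfrak g)$.

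\emph{Step 1: lift to characteristic zero.} Represent $\mathbb F_p$ by $a=0,1,\ldots,p-1$ and set
\[
\widetilde N_i(u)=\prod_{a=0}^{p-1}H_i(u+ad_i)\in Y^0(\mathfrak g)[[u^{-1}]].
\]
Each coefficient of $H_i(u+ad_i)$, expanded at infinity, is an $R$-linear combination of the $h_{i,r}$, because $d_i\in R$ and the binomial coefficients are integers. Moreover $h_{i,r}=[e_{i,r},f_{i,0}]\in\mathcal Y_R(\mathfrak g)$. Hence the coefficients of $\widetilde N_i(u)$ lie in $\mathcal Y_R(\mathfrak g)$, and their images in $\mathscr Y_\kk(\mathfrak g)$ are $\eta$ of the corresponding coefficients of $N_i(u)$. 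This uses that $\eta$ sends $h_{i,r}$ to $\overline h_{i,r}$, and that the modular product over $\mathbb F_p$ is the reduction of the product over $a=0,\ldots,p-1$. So it suffices to show that every coefficient of $\widetilde N_i(u)-1$ has image $0$ in $\mathscr Y_\kk(\mathfrak g)$.

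\emph{Step 2: telescoping.} By \eqref{eq:general-difference} we have $H_i(v)=(1+d_i/v)^{h_{i,0}}Q_i(v+d_i)Q_i(v)^{-1}$. All factors lie in the commutative algebra $Y^0(\mathfrak g)[[u^{-1}]]$. Substitute $v=u+ad_i$ and multiply over $a=0,\ldots,p-1$. The $Q_i$ factors telescope, and
\[
\prod_{a=0}^{p-1}\Bigl(1+\frac{d_i}{u+ad_i}\Bigr)=\prod_{a=0}^{p-1}\frac{u+(a+1)d_i}{u+ad_i}=1+\frac{pd_i}{u}.
\]
This gives
\[
\widetilde N_i(u)=\Bigl(1+\frac{pd_i}{u}\Bigr)^{h_{i,0}}Q_i(u+pd_i)\,Q_i(u)^{-1}.
\]
The only point to check here is that the substitution $v\mapsto u+ad_i$ and the product rule for $(1+x)^{h_{i,0}}$ are compatible with expansion at infinity. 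These are identities of formal series with polynomial coefficients in $h_{i,0}$, so they can be checked for $h_{i,0}\in\mathbb N$ and then extended, as in Lemma~\ref{lem:rank-B-properties}.

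\emph{Step 3: reduction.} We now show that each factor reduces to a series whose product is $1$.
\begin{itemize}
\item The first factor is $\sum_m\binom{h_{i,0}}{m}p^md_i^mu^{-m}$. Since $\binom{h_{i,0}}{m}\in\mathcal H_R(\mathfrak g)\subseteq\mathcal Y_R(\mathfrak g)$, every coefficient with $m\geq1$ lies in $p\,\mathcal Y_R(\mathfrak g)$, so this factor reduces to $1$.
\item Expanding $(u+pd_i)^{-m}=u^{-m}\sum_k\binom{-m}{k}(pd_i)^ku^{-k}$ shows that $Q_i(u+pd_i)-Q_i(u)$ has coefficients in $p\,\mathcal Y_R(\mathfrak g)$, because $P_{i,m}\in\mathcal Y_R(\mathfrak g)$.
\item $Q_i(u)^{-1}$ has coefficients in $\mathcal H_R(\mathfrak g)$, since $Q_i$ has constant term $1$.
\end{itemize}
Therefore $\widetilde N_i(u)\equiv Q_i(u)Q_i(u)^{-1}=1$ modulo $p\,\mathcal Y_R(\mathfrak g)[[u^{-1}]]$, so $\eta(N_i(u))=1$. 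Taking the coefficient of $u^{-p(r+1)}$ then gives $\eta(\zeta_{i,r})=0$.

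The main obstacle I expect is Step 1. One must make sure that the modular $N_i(u)$, computed in $Y_\kk(\mathfrak g)$ with $a\in\mathbb F_p$, maps under $\eta$ exactly to the reduction of $\widetilde N_i(u)$. This holds because the expansion coefficients are universal polynomials over $R$ in the $h_{i,r}$ and $d_i$, so the reduction commutes with taking coefficients.
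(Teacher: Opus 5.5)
Your proposal is correct and follows the paper's proof. You multiply \eqref{eq:general-difference} over $u,u+d_i,\ldots,u+(p-1)d_i$ to get $(1+pd_i/u)^{h_{i,0}}Q_i(u+pd_i)Q_i(u)^{-1}$, and then observe that modulo $p$ the first factor becomes $1$ and $Q_i(u+pd_i)$ becomes $Q_i(u)$. Your Step 1 only spells out a point the paper leaves implicit: the coefficients of the lifted product lie in $\mathcal Y_R(\mathfrak g)$ and reduce to those of $\eta(N_i(u))$.
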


\begin{proof}
We first work in the integral form. Multiplying
\eqref{eq:general-difference} at $u, u+d_i, \ldots, u+(p-1)d_i$ gives
\[
\prod_{a=0}^{p-1}H_i(u+a d_i)
=
(1+p d_i/u)^{h_{i,0}}
\frac{Q_i(u+p d_i)}{Q_i(u)}.
\]
All coefficients on the right belong to
$\mathcal H_R(\mathfrak g)$. Reducing coefficientwise modulo $p$,
we have
\[
(1+p d_i/u)^{h_{i,0}}\longmapsto1,\qquad Q_i(u+p d_i)\longmapsto Q_i(u).
\]
Since $Q_i(u)$ has constant coefficient $1$, it is invertible as a
formal series after reduction. The displayed identity therefore
reduces to $\eta(N_i(u))=1$.
\end{proof}

Following \cite{GT}, define the restricted Yangian by
\[
Y_\kk^{[p]}(\mathfrak g)
=
Y_\kk(\mathfrak g)\Big/
\left\langle
e_\beta(r)^p,\,
f_\beta(r)^p,\,
\zeta_{i,r}
\ \middle|\
\beta\in\Delta^+,\ i\in I,\ r\in\mathbb N
\right\rangle.
\]
In types $A,B,D$, the positive root powers are central by
\cite[Theorem~4.6]{CHH}, and the negative ones are central by the
anti-automorphism $\tau$. In type $C$, their centrality is
conjectured in \cite[Remark~4.7]{CHH}. We shall need the following consequence of Garland's Cartan basis.

\begin{lemma}\label{lem:small-Cartan-independence}
In $\kk\otimes_R U_R(\mathfrak g[t])$, the elements
\[
 \prod_{i\in I,r\geq0}
 \mathsf h_i(r)^{b_{i,r}},
 \qquad 0\leq b_{i,r}<p,
\]
with finite support, are linearly independent.
\end{lemma}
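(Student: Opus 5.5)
The plan is to work entirely inside the commutative Cartan part of $U_R(\mathfrak g[t])$, namely $U_R(\mathfrak h[t])$ with the $R$-basis $\prod_i\binom{h_i}{b_i}\prod_{i,m}\Lambda_{i,m}^{c_{i,m}}$ from \eqref{eq:current-PBW-basis}. By the PBW basis \eqref{eq:current-PBW-basis}, the Cartan subalgebra $\kk\otimes_R U_R(\mathfrak h[t])$ embeds in $\kk\otimes_RU_R(\mathfrak g[t])$. It therefore suffices to prove independence there, one index $i$ at a time. Since the algebra is a tensor product over $i$ of rank-one pieces, each with a free basis, I will fix $i$ and work with
\[
A_R=\Bin_R(\mathsf h_i(0))\otimes_R R[\Lambda_{i,1},\Lambda_{i,2},\ldots].
\]

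The $r=0$ factor is separate. Its powers $\mathsf h_i(0)^b$ with $b<p$ lie in $\Bin_{\mathbb Z}(\mathsf h_i(0))$. They are $\mathbb Z_{(p)}$-combinations of $\binom{\mathsf h_i(0)}{c}$ with $c\le b$, and the leading coefficient is $b!$, which is a unit mod $p$. Hence their reductions are independent in $\kk\otimes\Bin_R$, and they span the same space as $\binom{\mathsf h_i(0)}{c}$ for $c<p$.

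For $r\ge1$, invert \eqref{eq:Garland-Cartan-series} to get
\[
-\sum_r \mathsf h_i(r)z^r/r=\log\Big(\sum_m\Lambda_{i,m}z^m\Big).
\]
This gives $\mathsf h_i(r)=-r\Lambda_{i,r}+(\text{polynomial in }\Lambda_{i,1},\dots,\Lambda_{i,r-1})$ with $\mathbb Z$ coefficients. This is Newton's identity: $\mathsf h_i(r)$ plays the role of a power sum $-p_r$, and the $\Lambda_{i,m}$ play the role of elementary symmetric functions up to sign. Thus $\mathsf h_i(r)\in\mathbb Z[\Lambda_{i,1},\dots,\Lambda_{i,r}]$.

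The key issue is that when $p\mid r$, the coefficient $-r$ vanishes mod $p$, so $\mathsf h_i(r)$ need not be a new generator after reduction. So I will not try to show that the $\overline{\mathsf h_i(r)}$ are algebraically independent. Only monomials with exponents $b_{i,r}<p$ are claimed, so it suffices to show that the images of the monomials $\prod_r \mathsf h_i(r)^{b_r}$ ($b_r<p$) are linearly independent in the polynomial ring $\kk[\Lambda_{i,1},\Lambda_{i,2},\ldots]$. By Newton's identities over $\mathbb Z$, the reductions of power sums in $\kk[\text{elementary symmetric functions}]$ satisfy $p_{pr}=p_r^p$. Hence one cannot get more than "restricted" monomials, which matches the hypothesis. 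My first attempt is a leading-term argument. Write each $r=p^k s$ with $p\nmid s$. Then $\overline{\mathsf h_i(r)}=\overline{\mathsf h_i(s)}^{p^k}$, up to sign conventions, and $\overline{\mathsf h_i(s)}=-s\Lambda_{i,s}+\cdots$ with $s$ a unit. Therefore $\{\overline{\mathsf h_i(s)}:p\nmid s\}$, together with the $\Lambda_{i,m}$ for $p\mid m$, is a free generating set of $\kk[\Lambda]$ by triangularity. A monomial $\prod_r\mathsf h_i(r)^{b_r}$ with $b_r<p$ becomes $\prod_{s}\overline{\mathsf h_i(s)}^{\sum_k b_{p^ks}p^k}$. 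By uniqueness of base-$p$ expansions, distinct exponent families give distinct monomials in the free generators, and independence follows.

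The main obstacle is justifying $\overline{\mathsf h_i(pr)}=\overline{\mathsf h_i(r)}^p$ in $\kk\otimes U_R(\mathfrak h[t])$. In $U(\mathfrak h[t])$ these elements are genuinely different, and the identity only holds after expressing both through the $\Lambda$'s and reducing. I would derive it from the symmetric function identity $p_{pr}\equiv p_r^p \pmod p$ in $\mathbb Z[e_1,e_2,\ldots]$. The map $e_m\mapsto(-1)^m\Lambda_{i,m}$, $p_r\mapsto -\mathsf h_i(r)$ is a ring homomorphism, because both sides are determined by the same exponential generating series identity. Two further checks remain. The sign $(-1)^{p}$ is harmless since $p$ is odd. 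Also, only finitely many $\Lambda_{i,m}$ occur in any finite computation.
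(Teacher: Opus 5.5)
Your proposal is correct and follows essentially the same route as the paper. You reduce to the Cartan part of the Garland basis one index $i$ at a time, treat $r=0$ by the triangularity of $\mathsf h_i(0)^b$ against $\binom{\mathsf h_i(0)}{c}$ with diagonal entry $b!$, use Newton's identity to see that the $\mathsf h_i(s)$ with $p\nmid s$ are algebraically independent, derive $\mathsf h_i(pr)=\mathsf h_i(r)^p$ from $p_r^p=p_{pr}$ in symmetric functions mod $p$, and conclude by uniqueness of base-$p$ expansions. One small slip: under $e_m\mapsto(-1)^m\Lambda_{i,m}$ the power sum $p_r$ is sent to $\mathsf h_i(r)$ itself, not to $-\mathsf h_i(r)$, but as you note the sign is immaterial because $p$ is odd.
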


\begin{proof}
By \eqref{eq:current-PBW-basis}, the Cartan part is the tensor
product of the degree zero binomial algebras and $\kk[\Lambda_{i,m}\mid i\in I,\ m\geq1]$. Fix $i$, 
from \eqref{eq:Garland-Cartan-series}, we obtain
\[
 \mathsf h_i(r)
 =
 -r\Lambda_{i,r}
 -\sum_{s=1}^{r-1}
 \mathsf h_i(s)\Lambda_{i,r-s}.
\]
Since the $\Lambda_{i,m}$ are algebraically independent
and $-r$ is invertible when $p\nmid r$,  the elements
\[
 \mathsf h_i(r),\qquad r>0,\quad p\nmid r,
\]
are algebraically independent. To describe their $p$th powers, identify
$\kk[\Lambda_{i,m}\mid m\geq1]$ with the ring of
symmetric functions over $\kk$ by
\[
 \Lambda_{i,m}\longmapsto(-1)^m e_m,
\]
where $e_m$ is the elementary symmetric function.
The preceding recurrence is Newton's identity,
so under this identification $\mathsf h_i(r)$
corresponds to the power sum
$p_r=\sum_\nu x_\nu^r$. In characteristic $p$,
\[
 p_r^p
 =
 \left(\sum_\nu x_\nu^r\right)^p
 =
 \sum_\nu x_\nu^{pr}
 =
 p_{pr}.
\]
Consequently,
\begin{equation}\label{eq:small-current-Cartan}
 \mathsf h_i(pr)=\mathsf h_i(r)^p
 \qquad(r\geq1).
\end{equation}

Every positive integer is uniquely of the form
$p^q n$ with $p\nmid n$. Repeated application of
\eqref{eq:small-current-Cartan} therefore gives
\[
 \prod_{r\geq1}\mathsf h_i(r)^{b_{i,r}}
 =
 \prod_{n\geq1, p\nmid n}
 \mathsf h_i(n)^{
   \sum_{q\geq0}p^q b_{i,p^q n}}.
\]
Since $0\leq b_{i,r}<p$, uniqueness of base-$p$
expansions shows that distinct exponent families
give distinct monomials on the right. Hence the products involving $r>0$ are
linearly independent. For $r=0$, the elements
\[
1,h_i,\ldots,h_i^{p-1}
\]
are linearly independent in the binomial algebra, since their
expressions in
\[
\binom{h_i}{0},\ldots,\binom{h_i}{p-1}
\]
are triangular with diagonal entries
$0!,1!,\ldots,(p-1)!$, all nonzero in $\kk$.
\end{proof}

Finally, we have the following theorem, which extends \cite[Theorem~5.8]{CHH} to the full Yangian:

\begin{theorem}\label{thm:small-restricted}
The homomorphism $\eta$ of  \eqref{eq:small-reduction-map} induces an
isomorphism
\[
Y_\kk^{[p]}(\mathfrak g)
\xrightarrow{\;\sim\;}
y_\kk(\mathfrak g).
\]
The images of the products
\begin{equation}\label{eq:small-PBW-basis}
\left(
\prod_{\beta,r}^{\leftarrow}
f_\beta(r)^{a_{\beta,r}}
\right)
\left(
\prod_{i\in I, r\geq0}
h_{i,r}^{b_{i,r}}
\right)
\left(
\prod_{\beta,r}^{\rightarrow}
e_\beta(r)^{c_{\beta,r}}
\right),
\qquad
0\leq
a_{\beta,r},b_{i,r},c_{\beta,r}<p,
\end{equation}
with finite support, form a $\kk$-basis of both algebras.
\end{theorem}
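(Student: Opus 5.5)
The plan is to prove that $\eta$ factors through $Y_\kk^{[p]}(\mathfrak g)$, that the restricted PBW monomials \eqref{eq:small-PBW-basis} span $Y_\kk^{[p]}(\mathfrak g)$, and that their images are linearly independent in $\mathscr Y_\kk(\mathfrak g)$. Together these give injectivity and the basis statement at once.

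\emph{Step 1: factorization.} By \eqref{eq:small-root-powers} and Lemma~\ref{lem:small-norm-vanishing}, $\eta$ kills every $e_\beta(r)^p$, $f_\beta(r)^p$ and $\zeta_{i,r}$. Hence $\eta$ descends to a surjection $\bar\eta\colon Y_\kk^{[p]}(\mathfrak g)\to y_\kk(\mathfrak g)$.

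\emph{Step 2: spanning.} By the PBW theorem for $Y_\kk(\mathfrak g)$, which is obtained by reducing \eqref{eq:prelim-rational-triangular} and the Lyndon basis of \cite{CHH} over a field of characteristic $p>3$, the ordered monomials $\prod^{\leftarrow}f_\beta(r)^{a}\prod h_{i,r}^{b}\prod^{\rightarrow}e_\beta(r)^{c}$ with arbitrary exponents span $Y_\kk(\mathfrak g)$. I would argue by induction on the loop filtration degree (and, inside it, on total exponent) that modulo the defining ideal every such monomial is a combination of restricted ones.
\begin{itemize}
\item \emph{Root factors.} \cite[Theorem~5.8]{CHH} already shows that the positive restricted Yangian is spanned by restricted ordered monomials; applying $\tau$ gives the same for the negative part. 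So any $e_\beta(r)^c$ with $c\ge p$ inside an ordered positive word can be rewritten modulo the ideal generated by the $e_\beta(r)^p$, and similarly for the negative word. This step holds even in type $C$, where centrality is not known, because the positive statement is all that is used.
\item \emph{Cartan factors.} Use \eqref{eq:small-Cartan-leading}. For $r\ge1$, $h_{i,r}^p\equiv h_{i,pr}-\rho_{i,r}$ modulo $\zeta_{i,r}$, which strictly lowers the exponent sum without raising loop degree. For $r=0$, $h_{i,0}^p\equiv h_{i,0}$.
\end{itemize}
Since $Y^0$ is commutative, these reductions terminate on restricted Cartan monomials. Because the ideal is two-sided, reductions inside each factor of a triangular product are legitimate.

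\emph{Step 3: independence.} This step is the main obstacle. I would use the filtration on $\mathscr Y_\kk(\mathfrak g)$ and Corollary~\ref{cor:graded-base-change}. The leading term of $\eta$ of a restricted monomial of loop degree $D$ is the corresponding monomial in $\kk\otimes_RU_R(\mathfrak g[t])$:
\[
\prod^{\leftarrow}\mathsf f_\beta(r)^{a}\prod\mathsf h_i(r)^{b}\prod^{\rightarrow}\mathsf e_\beta(r)^{c}.
\]
Since $0\le a,c<p$, we have $\mathsf f^{a}=a!\,\mathsf f^{(a)}$ with $a!$ invertible in $\kk$. So these classes equal nonzero multiples of Garland basis vectors \eqref{eq:current-PBW-basis} in the root parts. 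For the Cartan part, Lemma~\ref{lem:small-Cartan-independence} gives independence of the restricted $\mathsf h$-monomials inside the Cartan tensor factor of Garland's basis. By the tensor decomposition \eqref{eq:current-PBW-basis}, the full products are therefore linearly independent in $\operatorname{gr}\mathscr Y_\kk(\mathfrak g)$.

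One subtle point remains: the leading term of a Cartan monomial in the Yangian is $\prod\mathsf h_i(r)^b$, not a single Garland basis element. Independence must therefore come from Lemma~\ref{lem:small-Cartan-independence} combined with freeness of Garland's basis over the Cartan part, and I would check this carefully. A standard leading-term argument then shows that the images under $\eta$ are independent in $\mathscr Y_\kk(\mathfrak g)$.

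Combining the three steps, the spanning set of $Y_\kk^{[p]}(\mathfrak g)$ maps onto an independent set, so $\bar\eta$ is injective. This gives the isomorphism, and both algebras have the stated basis.
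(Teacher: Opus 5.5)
Your proposal follows the paper's proof essentially step for step. The induced surjection comes from \eqref{eq:small-root-powers} and Lemma~\ref{lem:small-norm-vanishing}, spanning of $Y_\kk^{[p]}(\mathfrak g)$ comes from \eqref{eq:small-Cartan-leading}, and independence comes from the leading terms in $\operatorname{gr}\mathscr Y_\kk(\mathfrak g)\simeq\kk\otimes_RU_R(\mathfrak g[t])$ via Lemma~\ref{lem:small-Cartan-independence} together with Garland's basis \eqref{eq:current-PBW-basis}, exactly as you describe.

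Two minor remarks. First, an ordered monomial with some root exponent $\geq p$ already contains $e_\beta(r)^p$ or $f_\beta(r)^p$ as a factor, so it lies in the two-sided ideal outright and \cite[Theorem~5.8]{CHH} is not needed there. Second, the spanning of $Y_\kk(\mathfrak g)$ by ordered monomials should be derived from the defining relations over $\kk$, as the paper does. ``Reducing'' the complex PBW theorem \eqref{eq:prelim-rational-triangular} says nothing directly about an algebra defined by generators and relations over $\kk$.
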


\begin{proof}
By the defining 
relations
\eqref{eq:prelim-hh}--\eqref{eq:prelim-serre}, $Y_\kk(\mathfrak g)$ is spanned by ordered products
of negative root vectors, Cartan generators, and positive root
vectors. Therefore \eqref{eq:small-Cartan-leading} gives that the
products \eqref{eq:small-PBW-basis} span
$Y_\kk^{[p]}(\mathfrak g)$. By \eqref{eq:small-root-powers} and
Lemma~\ref{lem:small-norm-vanishing}, the map $\eta$ induces a
surjection
\[
Y_\kk^{[p]}(\mathfrak g)
\longrightarrow
y_\kk(\mathfrak g).
\]
By Corollary~\ref{cor:graded-base-change}, $\operatorname{gr}\mathscr Y_\kk(\mathfrak g)
\simeq
\kk\otimes_RU_R(\mathfrak g[t])$. Under this identification,
\[
\operatorname{gr}_r\eta(e_\beta(r))
=
\mathsf e_\beta(r),
\qquad
\operatorname{gr}_r\eta(f_\beta(r))
=
\mathsf f_\beta(r),
\qquad
\operatorname{gr}_r\eta(h_{i,r})
=
\mathsf h_i(r).
\]
Lemma~\ref{lem:small-Cartan-independence} gives the linear
independence of the Cartan factors with exponents less than $p$. Together with \eqref{eq:current-PBW-basis}, this
shows that the products
\[
 \left(
 \prod_{\beta,r}^{\leftarrow}
 \mathsf f_\beta(r)^{a_{\beta,r}}
 \right)
 \left(
 \prod_{\substack{i\in I\\r\geq0}}
 \mathsf h_i(r)^{b_{i,r}}
 \right)
 \left(
 \prod_{\beta,r}^{\rightarrow}
 \mathsf e_\beta(r)^{c_{\beta,r}}
 \right),
 \qquad
 0\leq a_{\beta,r},b_{i,r},c_{\beta,r}<p,
\]
are linearly independent in
$\kk\otimes_R U_R(\mathfrak g[t])$.
Under the identification
\[
 \operatorname{gr}\mathscr Y_\kk(\mathfrak g)
 \simeq \kk\otimes_R U_R(\mathfrak g[t]),
\]
these are the classes of the images under $\eta$
of the products \eqref{eq:small-PBW-basis}. Thus the images are linearly independent in
$y_\kk(\mathfrak g)$, this completes the proof.
\end{proof}

Let $y_\kk^{<}(\mathfrak g)$, $y_\kk^0(\mathfrak g)$, and
$y_\kk^{>}(\mathfrak g)$ be the subalgebras generated by the $\overline{f}_{i,r}$, $\overline{h}_{i,r}$, and $\overline{e}_{i,r}$, respectively.

\begin{corollary}\label{cor:small-triangular}
Multiplication induces an isomorphism of $k$-vector spaces
\[
y_\kk^{<}(\mathfrak g)
\otimes_k
y_\kk^0(\mathfrak g)
\otimes_k
y_\kk^{>}(\mathfrak g)
\xrightarrow{\;\sim\;}
y_\kk(\mathfrak g).
\]
\end{corollary}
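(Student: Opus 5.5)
The plan is to deduce the triangular decomposition directly from the basis of Theorem~\ref{thm:small-restricted}. Write $B^-$, $B^0$, $B^+$ for the sets of images in $y_\kk(\mathfrak g)$ of the ordered products
\[
\prod^{\leftarrow}_{\beta,r}\overline f_\beta(r)^{a_{\beta,r}},\qquad
\prod_{i,r}\overline h_{i,r}^{b_{i,r}},\qquad
\prod^{\rightarrow}_{\beta,r}\overline e_\beta(r)^{c_{\beta,r}},
\]
with all exponents less than $p$. By Theorem~\ref{thm:small-restricted}, the products $xhy$ with $x\in B^-$, $h\in B^0$, $y\in B^+$ form a $\kk$-basis of $y_\kk(\mathfrak g)$. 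It therefore suffices to show that $B^-$, $B^0$ and $B^+$ span, and hence are bases of, $y^<_\kk(\mathfrak g)$, $y^0_\kk(\mathfrak g)$ and $y^>_\kk(\mathfrak g)$ respectively. Once this is done, the multiplication map sends a tensor product of bases bijectively onto a basis, so it is an isomorphism.

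Linear independence of each of $B^-$, $B^0$, $B^+$ is immediate, because each is a subset of the full basis: take the other two factors trivial, with all exponents equal to $0$. For spanning, the Cartan part is straightforward. The algebra $y^0_\kk(\mathfrak g)$ is commutative and generated by the $\overline h_{i,r}$, so it is spanned by monomials in these elements. By Lemma~\ref{lem:small-norm-vanishing} the elements $\eta(\zeta_{i,r})$ vanish. Proposition~\ref{prop:small-Cartan-relations} then gives $\overline h_{i,r}^p=\overline h_{i,pr}-\eta(\rho_{i,r})$, where $\rho_{i,r}$ is a Cartan polynomial of lower loop degree, and $\overline h_{i,0}^p=\overline h_{i,0}$. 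Induction on loop degree, followed by the exponent, reduces every monomial to one with all exponents below $p$, using only Cartan elements. Hence $B^0$ spans $y^0_\kk(\mathfrak g)$.

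The positive part is the main step. The algebra $y^>_\kk(\mathfrak g)$ is the image under $\eta$ of the positive subalgebra of $Y_\kk(\mathfrak g)$. That subalgebra is spanned by ordered monomials in the Lyndon root vectors $e_\beta(r)$, by the one-sided PBW straightening, which uses only the positive relations \eqref{eq:prelim-xx} and \eqref{eq:prelim-serre} and therefore stays inside the positive part. By \eqref{eq:small-root-powers}, $\eta(e_\beta(r)^p)=0$, so every ordered monomial containing an exponent $\geq p$ maps to zero, and hence $B^+$ spans. This step can alternatively be taken directly from the positive identification \cite[Theorem~5.8]{CHH}, which already gives exactly this basis of the positive small Yangian. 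The negative part follows by applying the anti-automorphism $\tau$, which preserves $\mathcal Y_R(\mathfrak g)$ and so descends to $y_\kk(\mathfrak g)$.

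The only point that needs care is ensuring that the straightening of positive words is carried out within the positive subalgebra over $\kk$. I would handle this by citing the positive PBW spanning result over $\kk$ from \cite{CHH}, rather than reproving it.
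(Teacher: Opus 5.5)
Your proposal is correct and takes essentially the paper's route. The paper's proof is the single line that the corollary follows immediately from the basis \eqref{eq:small-PBW-basis} of Theorem~\ref{thm:small-restricted}. Your argument makes explicit what that line leaves implicit: the negative, Cartan and positive subfamilies of that basis also span $y_\kk^{<}(\mathfrak g)$, $y_\kk^{0}(\mathfrak g)$ and $y_\kk^{>}(\mathfrak g)$. You get this from $\eta(e_\beta(r)^p)=0$, from $\eta(\zeta_{i,r})=0$ together with the leading terms in \eqref{eq:small-Cartan-leading}, and from $\tau$, and this spanning is exactly what injectivity of the multiplication map requires.
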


\begin{proof}
This follows immediately from the basis
\eqref{eq:small-PBW-basis}.
\end{proof}

\section{Reduction modulo $p$ of highest-weight modules}
\label{sec:rep-lattices}

The integral form constructed above allows us to construct
finite dimensional modules for $\mathscr Y_\kk(\mathfrak g)$
from highest weight modules in characteristic zero. We first recall the highest weight theory of Yangians in
characteristic zero and Drinfeld's classification over
$\mathbb C$. We then extend the required results to arbitrary
fields of characteristic zero and construct stable lattices
over a discrete valuation ring with residue field $\kk$.

For a commutative $R$-algebra $S$, set
\[
\mathcal Y_S=S\otimes_R\mathcal Y_R(\mathfrak g),
\qquad
\mathcal H_S=S\otimes_R\mathcal H_R(\mathfrak g),
\]
and use the superscripts ${<}$ and ${>}$ for the corresponding
negative and positive subalgebras.
By Theorem~\ref{thm:general-pbw}, multiplication gives
\[
\mathcal Y_S^{<}\otimes_S\mathcal H_S
\otimes_S\mathcal Y_S^{>}
\xrightarrow{\;\sim\;}\mathcal Y_S,
\]
and the bases over $R$ give bases over $S$ by extension
of scalars. For a field $\mathbb F$ of characteristic zero, write
\[
Y_{\mathbb F}(\mathfrak g)
=\mathcal Y_{\mathbb F}
=\mathbb F\otimes_R\mathcal Y_R(\mathfrak g),
\]
this is
the Yangian over $\mathbb F$ defined by the Drinfeld
presentation in Subsection~\ref{subsec:yangian-pbw}.
In particular, $Y_{\mathbb C}(\mathfrak g)=Y(\mathfrak g)$.
For $S=\kk$, we have
$\mathcal Y_\kk=\mathscr Y_\kk(\mathfrak g)$,
the hyper Yangian defined in \eqref{eq:hyperyangian}.

\subsection{Highest weight modules in characteristic zero}

Let $\mathbb F$ be a field of characteristic zero, not
necessarily algebraically closed. The generators
$e_{i,0}$, $f_{i,0}$, and $h_{i,0}$ generate a
subalgebra of $Y_{\mathbb F}(\mathfrak g)$ isomorphic
to $U_{\mathbb F}(\mathfrak g)$.
Let $\mathfrak h_{\mathbb F}$ be the span of the
$h_{i,0}$, and set $Q^+=\sum_{i\in I}\mathbb N\alpha_i$. The simple roots satisfy
$\alpha_j(h_{i,0})=a_{ij}$.
For a $Y_{\mathbb F}(\mathfrak g)$-module $V$ and
$\lambda\in\mathfrak h_{\mathbb F}^*$, define
\[
V_\lambda
=
\{v\in V\ |\ 
h_{i,0}v=\lambda(h_{i,0})v
\text{ for all }i\in I\}.
\]
These are the $\mathfrak g$-weight spaces of $V$. A highest vector is a nonzero vector $v$ satisfying
\[
e_{i,r}v=0,
\qquad
h_{i,r}v=\mu_{i,r}v
\qquad(i\in I,\ r\geq0)
\]
for some scalars $\mu_{i,r}\in\mathbb F$.
A module generated by such a vector is called a
highest weight module. Its highest weight is recorded
by the tuple $\boldsymbol\psi=(\psi_i(u))_{i\in I}$,
where
\[
\psi_i(u)
=
1+d_i\sum_{r\geq0}\mu_{i,r}u^{-r-1}.
\]
Thus $H_i(u)v=\psi_i(u)v$. For a given tuple $\boldsymbol\psi$,  the corresponding Verma module (for example, see \cite{BFM}) is
\begin{equation}\label{eq:rep-Verma-definition}
M_{\mathbb F}(\boldsymbol\psi)
=
Y_{\mathbb F}(\mathfrak g)\Big/
\left(
\sum_{i,r}Y_{\mathbb F}(\mathfrak g)e_{i,r}
+
\sum_{i,r}Y_{\mathbb F}(\mathfrak g)
(h_{i,r}-\mu_{i,r})
\right).
\end{equation}
Every highest weight module with highest weight
$\boldsymbol\psi$ is a quotient of
$M_{\mathbb F}(\boldsymbol\psi)$.  Let $v_{\boldsymbol\psi}$ be the image of $1$. By the PBW theorem,
$M_{\mathbb F}(\boldsymbol\psi)$ is identified,
as a vector space, with
$\mathcal Y_{\mathbb F}^{<}v_{\boldsymbol\psi}$.
In particular, the vectors
\[
\left(
\prod_{\beta,r}^{\leftarrow}
f_\beta(r)^{a_{\beta,r}}
\right)v_{\boldsymbol\psi},
\qquad a_{\beta,r}\in\mathbb N,
\]
with finite support form a basis. Let
$\lambda_{\boldsymbol\psi}\in\mathfrak h_{\mathbb F}^*$
be defined by
\[
\lambda_{\boldsymbol\psi}(h_{i,0})=\mu_{i,0}
\qquad(i\in I).
\] 
Regarded as an $\mathfrak{g}$-module, we have
\[
M_{\mathbb F}(\boldsymbol\psi)_{\lambda_{\boldsymbol\psi}}
=\mathbb Fv_{\boldsymbol\psi},
\qquad
\operatorname{wt}M_{\mathbb F}(\boldsymbol\psi)
\subseteq\lambda_{\boldsymbol\psi}-Q^+.
\]
Every submodule is a sum of its $\mathfrak g$-weight
spaces. A proper submodule therefore has zero intersection
with the one dimensional highest weight space
$\mathbb Fv_{\boldsymbol\psi}$.
Hence the sum of all proper submodules is still proper
and is the unique maximal proper submodule.
We denote the resulting simple quotient by
$V_{\mathbb F}(\boldsymbol\psi)$.

We now recall Drinfeld's classification of finite dimensional
simple modules of $Y(\mathfrak g)$ over $\mathbb C$ \cite{D},  see also \cite[Theorem~3.1]{CPD}\cite[Chapter~3]{Mol07}\cite[Theorem~2.9]{TG}.

\begin{theorem*}[Drinfeld]
Every finite dimensional simple $Y(\mathfrak g)$-module
is highest weight. The simple highest weight module
$V_{\mathbb C}(\boldsymbol\psi)$ is finite dimensional
if and only if there are monic polynomials
$D_i\in\mathbb C[u]$ such that
\begin{equation}\label{eq:rep-charzero-highest}
\psi_i(u)=\frac{D_i(u+d_i)}{D_i(u)}
\qquad(i\in I),
\end{equation}
where the quotients are expanded at infinity.
These polynomials are unique.
\end{theorem*}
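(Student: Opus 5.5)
This is Drinfeld's classical theorem over $\mathbb C$. The paper cites \cite{D,CPD,Mol07,TG} for it and will not reprove it, so the plan below follows the standard route adapted to the notation of this section.

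\emph{Step 1: simples are highest weight.} Let $V$ be a finite dimensional simple $Y(\mathfrak g)$-module. Restricting to $U(\mathfrak g)=\langle e_{i,0},f_{i,0},h_{i,0}\rangle$, $V$ decomposes into $\mathfrak g$-weight spaces. Choose a maximal weight $\lambda$. By \eqref{eq:prelim-h0x}, every $e_{i,r}$ raises weight by $\alpha_i$, so $V_\lambda$ is killed by all $e_{i,r}$. The commuting family $h_{i,r}$ preserves $V_\lambda$, hence has a common eigenvector $v$ there. By the triangular decomposition \eqref{eq:prelim-rational-triangular}, $Y(\mathfrak g)v=Y^{<}(\mathfrak g)v$, and by simplicity this is all of $V$. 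So $V$ is highest weight and isomorphic to $V_{\mathbb C}(\boldsymbol\psi)$. Uniqueness of the $D_i$ is immediate, since monic $D_i$ is determined by the rational function $D_i(u+d_i)/D_i(u)$: the roots of $D_i$ are recovered from its zeros and poles modulo the shift.

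\emph{Step 2: necessity.} Fix $i$ and restrict to the rank one subalgebra $\iota_i(Y(\mathfrak{sl}_2))$. The vector $v$ generates a finite dimensional highest weight module for it. For $Y(\mathfrak{sl}_2)$, the basic lemma (Theorem~\ref{thm:rank-basic}) applies. Take $n$ larger than $\mu_{i,0}$, which is a nonnegative integer by $\mathfrak{sl}_2$-theory. Then $f_1^{(n)}v=0$, because $f_1$ lowers the $h_0$-weight. Hence $e_0^{(n)}f_1^{(n)}v=0$, and by \eqref{eq:rank-basic} acting on the highest vector, the $t=n$ term forces $[u^{-n}]\,Q(u)B_{h_0-n+1}(u)^{-1}$ to act by $0$. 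Running this over all $n>\mu_{i,0}$ and using the triangularity \eqref{eq:QPtriangular}, I get that $Q(u)v=q(u)v$ with $u^{\mu_{i,0}}q(u)$ a polynomial. Set $D_i(u)=u^{\mu_{i,0}}q(u)$. The defining difference equation \eqref{eq:general-difference} evaluated on $v$ then gives exactly \eqref{eq:rep-charzero-highest}. The delicate point is getting the degree bound exactly right via the $B$-correction; the factor $(1+d_i/u)^{\mu_{i,0}}$ is precisely what converts $u^{-\deg D_i}D_i$ into the ratio.

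\emph{Step 3: sufficiency.} This is the main obstacle. The standard argument factors each $D_i$ and realizes $V(\boldsymbol\psi)$ as a subquotient of a tensor product of fundamental representations $V(\omega_i)_a$. Their existence in classical types comes from explicit constructions: the evaluation homomorphism in type $A$, and in general Drinfeld's construction via $J(\mathfrak g)$ or the $R$-matrix realization. Highest weights multiply under the coproduct on the highest vector, because $\Delta(h_{i,r})$ agrees with the primitive part modulo terms that kill tensor products of highest vectors. The tensor product of fundamental highest vectors then generates a highest weight submodule of the finite dimensional tensor product with highest weight $\prod D_i(u+d_i)/D_i(u)$. Its simple quotient is $V(\boldsymbol\psi)$, which is therefore finite dimensional. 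I would simply cite \cite[Theorem~3.1]{CPD} for the existence of the fundamental modules rather than reconstruct them.
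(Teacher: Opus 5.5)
Your overall route agrees with what the paper does. The paper does not prove this theorem but cites \cite{D,CPD,Mol07,TG}. The only argument it supplies is the Remark following the statement, which proves necessity in rank one exactly as in your Step~2: apply Theorem~\ref{thm:rank-basic} with $a=b=n>m$ to the highest vector, so that only the $t=n$ summand survives. Transporting this to each $i\in I$ via $\iota_i$ is legitimate. Your Steps~1 and~3 are the standard arguments that the paper leaves to the references.

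The one place where your Step~2, as written, does not yet carry the conclusion is the point you yourself flag as delicate. Triangularity \eqref{eq:QPtriangular} by itself only says that the $n$-th relation expresses $P_nv$ in terms of $P_0v,\ldots,P_{n-1}v$. If some $\gamma_j(m-n+1)$ with $j\geq n-m$ were nonzero, the relation would involve $P_kv$ with $k\leq m$, which are generally nonzero, and you could not conclude $P_nv=0$. The missing input is the explicit evaluation
\[
B_{m-n+1}(u)^{-1}=\prod_{j=1}^{n-m-1}\Bigl(1-\frac{j}{u}\Bigr),
\]
which follows from $B_0(u)=1$ and \eqref{eq:rank-B-shifts-1}. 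This is a polynomial in $u^{-1}$ of degree $n-m-1$. Hence the coefficient of $u^{-n}$ is $P_nv$ plus a combination of $P_jv$ with $m<j<n$ only, and induction on $n$ starting from $n=m+1$ gives $P_nv=0$ for all $n>m$. This is exactly how the paper's Remark finishes.

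Relatedly, $f_1^{(n)}v=0$ for $n>m$ needs more than ``$f_1$ lowers the weight''. You need that the $h_0$-weights of the rank one submodule lie in $m-2\mathbb N$ (by PBW) and also in $[-m,m]$ (by symmetry of finite dimensional $\mathfrak{sl}_2$-weights).

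Finally, in Step~3 the stated reason for multiplicativity is off. If the coproduct of each $h_{i,r}$ were primitive modulo terms killing $v\otimes w$, the highest weights would add, not multiply. What the standard argument actually uses is that the generating series is group-like: the coproduct of $H_i(u)$ is congruent to $H_i(u)\otimes H_i(u)$ modulo terms annihilating tensor products of highest vectors. One also needs that such a tensor product is killed by all $e_{i,r}$. Since the paper also only cites the sufficiency direction, relying on \cite{CPD} there is consistent with it.
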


The polynomials $D_i$ are called the Drinfeld polynomials.
For the reduction modulo $p$ considered in Subsection \ref{subsection:stablelattice}, we will need
finite dimensional highest weight modules over fields of
characteristic zero other than $\mathbb C$.

\begin{remark*}
Theorem~\ref{thm:rank-basic} gives a direct proof of
the necessity of the polynomial condition \eqref{eq:rep-charzero-highest} in rank one.
Let $V$ be a finite dimensional highest weight
$Y(\mathfrak{sl}_2)$-module, generated by $v$, with
$h_0v=mv$.
Finite dimensional $\mathfrak{sl}_2$ representation
theory gives $m\in\mathbb N$.
The PBW theorem implies all weights belong to $m-2\mathbb N$,
and by the  complete reducibility of $\mathfrak{sl}_2$, we further obtain they are in the interval $[-m,m]$.
Hence $f_1^{(n)}v=0$ for $n>m$.

Apply \eqref{eq:rank-basic} with $a=b=n>m$ to $v$.
All summands with $t<n$ vanish because their positive
factors annihilate $v$, so
\[
 0=
 [u^{-n}]
 \left(
 \frac{Q(u)}{B_{m-n+1}(u)}
 \right)v.
\]
By \eqref{eq:rank-B-shifts-1} and $B_0(u)=1$,
\[
 B_{m-n+1}(u)^{-1}
 =
 \prod_{j=1}^{n-m-1}(1-j/u).
\]
Thus the preceding coefficient is $P_nv$ plus a
linear combination of $P_jv$ with $m<j<n$.
Induction on $n$, starting at $n=m+1$, gives
$P_nv=0$ for every $n>m$.

Consequently, there is a monic polynomial
$D(u)\in\mathbb C[u]$ of degree $m$ such that
\[
 Q(u)v=u^{-m}D(u)v.
\]
Equation~\eqref{eq:rank-difference} now gives
\[
 H(u)v=\frac{D(u+1)}{D(u)}v.
\]
This proves the necessary polynomial condition
directly from the Drinfeld presentation.
\end{remark*}

\subsection{Contravariant forms and extension of scalars}

To apply Drinfeld's theorem over an arbitrary field of
characteristic zero, we first show that simple highest weight
modules commute with extension of scalars.
We use a contravariant form whose radical is the maximal
proper submodule of the Verma module, which is the Yangian analogue of
the Shapovalov form for semisimple Lie algebras \cite{Sha}.

Let $\mathbb F$ be a field of characteristic zero, and let
\[
\epsilon^{<}:\mathcal Y_{\mathbb F}^{<}\longrightarrow\mathbb F,
\qquad
\epsilon^{>}:\mathcal Y_{\mathbb F}^{>}\longrightarrow\mathbb F
\]
be the augmentations sending all negative and positive
generators, respectively, to zero. The triangular decomposition
defines an $\mathbb F$-linear projection
\[
\HC:Y_{\mathbb F}(\mathfrak g)\longrightarrow
\mathcal H_{\mathbb F},
\qquad
\HC(x^-hx^+)
=
\epsilon^{<}(x^-)h\epsilon^{>}(x^+).
\]
This is the Harish--Chandra projection associated with the
triangular decomposition.

Fix a highest weight $\boldsymbol\psi$, and let
$\chi_{\boldsymbol\psi}:\mathcal H_{\mathbb F}\to\mathbb F$
be the character determined by
\[
\chi_{\boldsymbol\psi}(h_{i,r})=\mu_{i,r}
\qquad(i\in I,\ r\geq0).
\]
Let
$\ell\in\operatorname{Hom}_{\mathbb F}(M_{\mathbb F}(\boldsymbol\psi),\mathbb F)$
be the linear functional taking value $1$ on $v_{\boldsymbol\psi}$ and
vanishing on all other $\mathfrak g$-weight spaces. It follows that
\begin{equation}\label{eq:rep-highest-coefficient}
\ell(a v_{\boldsymbol\psi})=\chi_{\boldsymbol\psi}(\HC(a)),
\qquad a\in Y_{\mathbb F}(\mathfrak g).
\end{equation}

\begin{lemma}\label{lem:rep-contravariant-form}
The Verma module $M_{\mathbb F}(\boldsymbol\psi)$ admits a
unique bilinear form $B$ satisfying
\begin{equation}\label{eq:rep-contravariance}
B(v_{\boldsymbol\psi},v_{\boldsymbol\psi})=1,
\qquad
B(aw,z)=B(w,\tau(a)z)
\end{equation}
for all $a\in Y_{\mathbb F}(\mathfrak g)$ and
$w,z\in M_{\mathbb F}(\boldsymbol\psi)$.
The form is symmetric, and
\begin{equation}\label{eq:rep-HC-pairing}
B(xv_{\boldsymbol\psi},zv_{\boldsymbol\psi})
=
\chi_{\boldsymbol\psi}
\bigl(\HC(\tau(x)z)\bigr)
\qquad(x,z\in\mathcal Y_{\mathbb F}^{<}).
\end{equation}
Its radical is the unique maximal proper submodule of
$M_{\mathbb F}(\boldsymbol\psi)$. For every extension $\mathbb{E}/\mathbb F$ of characteristic zero
fields, there is an isomorphism
\begin{equation}\label{eq:rep-scalar-extension}
\mathbb{E}\otimes_{\mathbb F}V_{\mathbb F}(\boldsymbol\psi)
\simeq V_{\mathbb{E}}(\boldsymbol\psi).
\end{equation}
\end{lemma}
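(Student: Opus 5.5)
We will construct $B$ directly through the formula \eqref{eq:rep-HC-pairing} and then verify its properties. By the PBW identification $M_{\mathbb F}(\boldsymbol\psi)=\mathcal Y_{\mathbb F}^{<}v_{\boldsymbol\psi}$, with $x\mapsto xv_{\boldsymbol\psi}$ a linear isomorphism, the rule
\[
B(xv_{\boldsymbol\psi},zv_{\boldsymbol\psi})=\chi_{\boldsymbol\psi}\bigl(\HC(\tau(x)z)\bigr)
\]
is well defined and bilinear. By \eqref{eq:rep-highest-coefficient}, it can be rewritten as $B(w,zv_{\boldsymbol\psi})=\ell(\tau(x)zv_{\boldsymbol\psi})$ for $w=xv_{\boldsymbol\psi}$.

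\textbf{Contravariance.} We first extend the formula above: for every $a\in Y_{\mathbb F}(\mathfrak g)$ we claim $B(w,zv_{\boldsymbol\psi})=\ell(\tau(a)\,z v_{\boldsymbol\psi})$ whenever $w=av_{\boldsymbol\psi}$. To see this, write $a=\sum x^-hx^+$. Then $av_{\boldsymbol\psi}=\sum \epsilon^>(x^+)\chi_{\boldsymbol\psi}(h)x^-v_{\boldsymbol\psi}$. On the other side, $\tau(a)=\sum\tau(x^+)\tau(h)\tau(x^-)$, where $\tau(x^+)$ is negative and $\tau(h)=h$, since $\tau$ fixes the $h_{i,r}$ and $Y^0$ is commutative. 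Applying $\ell$ to $\tau(a)zv_{\boldsymbol\psi}$, the leftmost negative factor $\tau(x^+)$ lowers the weight unless it is a scalar, so it contributes only $\epsilon^>(x^+)$. The Cartan factor then acts by the scalar $\chi_{\boldsymbol\psi}(h)$ on the weight component that survives. This yields the claim.

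Contravariance now follows: $B(aw,z)=\ell(\tau(a'')\dots)$, using $\tau(ab)=\tau(b)\tau(a)$. In detail, if $w=bv_{\boldsymbol\psi}$ then $B(abv_{\boldsymbol\psi},z)=\ell(\tau(b)\tau(a)z)=B(w,\tau(a)z)$.

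\textbf{Symmetry and uniqueness.} Symmetry follows because $\ell(\tau(x)zv)=\chi(\HC(\tau(x)z))$, and $\HC\circ\tau=\HC$. This holds since $\tau$ swaps the positive and negative parts, reverses the order of products, and fixes $\mathcal H_{\mathbb F}$ (commutative, with $\tau(P_{i,m})=P_{i,m}$ and $\tau\binom{h}{b}=\binom hb$). Hence $\chi(\HC(\tau(x)z))=\chi(\HC(\tau(z)x))$. Uniqueness is immediate: contravariance forces $B(xv,zv)=B(v,\tau(x)zv)$, and $B(v,\cdot)$ must vanish off the highest weight space. This last point holds because $B(v,yv)=B(\tau(y)v,v)=0$ for negative $y$ of nonzero weight, since $\tau(y)$ is positive and kills $v$.

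\textbf{Radical.} The radical $J$ is a submodule by contravariance, and it is proper because $B(v,v)=1$. Conversely, let $N$ be a proper submodule and $w\in N$. For any $z$, $B(w,zv)=B(\tau(z)w,v)=\ell(\tau(z)w)$. Since $\tau(z)w\in N$ and $N\cap\mathbb Fv=0$, this projection vanishes, as $N$ is weight-graded. Hence $N\subseteq J$, so $J$ is the maximal proper submodule.

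\textbf{Extension of scalars.} The Verma module and the form are defined over $\mathbb F$ compatibly with base change. Indeed, $M_{\mathbb E}=\mathbb E\otimes M_{\mathbb F}$ via the PBW basis, and the Gram matrices on each (finite dimensional) weight space of $M_{\mathbb F}$ are given by the $\mathbb F$-valued formula above. The radical of $B_{\mathbb E}$ is weightwise the kernel of the same matrices, so it equals $\mathbb E\otimes J_{\mathbb F}$. Quotienting gives \eqref{eq:rep-scalar-extension}.

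The only delicate point is the finite dimensionality of the weight spaces of the Verma module. Over the Yangian these are infinite dimensional, because the loop index $r$ is unbounded. This is harmless, however, since taking the kernel of a bilinear form commutes with field extension even when the weight spaces are infinite dimensional: the kernel is determined by linear conditions with coefficients in $\mathbb F$. I expect this base-change step, together with the $\tau$-invariance of $\HC$, to be the main points requiring care.
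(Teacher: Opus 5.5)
Your proof is correct. Apart from how the form is produced, it follows the paper closely: symmetry via $\HC\circ\tau=\HC$, the radical argument and the base change are the same in substance. The difference lies in existence and contravariance.

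\textbf{The paper's route.} It makes $\operatorname{Hom}_{\mathbb F}(M,\mathbb F)$, with $M=M_{\mathbb F}(\boldsymbol\psi)$, into a module via $(a\varphi)(w)=\varphi(\tau(a)w)$. It checks that $\ell$ is a highest vector of highest weight $\boldsymbol\psi$ there. The universal property of the Verma module then gives a module map $\Phi$ with $\Phi(v_{\boldsymbol\psi})=\ell$. Setting $B(w,z)=\Phi(w)(z)$, contravariance is automatic, and \eqref{eq:rep-HC-pairing} is read off in one line.

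\textbf{Your route.} You take \eqref{eq:rep-HC-pairing} as the definition and verify contravariance by hand through the triangular decomposition. Your key claim, $B(av_{\boldsymbol\psi},y)=\ell(\tau(a)y)$ for all $a\in Y_{\mathbb F}(\mathfrak g)$ and $y\in M$, says exactly that $av_{\boldsymbol\psi}\mapsto a\ell$ is well defined. So you are in effect reproving the universal property in this instance. This is more elementary but longer; the paper gets the same conclusion for free.

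\textbf{Uniqueness.} You use $B(v,yv)=B(\tau(y)v,v)=0$ for $y$ in the augmentation ideal of $\mathcal Y_{\mathbb F}^{<}$. The paper instead uses orthogonality of distinct weight spaces. Both work.

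\textbf{Base change.} Discard the Gram-matrix phrasing on ``finite dimensional'' weight spaces, which is false, as you note yourself. The clean version of your closing remark is the paper's argument. Write $w=\sum_j a_j\otimes w_j\in\operatorname{rad}B_{\mathbb E}$ with the $a_j\in\mathbb E$ linearly independent over $\mathbb F$. Pair with $1\otimes z$ for $z\in M$ to get $\sum_j a_jB(w_j,z)=0$. Since each $B(w_j,z)$ lies in $\mathbb F$, this forces $B(w_j,z)=0$ for all $j$. This needs neither finiteness nor the weight decomposition.
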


\begin{proof}
Write $M=M_{\mathbb F}(\boldsymbol\psi)$ and
$v=v_{\boldsymbol\psi}$. Define a left $Y_{\mathbb F}(\mathfrak g)$-module
structure on $\operatorname{Hom}_{\mathbb F}(M,\mathbb F)$
by
\[
 (a\varphi)(w)=\varphi(\tau(a)w).
\]
For every $w\in M$, we have
\[
(e_{i,r}\ell)(w)=\ell(f_{i,r}w)=0,
\qquad
(h_{i,r}\ell)(w)
=\ell(h_{i,r}w)=\mu_{i,r}\ell(w).
\]
The first equality follows because $f_{i,r}$ lowers weights
and $M$ has no weights above $\lambda_{\boldsymbol\psi}$.
For the second, $h_{i,r}$ preserves weight spaces and acts
on $\mathbb Fv$ by $\mu_{i,r}$.
Thus $\ell$ is a highest vector in $\operatorname{Hom}_{\mathbb F}(M,\mathbb F)$ with highest series $\boldsymbol\psi$. By \eqref{eq:rep-Verma-definition}, there is a
module homomorphism
\[
\Phi:M\longrightarrow
\operatorname{Hom}_{\mathbb F}(M,\mathbb F),
\qquad
\Phi(v)=\ell.
\]
Define
\[
B(w,z)=\Phi(w)(z).
\]
Then $B(v,v)=1$, and \eqref{eq:rep-contravariance} follows because $\Phi$ is a module homomorphism. For
$x,z\in\mathcal Y_{\mathbb F}^{<}$,
\eqref{eq:rep-highest-coefficient} yields
\[
B(xv,zv)=\ell(\tau(x)zv)=\chi_{\boldsymbol\psi}
  \bigl(\HC(\tau(x)z)\bigr),
\]
which proves \eqref{eq:rep-HC-pairing}.

Suppose that $B'$ also satisfies
\eqref{eq:rep-contravariance}. Since
$\tau(h_{i,0})=h_{i,0}$, distinct $\mathfrak g$-weight spaces are
orthogonal with respect to $B'$. Hence
\[
B'(v,M_\mu)=0
\qquad(\mu\ne\lambda_{\boldsymbol\psi}).
\]
Together with $B'(v,v)=1$, this gives $B'(v,-)=\ell$.
Consequently, for $x,z\in\mathcal Y_{\mathbb F}^{<}$,
\[
B'(xv,zv)
=\ell(\tau(x)zv)=B(xv,zv).
\]
Since $M=\mathcal Y_{\mathbb F}^{<}v$, the form is unique. The definition of $\tau$ and the triangular decomposition
give $\HC\circ\tau=\HC$. Therefore,
\[
B(xv,zv)=\chi_{\boldsymbol\psi}
  \bigl(\HC(\tau(x)z)\bigr)=\chi_{\boldsymbol\psi}
  \bigl(\HC(\tau(z)x)\bigr)=B(zv,xv),
\]
so $B$ is symmetric. Let
\[
\operatorname{rad}B
=\{w\in M\ |\ B(w,M)=0\}.
\]
By the second equation of \eqref{eq:rep-contravariance}, $\operatorname{rad}B$ is a submodule,
and $B(v,v)=1$ shows that it is proper. Let $N$ be any proper submodule of $M$. By the weight space
decomposition in the preceding subsection, $N$ has zero intersection with 
highest weight space. Hence $\ell(N)=0$.
For $w\in N$ and $x\in\mathcal Y_{\mathbb F}^{<}$,
we have $\tau(x)w\in N$, so
\[
B(xv,w)=\ell(\tau(x)w)=0.
\]
Since $M=\mathcal Y_{\mathbb F}^{<}v$ and $B$ is symmetric,
\[
N\subseteq\operatorname{rad}B.
\]
Thus $\operatorname{rad}B$ is the unique maximal proper
submodule, and $B$ induces a nondegenerate form on the simple quotient 
$V_{\mathbb F}(\boldsymbol\psi)$. Now let $\mathbb{E}/\mathbb F$ be a field extension. By the PBW theorem,
\[
M_\mathbb{E}(\boldsymbol\psi)
\simeq \mathbb{E}\otimes_{\mathbb F}M.
\]
Under this identification, the scalar extension $B_\mathbb{E}$
of $B$ satisfies \eqref{eq:rep-contravariance}, with $B_\mathbb{E}(v,v)=1$. We claim that
\[
\operatorname{rad}B_\mathbb{E}
=
\mathbb{E}\otimes_{\mathbb F}\operatorname{rad}B.
\]
Clearly, $\mathbb{E}\otimes_{\mathbb F}\operatorname{rad}B \subset \operatorname{rad}B_\mathbb{E}$. 
Conversely, let
\[
w=\sum_{j=1}^s a_j\otimes w_j
\in\operatorname{rad}B_\mathbb{E},
\]
where the $a_j\in \mathbb{E}$ are linearly independent over
$\mathbb F$. For every $z\in M$,
\[
0=B_\mathbb{E}(w,1\otimes z)
=\sum_{j=1}^s a_jB(w_j,z).
\]
Since $B(w_j,z)\in\mathbb F$, linear independence gives
\[
B(w_j,z)=0
\qquad(1\leq j\leq s,\ z\in M).
\]
Hence $w_j\in\operatorname{rad}B$ for every $j$,
proving the claim. It follows that
\[
\mathbb{E}\otimes_{\mathbb F}V_{\mathbb F}(\boldsymbol\psi)\simeq
\frac{\mathbb{E}\otimes_{\mathbb F}M}
     {\mathbb{E}\otimes_{\mathbb F}\operatorname{rad}B}\simeq
\frac{M_\mathbb{E}(\boldsymbol\psi)}
     {\operatorname{rad}B_\mathbb{E}}=V_\mathbb{E}(\boldsymbol\psi),
\]
which proves \eqref{eq:rep-scalar-extension}.
\end{proof}

\begin{corollary}\label{cor:rep-charzero-polynomials}
Let $\mathbb F$ be a field of characteristic zero, and let
$\mathbf D=(D_i)_{i\in I}$ be a tuple of monic polynomials
in $\mathbb F[u]$.
The simple highest weight module with highest series
\[
 \psi_i(u)=\frac{D_i(u+d_i)}{D_i(u)}
 \qquad(i\in I)
\]
is finite dimensional and absolutely simple.
We denote it by $V_{\mathbb F}(\mathbf D)$.
\end{corollary}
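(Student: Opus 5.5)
The plan is to reduce to the algebraically closed case via Lemma~\ref{lem:rep-contravariant-form} and then to $\mathbb C$ via a transfer argument. Let $\overline{\mathbb F}$ be an algebraic closure of $\mathbb F$. By \eqref{eq:rep-scalar-extension},
\[
\overline{\mathbb F}\otimes_{\mathbb F}V_{\mathbb F}(\mathbf D)\simeq V_{\overline{\mathbb F}}(\mathbf D),
\]
and the same isomorphism holds for every further extension $\mathbb E\supseteq\mathbb F$. Hence finite dimensionality over $\mathbb F$ is equivalent to finite dimensionality over $\overline{\mathbb F}$. Absolute simplicity then follows once we know that $V_{\mathbb E}(\boldsymbol\psi)$ is simple for every $\mathbb E$, which holds by definition as the simple quotient. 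So it suffices to prove that $V_{\mathbb E}(\mathbf D)$ is finite dimensional for a single algebraically closed $\mathbb E\supseteq\mathbb F$.

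Next I pass to $\mathbb C$. The coefficients of all $D_i$ generate a finitely generated field $\mathbb F_0\subseteq\mathbb F$ over $\mathbb Q$, and $\mathbb F_0$ embeds into $\mathbb C$. Since $\mathbb F_0$ is defined by the coefficients, the highest series $\boldsymbol\psi$ is defined over $\mathbb F_0$. Applying \eqref{eq:rep-scalar-extension} twice, to $\mathbb F_0\subseteq\mathbb F$ and to $\mathbb F_0\subseteq\mathbb C$, gives
\[
\dim_{\mathbb F}V_{\mathbb F}(\boldsymbol\psi)=\dim_{\mathbb F_0}V_{\mathbb F_0}(\boldsymbol\psi)=\dim_{\mathbb C}V_{\mathbb C}(\boldsymbol\psi).
\]
By Drinfeld's theorem, the right hand side is finite because $\psi_i(u)=D_i(u+d_i)/D_i(u)$ with $D_i\in\mathbb C[u]$ monic. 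This proves finite dimensionality.

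I should check that the module structures really correspond under base change. Here $Y_{\mathbb E}(\mathfrak g)=\mathbb E\otimes_R\mathcal Y_R(\mathfrak g)$, and the Verma module is compatible with extension of scalars by the PBW basis; this is exactly what Lemma~\ref{lem:rep-contravariant-form} encodes.

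The only potential obstacle is a subtle one. We need $V_{\mathbb C}(\boldsymbol\psi)$ to be the module of Drinfeld's theorem, that is, the simple quotient of the Verma module attached to the same $\mu_{i,r}$. This holds because both constructions are the quotient by the unique maximal submodule. No other difficulty is expected.
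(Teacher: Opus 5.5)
Your proof is correct and follows the paper's argument: you descend to the finitely generated field $\mathbb F_0$ of coefficients, embed it in $\mathbb C$, and apply \eqref{eq:rep-scalar-extension} twice together with Drinfeld's theorem (for the Drinfeld polynomials $\sigma(D_i)$) to get finite dimensionality; absolute simplicity then comes from the same scalar-extension isomorphism. Your opening reduction to an algebraically closed extension of $\mathbb F$ is never used afterwards, and $\mathbb C$ need not contain $\mathbb F$, so that paragraph can be dropped.
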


\begin{proof}
Let $\mathbb F_0\subseteq\mathbb F$ be the field generated
over $\mathbb Q$ by the coefficients of the $D_i$.
It is finitely generated over $\mathbb Q$, so it admits
an embedding $\sigma\colon \mathbb F_0\hookrightarrow\mathbb C$.
The highest series $\boldsymbol\psi$ have coefficients
in $\mathbb F_0$, and
\eqref{eq:rep-scalar-extension} gives
\[
 \mathbb C\otimes_{\sigma(\mathbb F_0)}
 V_{\mathbb F_0}(\boldsymbol\psi)
 \simeq
 V_{\mathbb C}(\sigma(\boldsymbol\psi)).
\]
The right hand side is finite dimensional by Drinfeld's
theorem, with Drinfeld polynomials $\sigma(D_i)$.
Since extension of scalars preserves the dimension of
a vector space, $V_{\mathbb F_0}(\boldsymbol\psi)$ is
finite dimensional. Applying
\eqref{eq:rep-scalar-extension} once more gives
\[
 V_{\mathbb F}(\boldsymbol\psi)
 \simeq
 \mathbb F\otimes_{\mathbb F_0}
 V_{\mathbb F_0}(\boldsymbol\psi),
\]
so this module is also finite dimensional and it remains simple after
extension of fields.
\end{proof}

Let $v$ be a highest vector of $V_{\mathbb F}(\mathbf D)$
and put $m_i=\deg D_i$.
Since
\[
 \frac{D_i(u+d_i)}{D_i(u)}
 =
 1+m_id_i u^{-1}+O(u^{-2}),
\]
we have $h_{i,0}v=m_iv$.
Thus the $\mathfrak g$-weight of $v$ is
$\lambda=\sum_{i\in I}m_i\omega_i$.
The weight decomposition of the Verma module gives
\[
 V_{\mathbb F}(\mathbf D)_\lambda=\mathbb Fv,
 \qquad
 \operatorname{wt}V_{\mathbb F}(\mathbf D)
 \subseteq\lambda-Q^+.
\]
In particular, every weight takes integer values on
the $h_{i,0}$.

\subsection{Lifting Drinfeld polynomials}

We now lift polynomial parameters over $\kk$ to characteristic
zero. Put
\[
 \mathcal O=W(\kk),
 \qquad
 \mathbb K=\operatorname{Frac}(\mathcal O),
\]
where $W(\kk)$ is the ring of  Witt vectors.
Since $\kk$ is perfect, $\mathcal O$ is a complete discrete
valuation ring of characteristic zero, with maximal ideal $p\mathcal O$, and
 $\mathcal O/p\mathcal O\simeq\kk$, 
see \cite[II.6]{SerreLF}.
In particular, every element of $\kk$ has a lift to
$\mathcal O$.
Such valuation rings are also used in the reduction of
representations of hyper loop algebras
\cite[Section~4]{JM}. Since $2$ is invertible in $\mathcal O$, the inclusion
$\mathbb Z\hookrightarrow\mathcal O$ extends to
$R\hookrightarrow\mathcal O$.
The notation introduced at the beginning of this section gives
\[
 \mathbb K\otimes_{\mathcal O}\mathcal Y_{\mathcal O}
 \simeq Y_{\mathbb K}(\mathfrak g),
 \qquad
 \kk\otimes_{\mathcal O}\mathcal Y_{\mathcal O}
 \simeq\mathscr Y_\kk(\mathfrak g).
\]
Let $\mathbf D=(D_i)_{i\in I}$ be a tuple of monic
polynomials in $\kk[u]$, with
\[
 D_i(u)=u^{m_i}+a_{i,1}u^{m_i-1}+\cdots+a_{i,m_i}.
\]
Choose lifts $\widetilde a_{i,j}\in\mathcal O$ of
their coefficients and set
\[
 \widetilde D_i(u)
 =
 u^{m_i}+\widetilde a_{i,1}u^{m_i-1}
 +\cdots+\widetilde a_{i,m_i}.
\]
Then $\widetilde D_i$ is monic of degree $m_i$ and
has image $D_i$ under the residue map
$\mathcal O[u]\to\kk[u]$. By Corollary~\ref{cor:rep-charzero-polynomials}, the tuple
$\widetilde{\mathbf D}=(\widetilde D_i)_{i\in I}$
determines a finite dimensional absolutely simple module
\[
 V=V_{\mathbb K}(\widetilde{\mathbf D}).
\]
Choose a highest vector $v$. Then
\begin{equation}\label{eq:rep-highest-series}
 e_{i,r}v=0,
 \qquad
 H_i(u)v=
 \frac{\widetilde D_i(u+d_i)}
      {\widetilde D_i(u)}v.
\end{equation}
Its $\mathfrak g$-weight is
$\lambda=\sum_{i\in I}m_i\omega_i$.
Let
\[
 \chi:\mathcal H_{\mathbb K}\longrightarrow\mathbb K
\]
be the character describing the Cartan action on
$\mathbb K v$.
To construct a lattice using the integral form, we first
compute $\chi$ on the generators of $\mathcal H_{\mathcal O}$.

\begin{lemma}\label{lem:rep-Cartan-character}
For $i\in I$ and $b\geq0$, one has
\begin{equation}\label{eq:rep-Cartan-character}
 \chi\left(\binom{h_{i,0}}b\right)=\binom{m_i}b,
 \qquad
 \chi(Q_i(u))=u^{-m_i}\widetilde D_i(u).
\end{equation}
Consequently,
$\chi(\mathcal H_{\mathcal O})\subseteq\mathcal O$.
\end{lemma}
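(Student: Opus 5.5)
The plan is to compute $\chi$ on the two families of generators separately, and then use that $\chi$ is a ring homomorphism.

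For the binomial elements, recall that $h_{i,0}v=m_iv$, which follows from the expansion $\widetilde D_i(u+d_i)/\widetilde D_i(u)=1+m_id_iu^{-1}+O(u^{-2})$ recorded after Corollary~\ref{cor:rep-charzero-polynomials}. Since $\binom{h_{i,0}}{b}$ is a polynomial in $h_{i,0}$, it acts on $v$ by $\binom{m_i}{b}$. This is an integer, hence lies in $\mathcal O$.

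For $Q_i(u)$, I would apply the defining difference equation \eqref{eq:general-difference} to $v$. Write $q(u)=\chi(Q_i(u))\in 1+u^{-1}\mathbb K[[u^{-1}]]$. Since every coefficient of $Q_i$ lies in $\mathcal H_{\mathbb K}$ and acts on $\mathbb Kv$ by a scalar, we get
\[
(1+d_i/u)^{m_i}\,q(u+d_i)=\frac{\widetilde D_i(u+d_i)}{\widetilde D_i(u)}\,q(u).
\]
Set $g(u)=u^{-m_i}\widetilde D_i(u)$, which has constant term $1$ because $\widetilde D_i$ is monic. Then $(1+d_i/u)^{m_i}=(u+d_i)^{m_i}u^{-m_i}$, so
\[
(1+d_i/u)^{m_i}g(u+d_i)=u^{-m_i}\widetilde D_i(u+d_i)=\frac{\widetilde D_i(u+d_i)}{\widetilde D_i(u)}\,g(u),
\]
and $g$ satisfies the same equation.

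The key step, which I expect to be the only real point, is uniqueness. The solution of this scalar difference equation with constant term $1$ is unique. Uniqueness follows from the recursion \eqref{eq:general-P-recursion} specialized via $\chi$: comparing coefficients of $u^{-m-1}$ gives $m\,q_m$ expressed in terms of $q_0,\dots,q_{m-1}$, and $m$ is invertible in $\mathbb K$. Alternatively, one can use the argument of Lemma~\ref{lem:rank-Q-conjugation}: the difference $q-g$, if nonzero, begins with $cu^{-m}$; the shift changes this term by $-mcd_iu^{-m-1}$, while the remaining factors are $1+O(u^{-2})$ relative to each other. Hence $q=g$, which gives $\chi(Q_i(u))=u^{-m_i}\widetilde D_i(u)$.

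It follows that $\chi(P_{i,m})=\widetilde a_{i,m}$ for $m\le m_i$ and $\chi(P_{i,m})=0$ for $m>m_i$. All of these lie in $\mathcal O$. Since $\mathcal H_{\mathcal O}$ is generated as an $\mathcal O$-algebra by the $\binom{h_{i,0}}{b}$ and the $P_{i,m}$, and $\chi$ restricts to a ring homomorphism, we conclude $\chi(\mathcal H_{\mathcal O})\subseteq\mathcal O$.
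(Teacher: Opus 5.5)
Your proposal is correct and follows the paper's proof essentially step for step. You evaluate the binomials at $\chi(h_{i,0})=m_i$, apply $\chi$ to \eqref{eq:general-difference}, and check that $u^{-m_i}\widetilde D_i(u)$ solves the same scalar equation with constant term $1$. You then conclude by uniqueness from the recursion \eqref{eq:general-P-recursion} in characteristic zero, and read off integrality from the coefficients. Your alternative leading-term uniqueness argument, modelled on Lemma~\ref{lem:rank-Q-conjugation}, is also valid: the scalar equation is linear, and $(1+d_i/u)^{-m_i}\widetilde D_i(u+d_i)/\widetilde D_i(u)=1+O(u^{-2})$.
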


\begin{proof}
Since $\chi(h_{i,0})=m_i$, the first identity follows
by evaluating the binomial polynomial at $m_i$. Applying $\chi$ to \eqref{eq:general-difference}, we obtain
\[
 (1+d_i/u)^{m_i}\chi(Q_i(u+d_i))
 =
 \frac{\widetilde D_i(u+d_i)}
      {\widetilde D_i(u)}\chi(Q_i(u)).
\]
On the other hand, the series
$q_i(u)=u^{-m_i}\widetilde D_i(u)$ also satisfies
\[
 (1+d_i/u)^{m_i}q_i(u+d_i)=
 u^{-m_i}\widetilde D_i(u+d_i)=
 \frac{\widetilde D_i(u+d_i)}
      {\widetilde D_i(u)}q_i(u)
\]
and has constant coefficient $1$. Since $\mathbb K$ has characteristic zero,
the recurrence \eqref{eq:general-P-recursion}
determines such a series uniquely. Hence
\[
 \chi(Q_i(u))=u^{-m_i}\widetilde D_i(u).
\]
Its coefficients belong to $\mathcal O$, so
$\chi(P_{i,j})\in\mathcal O$ for all $j\geq1$.
Together with $\binom{m_i}b\in\mathbb Z\subseteq\mathcal O$,
this gives $\chi(\mathcal H_{\mathcal O})\subseteq\mathcal O$.
\end{proof}

It follows that the coefficients of the lifted Drinfeld polynomials
are exactly the eigenvalues of the integral Cartan
generators $P_{i,m}$. In particular, $\mathcal H_{\mathcal O}v=\mathcal O v$.
We use this fact in the next subsection to construct
a $\mathcal Y_{\mathcal O}$-stable lattice in $V$.

\subsection{The cyclic lattice and its reduction}\label{subsection:stablelattice}

An $\mathcal O$-lattice in a finite dimensional
$\mathbb K$-vector space $V$ is a finite free
$\mathcal O$-submodule whose $\mathbb K$-span is $V$.
It is called stable if it is preserved by
$\mathcal Y_{\mathcal O}$.
For a stable lattice $\mathcal L$, the quotient
$\mathcal L/p\mathcal L$ is naturally a module for
the hyper Yangian $\mathscr Y_\kk(\mathfrak g)$.

\begin{proposition}\label{prop:rep-stable-lattice}
The module
\[
 \mathcal L(\widetilde{\mathbf D})
 =
 \mathcal Y_{\mathcal O}v
 =
 \mathcal Y_{\mathcal O}^{<}v
\]
is a stable $\mathcal O$-lattice in
$V=V_{\mathbb K}(\widetilde{\mathbf D})$.
Its quotient
\[
 \overline{\mathcal L}
 =
 \mathcal L(\widetilde{\mathbf D})/
 p\mathcal L(\widetilde{\mathbf D})
\]
is a finite dimensional
$\mathscr Y_\kk(\mathfrak g)$-module generated by the
nonzero highest vector $\overline v$, with
\begin{equation}\label{eq:rep-reduced-character}
 \binom{h_{i,0}}b\overline v
 =
 \binom{m_i}b\overline v,
 \qquad
 Q_i(u)\overline v
 =
 u^{-m_i}D_i(u)\overline v.
\end{equation}
Here $\binom{m_i}b$ is viewed as an element of $\kk$.
Moreover, $ \dim_\kk\overline{\mathcal L}
 =
 \dim_{\mathbb K}V$.
\end{proposition}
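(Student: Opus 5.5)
First I will show $\mathcal Y_{\mathcal O}v=\mathcal Y_{\mathcal O}^{<}v$. By Theorem~\ref{thm:general-pbw} (base changed to $\mathcal O$), every element of $\mathcal Y_{\mathcal O}$ is an $\mathcal O$-combination of products $x^-hx^+$ with $x^-\in\mathcal Y^{<}_{\mathcal O}$, $h$ a basis monomial of $\mathcal H_{\mathcal O}$ and $x^+$ an ordered product of positive divided powers. If $x^+$ has a nontrivial factor, then it has positive root degree and kills $v$, since $v$ is a highest weight vector and $V$ has no weights above $\lambda$. Otherwise $hv=\chi(h)v\in\mathcal Ov$ by Lemma~\ref{lem:rep-Cartan-character}. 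Hence $\mathcal Y_{\mathcal O}v=\mathcal Y^{<}_{\mathcal O}v$, and this module is visibly stable under $\mathcal Y_{\mathcal O}$.

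Next I will prove that $\mathcal L=\mathcal Y^{<}_{\mathcal O}v$ is a lattice. It spans $V$ over $\mathbb K$, because $\mathbb K\mathcal L=Y_{\mathbb K}v=V$. Finite generation is the main point, since $\mathcal O$ is a DVR and a finitely generated torsion-free module is free. Here I will use the contravariant form. $V=V_{\mathbb K}(\widetilde{\mathbf D})$ is the quotient of the Verma module by $\operatorname{rad}B$, so $B$ descends to a nondegenerate symmetric form on $V$ by Lemma~\ref{lem:rep-contravariant-form}. For $x,z\in\mathcal Y^{<}_{\mathcal O}$, \eqref{eq:rep-HC-pairing} gives $B(xv,zv)=\chi(\HC(\tau(x)z))$. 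Since $\tau$ preserves the integral form, $\tau(x)z\in\mathcal Y_{\mathcal O}$, and $\HC$ maps $\mathcal Y_{\mathcal O}$ into $\mathcal H_{\mathcal O}$ by the integral triangular decomposition. Lemma~\ref{lem:rep-Cartan-character} then gives $B(\mathcal L,\mathcal L)\subseteq\mathcal O$.

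With this integrality I will finish the lattice argument. Choose a $\mathbb K$-basis $w_1,\dots,w_N$ of $V$ inside $\mathcal L$, possible because $\mathcal L$ spans $V$. Then $\mathcal L\subseteq\{w: B(w,w_j)\in\mathcal O\ \forall j\}$. By nondegeneracy, this set is the dual lattice of $\bigoplus\mathcal O w_j$, which is finite free. So $\mathcal L$ is a submodule of a finite free module over the Noetherian ring $\mathcal O$, and is therefore finite free of rank $N=\dim V$. Consequently $\dim_\kk\overline{\mathcal L}=\dim_{\mathbb K}V$. One could also argue weight space by weight space, but the global form argument suffices.

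Finally, $\overline{\mathcal L}$ is a $\kk\otimes_{\mathcal O}\mathcal Y_{\mathcal O}=\mathscr Y_\kk(\mathfrak g)$-module generated by $\overline v$. I must check $\overline v\neq0$, that is, $v\notin p\mathcal L$. The weight-$\lambda$ part of $\mathcal L$ is $\mathcal Ov$: only the root-degree zero part of $\mathcal Y^{<}_{\mathcal O}$ contributes there, and that part is $\mathcal O$. Since $\mathcal L$ is the direct sum of its weight components (the basis elements are weight vectors), $v\notin p\mathcal L$. Each $e_{i,r}^{(a)}$ with $a>0$ kills $\overline v$. The formulas \eqref{eq:rep-reduced-character} follow by reducing Lemma~\ref{lem:rep-Cartan-character} modulo $p$, using that $\widetilde D_i$ reduces to $D_i$. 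The step I expect to need the most care is the integrality $\HC(\mathcal Y_{\mathcal O})\subseteq\mathcal H_{\mathcal O}$ combined with $\tau(\mathcal Y_{\mathcal O})=\mathcal Y_{\mathcal O}$. Both follow from Theorem~\ref{thm:general-pbw} and the fact that $\tau$ swaps the generating divided powers.
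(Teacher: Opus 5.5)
Your proposal is correct and follows essentially the same route as the paper: you use the triangular decomposition together with Lemma~\ref{lem:rep-Cartan-character} to get $\mathcal Y_{\mathcal O}v=\mathcal Y_{\mathcal O}^{<}v$, integrality of the contravariant form via \eqref{eq:rep-HC-pairing}, and the sandwich $\mathcal L_0\subseteq\mathcal L\subseteq\mathcal L_0^\#$ over the principal ideal domain $\mathcal O$. The only deviation is in showing $\overline v\neq0$: the paper uses $B(v,v)=1\notin p\mathcal O$, whereas you use $\mathcal L\cap V_\lambda=\mathcal O v$ from the weight decomposition, and both arguments are valid.
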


\begin{proof}
Put $\mathcal L=\mathcal Y_{\mathcal O}v$. The triangular decomposition and Lemma~\ref{lem:rep-Cartan-character} give
\[
 \mathcal L
 =
 \mathcal Y_{\mathcal O}^{<}
 \mathcal H_{\mathcal O}
 \mathcal Y_{\mathcal O}^{>}v
 =
 \mathcal Y_{\mathcal O}^{<}v.
\]
The module is stable by its definition, and its $\mathbb K$-span
is $V$, because $v$ generates $V$ over $Y_{\mathbb K}(\mathfrak g)$. It remains to prove that $\mathcal L$ is finite free
over $\mathcal O$.

Let $B$ be the nondegenerate bilinear form on $V$
induced by Lemma~\ref{lem:rep-contravariant-form}.  
Lemma~\ref{lem:rep-Cartan-character} in conjunction with \eqref{eq:rep-HC-pairing} implies 
\begin{equation}\label{eq:rep-integral-pairing}
 B(xv,zv)
 =
 \chi\bigl(\HC(\tau(x)z)\bigr)
 \in\mathcal O
 \qquad
 (x,z\in\mathcal Y_{\mathcal O}^{<}).
\end{equation}
In particular, $B(\mathcal L,\mathcal L)\subseteq\mathcal O$. 
Choose a $\mathbb K$-basis $w_1,\ldots,w_d$ of $V$ contained in
$\mathcal L$, and put
\[
 \mathcal L_0=\bigoplus_{j=1}^d\mathcal O w_j,
 \qquad
 \mathcal L_0^\#=
 \{w\in V\ |\ B(w,w_j)\in\mathcal O\text{ for }1\leq j\leq d\}.
\]
The linear map
\[
 V\longrightarrow\mathbb K^d,
 \qquad w\longmapsto(B(w,w_1),\ldots,B(w,w_d))
\]
is an isomorphism by  the nondegeneracy of $B$. Let $w_1^\#,\ldots,w_d^\#$ be the basis of $V$ determined by $B(w_j^\#,w_k)=\delta_{jk}$. Then 
\[\mathcal L_0^\#
 =\bigoplus_{j=1}^d\mathcal O w_j^\#
\]
is an $\mathcal O$-lattice of rank $d$ in $V$. Equation~\eqref{eq:rep-integral-pairing} gives
\[
 \mathcal L_0\subseteq\mathcal L\subseteq\mathcal L_0^\#.
\]
Since $\mathcal O$ is a discrete valuation ring, it is
a principal ideal domain.
Hence $\mathcal L$, as a submodule of the free module
$\mathcal L_0^\#$ of rank $d$, is free of rank at most $d$.
It contains $\mathcal L_0$, so its rank is exactly $d$.
This proves that $\mathcal L$ is a lattice.

We next show that the image of $v$ in
$\overline{\mathcal L}$ is nonzero.
If $v=pw\in p \mathcal L$ for some $w\in\mathcal L$, then
\[
 1=B(v,v)=pB(w,v)\in p\mathcal O,
\]
a contradiction. It follows that  $\overline{\mathcal L}$ is a non zero $\mathscr Y_\kk(\mathfrak g)$-module, remembering $\mathcal Y_{\mathcal O}/p\mathcal Y_{\mathcal O}
 \simeq\mathscr Y_\kk(\mathfrak g)$. The equality $\mathcal L=\mathcal Y_{\mathcal O}v$
shows that $\overline v$ generates this module, and \eqref{eq:rep-reduced-character} follows from \eqref{eq:rep-Cartan-character}.  Since $\mathcal L$ is free of rank $d$, we obtain  $\dim_\kk\overline{\mathcal L}
 =d=\dim_{\mathbb K}V$. 
\end{proof}

As a corollary, we readily obtain: 

\begin{corollary}\label{cor:rep-finite-realization}
For every tuple $\mathbf D=(D_i)_{i\in I}$ of monic
polynomials in $\kk[u]$, there is a finite dimensional
$\mathscr Y_\kk(\mathfrak g)$-module generated by a
nonzero highest vector $\overline v$ satisfying
\eqref{eq:rep-reduced-character} and
\begin{equation}\label{eq:rep-reduced-series}
 H_i(u)\overline v
 =
 \frac{D_i(u+d_i)}{D_i(u)}\overline v.
\end{equation}
\end{corollary}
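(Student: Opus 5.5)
The plan is to take the module $\overline{\mathcal L}$ from Proposition~\ref{prop:rep-stable-lattice}, built from lifts $\widetilde D_i$ of the given $D_i$, and check the one remaining property, namely \eqref{eq:rep-reduced-series}. Finite dimensionality, generation by the nonzero vector $\overline v$, and \eqref{eq:rep-reduced-character} are already part of that proposition. The highest vector condition $e_{i,r}\overline v=0$ comes from $e_{i,r}v=0$ in $V$ by reduction, since $e_{i,r}\in\mathcal Y_{\mathcal O}$.

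For the series $H_i(u)$, note first that $h_{i,r}=[e_{i,r},f_{i,0}]$ lies in $\mathcal Y_R(\mathfrak g)$, so every coefficient of $H_i(u)$ lies in $\mathcal Y_{\mathcal O}$ (here $d_i\in R$). Each coefficient therefore acts on $\mathcal L$. By \eqref{eq:rep-highest-series} we have
\[
H_i(u)v=\frac{\widetilde D_i(u+d_i)}{\widetilde D_i(u)}v .
\]
Since $\widetilde D_i$ is monic with coefficients in $\mathcal O$, the expansion of this quotient at infinity has all coefficients in $\mathcal O$: invert $u^{-m_i}\widetilde D_i(u)=1+O(u^{-1})$ inside $\mathcal O[[u^{-1}]]$, and use that $d_i\in R\subseteq\mathcal O$.

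Reducing modulo $p$ coefficientwise sends the series to $D_i(u+d_i)/D_i(u)$ expanded in $\kk[[u^{-1}]]$, because reduction is a ring homomorphism $\mathcal O[[u^{-1}]]\to\kk[[u^{-1}]]$ compatible with the shift $u\mapsto u+d_i$. This gives \eqref{eq:rep-reduced-series}.

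An alternative check is to apply \eqref{eq:general-difference}, which holds in $\mathscr Y_\kk(\mathfrak g)$ after reduction, to $\overline v$ together with \eqref{eq:rep-reduced-character}. This gives
\[
(1+d_i/u)^{m_i}u^{-m_i}\bigl(\text{shifted }D_i\bigr)=H_i(u)\,u^{-m_i}D_i(u),
\]
and one can then invert $u^{-m_i}D_i(u)$. No step here is a real obstacle. The only point needing care is the integrality of the coefficients of the expanded quotient, and that follows from monicity.
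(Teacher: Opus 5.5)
Your proposal is correct and matches the paper, which states this corollary without a separate proof as an immediate consequence of Proposition~\ref{prop:rep-stable-lattice} applied to lifts $\widetilde D_i$. Obtaining \eqref{eq:rep-reduced-series} by reducing \eqref{eq:rep-highest-series}, with integrality coming from monicity of $\widetilde D_i$, or alternatively from \eqref{eq:general-difference} together with \eqref{eq:rep-reduced-character}, is exactly the step the paper leaves implicit.
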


\section{Finite dimensional simple modules of the small Yangian}
\label{sec:restricted-representations}

In this section, we classify the finite dimensional simple
modules of the small Yangian $y_\kk(\mathfrak g)$.
Theorem~\ref{thm:small-restricted} then gives the corresponding
classification for the restricted Yangian
$Y_\kk^{[p]}(\mathfrak g)$. To construct the required simple modules, we restrict
the modules from Corollary~\ref{cor:rep-finite-realization}
to $y_\kk(\mathfrak g)$ and take simple quotients of
the nonzero finite dimensional submodules generated
by their highest vectors. For the restricted Yangian of $\mathfrak{gl}_2$, such a
classification was obtained in \cite[Section~3]{CHT}. Throughout this section, we use the same notation
$e_{i,r}$, $f_{i,r}$, $h_{i,r}$, $e_\beta(r)$, and
$f_\beta(r)$ for their images in $y_\kk(\mathfrak g)$.

\subsection{Local nilpotence of the augmentation ideals}

The algebra $y_\kk(\mathfrak g)$ has a grading by the
root lattice $Q$, with
\[
 \deg e_{i,r}=\alpha_i,
 \qquad
 \deg f_{i,r}=-\alpha_i,
 \qquad
 \deg h_{i,r}=0.
\]
For $\gamma=\sum_i n_i\alpha_i\in Q^+$, write
$\operatorname{ht}(\gamma)=\sum_i n_i$. We use the loop filtration inherited from
$\mathscr Y_\kk(\mathfrak g)$.
By the proof of Theorem~\ref{thm:small-restricted},
$F_dy_\kk^{>}(\mathfrak g)$ has basis
\[
 \prod_{\beta,r}^{\rightarrow}
 e_\beta(r)^{a_{\beta,r}},
 \qquad
 0\leq a_{\beta,r}<p,
 \qquad
 \sum_{\beta,r}r a_{\beta,r}\leq d.
\]
The analogous statement holds for
$y_\kk^{<}(\mathfrak g)$ with the negative root vectors
in reverse order.

The assignment $e_{i,r}\mapsto0$ defines an algebra
homomorphism
\[
 \epsilon^{>}:y_\kk^{>}(\mathfrak g)\longrightarrow\kk,
 \qquad \epsilon^{>}(1)=1.
\]
Similarly, we define  $\epsilon^{<}:y_\kk^{<}(\mathfrak g)\longrightarrow\kk$. Their kernels
\[
 J^{>}=\ker\epsilon^{>},
 \qquad
 J^{<}=\ker\epsilon^{<}
\]
are the augmentation ideals. They are spanned by the
ordered product basis  containing at least one positive
or negative root vector, respectively.

\begin{lemma}\label{lem:rep-local-nilpotence}
The ideals $J^{>}$ and $J^{<}$ are locally nilpotent, that is, 
for every finite subset $\{x_1,\ldots,x_s\}$ of either
ideal, there is an integer $N\geq1$ such that
\[
 x_{j_1}\cdots x_{j_N}=0
 \qquad
 (1\leq j_1,\ldots,j_N\leq s).
\]
\end{lemma}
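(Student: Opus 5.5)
The plan is to combine the root grading with the loop filtration and the explicit basis of $F_dy_\kk^{>}(\mathfrak g)$ recorded just before the statement. The point is that only finitely many basis monomials can occur in a fixed finite-dimensional filtered piece, and their root degrees are bounded. We treat $J^{>}$; the case of $J^{<}$ follows by applying $\tau$, or by repeating the argument verbatim.

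Let $x_1,\ldots,x_s\in J^{>}$ be given. Choose $d$ such that every $x_j\in F_dy_\kk^{>}(\mathfrak g)$. Since $J^{>}$ is spanned by basis monomials with at least one root vector, we may also decompose each $x_j$ into $Q^+$-homogeneous components. Each component has degree $\gamma\neq0$, because the degree-zero part of $y_\kk^{>}(\mathfrak g)$ is $\kk\cdot1$ and is killed by $\epsilon^{>}$.

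The key observation is the following. A word $x_{j_1}\cdots x_{j_N}$ expands as a sum of products of homogeneous components, so it suffices to treat homogeneous $x_j$. A product of $N$ homogeneous elements of nonzero degree in $Q^+$ has degree of height at least $N$. On the other hand, the product lies in $F_{Nd}y_\kk^{>}(\mathfrak g)$. This filtration bound alone grows with $N$, so it does not suffice by itself, and we must control the height in terms of the loop degree more carefully.

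Use the basis: a monomial $\prod e_\beta(r)^{a_{\beta,r}}$ with $0\le a_{\beta,r}<p$ has height $\sum a_{\beta,r}\operatorname{ht}(\beta)$. If its loop degree is at most $D$, then only root vectors with $r\le D$ occur. Each pair $(\beta,r)$ contributes at most $(p-1)\operatorname{ht}(\beta)$, so the height is at most $(p-1)(D+1)\sum_\beta\operatorname{ht}\beta$. This is still linear in $D$, so the argument is not yet closed.

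The real difficulty is that loop degree and height both grow linearly. I would get around it as follows. For each fixed $r$, the elements $e_\beta(r)$ with loop index exactly $r$ can each appear at most $p-1$ times, so a basis monomial of height $h$ must use loop indices spread out: among its $\ge h/((p-1)|\Delta^+|)$ factors, at most $(p-1)|\Delta^+|$ share any given $r$. Its loop degree is then at least roughly $c\,h^2$ for a constant $c>0$. Since the product of $N$ homogeneous elements has height $\ge N$ and loop degree $\le Nd$, we obtain $cN^2\le Nd$ whenever the product is nonzero. Hence the product vanishes for $N>d/c$.

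The main obstacle is justifying that the product, expanded in the basis, only involves monomials of height equal to its $Q$-degree; this holds because the basis is $Q$-homogeneous. It also requires that the filtration bound $F_{Nd}$ applies to the expansion, which follows from the stated description of $F_dy_\kk^{>}$ via the proof of Theorem~\ref{thm:small-restricted}. The quadratic lower bound itself is elementary counting: minimizing $\sum r$ over a multiset with each value repeated at most $K=(p-1)|\Delta^+|$ times gives at least $K\binom{\lfloor h'/K\rfloor}{2}$, where $h'\ge h/\max\operatorname{ht}\beta$ is the number of factors.
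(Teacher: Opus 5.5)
Your proposal is correct and follows essentially the same route as the paper. After passing to root-homogeneous components of positive height, a product of $N$ of them lies in $F_{Nd}y_\kk^{>}(\mathfrak g)$ and has height at least $N$. So every PBW monomial in its expansion has at least $N/\max_\beta\operatorname{ht}(\beta)$ factors, and since each loop index $r$ carries at most $(p-1)|\Delta^+|$ of them, its loop degree grows quadratically in $N$, contradicting the bound $Nd$ for large $N$.

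The only slip is in your middle paragraph, where the number of factors should be bounded below by $h/\max_\beta\operatorname{ht}(\beta)$ rather than $h/((p-1)|\Delta^+|)$; you state this correctly at the end.
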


\begin{proof}
It suffices to prove the assertion for $J^{>}$, since
$\tau$ interchanges $J^{>}$ and $J^{<}$.
Put
\[
 c=(p-1)|\Delta^+|,
 \qquad
 h_*=\max_{\beta\in\Delta^+}\operatorname{ht}(\beta).
\]
For a PBW monomial
\[
 z=\prod_{\beta,r}^{\rightarrow}
 e_\beta(r)^{a_{\beta,r}},
 \qquad 0\leq a_{\beta,r}<p,
\]
put
\[
 n(z)=\sum_{\beta,r}a_{\beta,r},
 \qquad
 \operatorname{ht}(z)
 =\sum_{\beta,r}\operatorname{ht}(\beta)a_{\beta,r}.
\]
Then $\operatorname{ht}(z)\leq h_*n(z)$.
Write $A_r=\sum_\beta a_{\beta,r}$, then
$0\leq A_r\leq c$ and $\sum_r A_r=n(z)$.
Writing $n(z)=cq+t$ with $0\leq t<c$, we obtain
\begin{equation}\label{eq:rep-degree-bound}
\begin{aligned}
 \deg_{\mathrm{loop}}z
 &=\sum_{r\geq0}rA_r
 =qn(z)+\sum_{r\geq0}(r-q)A_r\\
 &\geq qn(z)+c\sum_{r=0}^{q-1}(r-q)
 =\frac{cq(q-1)}2+qt\\
 &=\frac{n(z)^2}{2c}-\frac{n(z)}2
   +\frac{t(c-t)}{2c}
 \geq\frac{n(z)^2}{2c}-\frac{n(z)}2.
\end{aligned}
\end{equation}
Fix $x_1,\ldots,x_s\in J^{>}$, and choose an integer
$L\geq0$ such that
$x_j\in F_Ly_\kk^{>}(\mathfrak g)$ for all $j$.
A product $x_{j_1}\cdots x_{j_N}$ of these elements lies in
$F_{NL}y_\kk^{>}(\mathfrak g)$.
Since every root homogeneous component of each $x_j$
has positive height, any PBW monomial $z$ occurring
in the product satisfies
\[
 \deg_{\mathrm{loop}}z\leq NL,
 \qquad
 \operatorname{ht}(z)\geq N,
 \qquad
 n(z)\geq N/h_*.
\]
For sufficiently large $N$, \eqref{eq:rep-degree-bound}
therefore gives
\[
 NL\geq \deg_{\mathrm{loop}}z
 \geq \frac{N^2}{2ch_*^2}-\frac{N}{2h_*}>NL,
\]
a contradiction. Thus every such product is zero
once $N$ is sufficiently large.
\end{proof}

\subsection{Highest weight modules}

Following \cite[Section~3.1]{CHT}, for a tuple
\[
 \boldsymbol\psi=(\psi_i(u))_{i\in I},
 \qquad
 \psi_i(u)
 =
 1+d_i\sum_{r\geq0}\mu_{i,r}u^{-r-1}
 \in1+u^{-1}\kk[[u^{-1}]],
\]
 we say that $\boldsymbol\psi$ is \emph{restricted} if
\begin{equation}\label{eq:rep-restricted-weight}
 \prod_{a\in\mathbb F_p}\psi_i(u+a d_i)=1
 \qquad(i\in I).
\end{equation}

\begin{lemma}\label{lem:rep-Cartan-characters}
The Cartan subalgebra has the presentation
\begin{equation}\label{eq:rep-Cartan-presentation}
 y_\kk^0(\mathfrak g)
 \simeq
 \kk[h_{i,r}\mid i\in I,\ r\geq0]/
 (\zeta_{i,r}\mid i\in I,\ r\geq0).
\end{equation}
Its characters are exactly the restricted tuples,
with $h_{i,r}\mapsto\mu_{i,r}$.
\end{lemma}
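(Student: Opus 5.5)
The plan is to compare the polynomial ring $\kk[h_{i,r}]$ modulo the ideal generated by the $\zeta_{i,r}$ with $y_\kk^0(\mathfrak g)$ through the map $h_{i,r}\mapsto \overline h_{i,r}$. This map is well defined and surjective. It is defined on the polynomial ring because the $\overline h_{i,r}$ commute, and Lemma~\ref{lem:small-norm-vanishing} gives $\eta(\zeta_{i,r})=0$, so it factors through the quotient. Surjectivity holds because $y_\kk^0(\mathfrak g)$ is generated by the $\overline h_{i,r}$.

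For injectivity I would compare bases.
\begin{itemize}
\item \emph{Spanning on the quotient side.} By Proposition~\ref{prop:small-Cartan-relations}, $\zeta_{i,0}=h_{i,0}^p-h_{i,0}$, and $\zeta_{i,r}=h_{i,r}^p-h_{i,pr}+\rho_{i,r}$ with $\rho_{i,r}$ of loop degree $<pr$. Using these relations, any power $h_{i,r}^{b}$ with $b\ge p$ can be rewritten as $h_{i,pr}h_{i,r}^{b-p}$ plus terms of lower loop degree, or for $r=0$ as a lower power of $h_{i,0}$.
\item \emph{Termination.} An induction on loop degree, and within fixed loop degree on the total exponent (which drops by $p-1$ at each step), shows that the quotient is spanned by the monomials $\prod h_{i,r}^{b_{i,r}}$ with $0\le b_{i,r}<p$.
\item \emph{Independence on the image side.} The images of these monomials are linearly independent in $y_\kk^0(\mathfrak g)$. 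This is part of Theorem~\ref{thm:small-restricted}: the products \eqref{eq:small-PBW-basis} with trivial root-vector factors form part of a basis. Equivalently, their leading terms are independent by Lemma~\ref{lem:small-Cartan-independence}.
\end{itemize}
A spanning set mapping to an independent set forces the surjection to be an isomorphism, which gives \eqref{eq:rep-Cartan-presentation}. The main care is the termination of the rewriting, and the ordering above handles it.

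For the characters, note that $y_\kk^0(\mathfrak g)$ is commutative. By the presentation, a character is the same thing as an assignment $h_{i,r}\mapsto\mu_{i,r}\in\kk$ under which every $\zeta_{i,r}$ vanishes.

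It remains to show that the vanishing of all $\zeta_{i,r}(\mu)$ is equivalent to \eqref{eq:rep-restricted-weight}.
\begin{itemize}
\item Applying the assignment to $N_i(u)$ gives $\prod_{a\in\mathbb F_p}\psi_i(u+ad_i)$, and $\zeta_{i,r}$ maps to $d_i^{-p}$ times the coefficient of $u^{-p(r+1)}$ in this product.
\item The product is invariant under $u\mapsto u+d_i$. A series in $u^{-1}$ that is invariant under translation by $d_i$ is a series in $w=u^p-d_i^{p-1}u$, since the product over $\mathbb F_p$ of $u+ad_i$ equals $u^p-d_i^{p-1}u$. This follows from the Artin--Schreier description of $\mathbb F_p$-invariants, applied to expansions at infinity.
\item A series in $w^{-1}$ is $1$ exactly when its coefficients of $w^{-(r+1)}$ vanish. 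These coefficients are triangular, with unit diagonal, in the coefficients of $u^{-p(r+1)}$.
\item Hence $\zeta_{i,r}(\mu)=0$ for all $r$ if and only if the product equals $1$.
\end{itemize}

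Alternatively, for the forward direction one can simply note that if the product is $1$, every coefficient of nonzero degree vanishes, so each $\zeta_{i,r}(\mu)=0$. The converse requires the $w$-series argument above, and I expect this invariance step to be the main obstacle.
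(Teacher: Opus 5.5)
Your proof of the presentation is the same as the paper's: a surjection from the quotient, spanning by monomials with exponents $<p$ via \eqref{eq:small-Cartan-leading}, and independence from Theorem~\ref{thm:small-restricted}. Your termination bookkeeping just makes explicit what the paper leaves implicit.

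For the characters you take a genuinely different route, and it is correct. You use the presentation to reduce to showing that $\zeta_{i,r}(\mu)=0$ for all $r$ forces $\prod_{a\in\mathbb F_p}\psi_i(u+ad_i)=1$. You prove this by translation invariance: an invariant series is a power series in $w^{-1}$, where $w=u^p-d_i^{p-1}u$, and its $w^{-(r+1)}$-coefficients are unitriangular in the $u^{-p(r+1)}$-coefficients. The invariance step needs no Artin--Schreier theory; the leading-term trick from Lemma~\ref{lem:rank-Q-conjugation} suffices. If an invariant series has leading term $c\,u^{-n}$ with $p\nmid n$, translation by $d_i$ changes it by $-nd_ic\,u^{-n-1}+\cdots\neq0$. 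Otherwise you subtract $c\,w^{-n/p}$ and continue.

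The paper avoids this step entirely. Lemma~\ref{lem:small-norm-vanishing}, obtained by multiplying the integral difference equation for $Q_i(u)$, already gives $\eta(N_i(u))=1$ as an identity of whole series in $y_\kk(\mathfrak g)$. Applying any character $\chi$ to it yields the restricted condition at once, without using the presentation in that direction.

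Your argument is self-contained once the presentation is known. It also explains intrinsically why the coefficients $\zeta_{i,r}$ alone suffice: run over $\kk[h_{i,r}]$ instead of $\kk$, it shows that every coefficient of $N_i(u)-1$ lies in the ideal $(\zeta_{i,r})$. The paper's proof is shorter because that redundancy was already paid for in Lemma~\ref{lem:small-norm-vanishing}.
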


\begin{proof}

The Cartan generators of $y_\kk^0(\mathfrak g)$ satisfy
$\zeta_{i,r}=0$, so sending each variable $h_{i,r}$
to the corresponding Cartan generator defines a
surjective homomorphism
\[
 \kk[h_{i,r}\mid i\in I,\ r\geq0]/
 (\zeta_{i,r}\mid i\in I,\ r\geq0)
 \longrightarrow y_\kk^0(\mathfrak g).
\]
By \eqref{eq:small-Cartan-leading}, the Cartan monomials
with exponents less than $p$ span the quotient. Their images form a basis of
$y_\kk^0(\mathfrak g)$ by
Theorem~\ref{thm:small-restricted}.
Hence this homomorphism is also injective, proving
\eqref{eq:rep-Cartan-presentation}.  For a restricted tuple $\boldsymbol\psi$, consider the
evaluation homomorphism
\[
 \operatorname{ev}_{\boldsymbol\psi}:
 \kk[h_{i,r}\mid i\in I,\ r\geq0]\longrightarrow\kk,
 \qquad h_{i,r}\longmapsto\mu_{i,r}.
\]
It sends $N_i(u)$ to the left hand side of
\eqref{eq:rep-restricted-weight}, which equals $1$.
Hence $\operatorname{ev}_{\boldsymbol\psi}(\zeta_{i,r})=0$
for all $i,r$, so this homomorphism factors through
the quotient in \eqref{eq:rep-Cartan-presentation}
and defines a character of $y_\kk^0(\mathfrak g)$. Conversely, let $\chi:y_\kk^0(\mathfrak g)\to\kk$
be a character and put $\mu_{i,r}=\chi(h_{i,r})$.
Applying $\chi$ to the identity
$N_i(u)=1$ from Lemma~\ref{lem:small-norm-vanishing}
gives \eqref{eq:rep-restricted-weight}. Thus $\boldsymbol{\psi}$ is restricted.
\end{proof}

The Cartan commutations used in the proof of
Theorem~\ref{thm:small-restricted} give
\begin{equation}\label{eq:rep-Cartan-augmentation}
 y_\kk^0(\mathfrak g)J^{<}
\subseteq J^{<}y_\kk^0(\mathfrak g),\qquad
 J^{>}y_\kk^0(\mathfrak g)
 \subseteq y_\kk^0(\mathfrak g)J^{>}.
\end{equation}
Together with the triangular decomposition, these
relations show that every character of
$y_\kk^0(\mathfrak g)$ extends to the subalgebra
$y_\kk^0(\mathfrak g)y_\kk^{>}(\mathfrak g)$
by letting $J^{>}$ act as zero.

A highest weight $y_\kk(\mathfrak g)$-module with
highest weight $\boldsymbol\psi$ is generated
by a nonzero vector $v$ satisfying
\[
 e_{i,r}v=0,
 \qquad
 h_{i,r}v=\mu_{i,r}v
 \qquad(i\in I,\ r\geq0).
\]
Its highest weight is necessarily restricted by
Lemma~\ref{lem:rep-Cartan-characters}.

For a restricted tuple $\boldsymbol\psi$, let
$\kk_{\boldsymbol\psi}$ be the one dimensional
$y_\kk^0(\mathfrak g)y_\kk^{>}(\mathfrak g)$-module
on which $h_{i,r}$ acts by $\mu_{i,r}$ and
$J^{>}$ acts as zero.
Define  the induced module
\begin{equation}\label{eq:rep-baby-Verma}
 Z_\kk(\boldsymbol\psi)
 =
 y_\kk(\mathfrak g)
 \otimes_{y_\kk^0(\mathfrak g)y_\kk^{>}(\mathfrak g)}
 \kk_{\boldsymbol\psi},
 \qquad
 v_{\boldsymbol\psi}=1\otimes1.
\end{equation}
Equivalently, it is the quotient of $y_\kk(\mathfrak g)$
by the left ideal
\[
 \sum_{i,r}y_\kk(\mathfrak g)e_{i,r}
 +
 \sum_{i,r}y_\kk(\mathfrak g)(h_{i,r}-\mu_{i,r}).
\]
Every highest weight module with highest weight
$\boldsymbol\psi$ is therefore a quotient of
$Z_\kk(\boldsymbol\psi)$. By Corollary~\ref{cor:small-triangular}, the map
\begin{equation}\label{eq:rep-baby-PBW}
 y_\kk^{<}(\mathfrak g)
 \longrightarrow Z_\kk(\boldsymbol\psi),
 \qquad
 x\longmapsto xv_{\boldsymbol\psi}
\end{equation}
is an isomorphism of vector spaces, and the vectors
\[
 \left(
 \prod_{\beta,r}^{\leftarrow}
 f_\beta(r)^{a_{\beta,r}}
 \right)v_{\boldsymbol\psi},
 \qquad
 0\leq a_{\beta,r}<p,
\]
with finite support form a basis of $Z_\kk(\boldsymbol\psi)$. Following \cite{CHT}, we call
$Z_\kk(\boldsymbol\psi)$ the  \emph{baby Verma module}. The following result  extends
\cite[Proposition~3.4]{CHT}.

\begin{proposition}\label{prop:rep-simple-head}
The baby Verma module $Z_\kk(\boldsymbol\psi)$ has a unique maximal
proper submodule. Denote its simple quotient by
$L_\kk(\boldsymbol\psi)$. Moreover,
\[
 L_\kk(\boldsymbol\psi)
 \simeq
 L_\kk(\boldsymbol\phi)
 \quad\Longleftrightarrow\quad
 \boldsymbol\psi=\boldsymbol\phi.
\]
\end{proposition}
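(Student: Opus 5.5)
The plan is the standard highest weight argument, using the $\mathfrak g$-weight grading together with the triangular decomposition of Corollary~\ref{cor:small-triangular}. The difficulty is that $y_\kk(\mathfrak g)$ contains only $h_{i,0}$, not the binomials $\binom{h_{i,0}}b$. Its weights therefore take values in $\mathbb F_p$, so a proper submodule is not forced, by weight reasons alone, to miss the highest weight space. Instead I would use the root lattice grading by $Q$ introduced above, under which $Z_\kk(\boldsymbol\psi)=y_\kk^{<}(\mathfrak g)v_{\boldsymbol\psi}$ is graded by $-Q^+$. By \eqref{eq:rep-baby-PBW}, its degree $0$ component is $\kk v_{\boldsymbol\psi}$, since the only PBW monomial of degree $0$ in $y_\kk^{<}(\mathfrak g)$ is $1$.

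The first step is to show that every submodule $N$ is $Q$-graded. For this I would let $y_\kk^0(\mathfrak g)$ act: $h_{i,0}$ acts on the degree $-\gamma$ component by $\mu_{i,0}-\gamma(h_{i,0})$, but this only detects $\gamma$ modulo $p$. Instead, take $w\in N$ and write $w=\sum_\gamma w_{-\gamma}$. Let $\gamma_0$ be a minimal-height degree occurring. Choose $x\in y_\kk^{>}(\mathfrak g)$ homogeneous of degree $\gamma_0$ with $x w_{-\gamma_0}$ a nonzero multiple of $v_{\boldsymbol\psi}$. Such an $x$ exists because otherwise $J^{>}$ would kill a nonzero vector, and one can climb by positive root vectors using local nilpotence (Lemma~\ref{lem:rep-local-nilpotence}).

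This grading approach is getting cumbersome, so I would commit to a cleaner route instead: define $N_{\max}$ as the sum of all submodules $N$ with $N\cap\kk v_{\boldsymbol\psi}=0$. The key claim is that any proper submodule $N$ satisfies $v_{\boldsymbol\psi}\notin N$. This is trivial, since $v_{\boldsymbol\psi}$ generates $Z_\kk(\boldsymbol\psi)$. What remains is to show that $N\cap\kk v_{\boldsymbol\psi}=0$ is preserved under sums. Using the $Q$-grading by $-Q^+$ and the claim that submodules are graded, the degree $0$ part of $\sum N$ is $\sum N_0=0$. So everything reduces to gradedness of submodules.

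For gradedness I would use the Cartan action more carefully. The $Q$-grading on $y_\kk(\mathfrak g)$ should be inner: by Proposition~\ref{prop:small-Cartan-relations} and Lemma~\ref{lem:rep-Cartan-characters}, the eigenvalues of $h_{i,0}$ lie in $\mathbb F_p$, so that alone is insufficient. I expect this to be the main obstacle. My first attempt would be to use the pair ($h_{i,0}$-weight, loop-degree-independent) together with the operators $h_{i,1}$. Relation \eqref{eq:prelim-hx} with $r=0$ gives $[h_{i,1},x^{\pm}_{j,s}]-[h_{i,0},x^{\pm}_{j,s+1}]=\pm\frac{d_ia_{ij}}2(\cdots)$. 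This does not yield a grading element either.

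The fallback is the local nilpotence argument. Let $N$ be a submodule not containing $v_{\boldsymbol\psi}$, and let $N^{\mathrm{top}}=\{w\in N\mid J^{>}w=0\}$. By Lemma~\ref{lem:rep-local-nilpotence} applied to the finite dimensional subspace $y_\kk^{>}(\mathfrak g)w$ (finite since $y_\kk^{>}$ acts through finitely many degrees on $w$), $N^{\mathrm{top}}\ne0$ whenever $N\ne0$. I then show directly that the space of $J^{>}$-invariants of $Z_\kk(\boldsymbol\psi)$ lying in a proper submodule meets $\kk v_{\boldsymbol\psi}$ trivially. Define $\pi:Z_\kk(\boldsymbol\psi)\to\kk$ as the coefficient of $v_{\boldsymbol\psi}$ in the PBW basis. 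Then $N$ proper implies $\pi(N)=0$. Otherwise take $w\in N$ with $\pi(w)\ne0$, and write $w=cv_{\boldsymbol\psi}+w'$ with $w'\in J^{<}v_{\boldsymbol\psi}$. Applying a suitable element of $y_\kk^{>}$ obtained from local nilpotence of $J^{<}$ kills $w'$ while fixing $v_{\boldsymbol\psi}$; concretely, use the projection $\pi$ realized by $\HC$ and the $y_\kk^0$-character. This gives $v_{\boldsymbol\psi}\in N$.

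Hence the sum of all proper submodules lies in $\ker\pi$, which is proper, and it is the unique maximal proper submodule. For the isomorphism statement, the highest weight vectors of $L_\kk(\boldsymbol\psi)$ are exactly $\kk\bar v_{\boldsymbol\psi}$. Indeed, any $J^{>}$-invariant vector generates a submodule, so by simplicity its image under $\pi$ is nonzero, and subtracting a multiple of $\bar v_{\boldsymbol\psi}$ gives an invariant vector in $\ker\pi$, hence zero. Therefore $\boldsymbol\psi$ is recovered as the $y_\kk^0$-character on this line, and $L_\kk(\boldsymbol\psi)\simeq L_\kk(\boldsymbol\phi)$ forces $\boldsymbol\psi=\boldsymbol\phi$.
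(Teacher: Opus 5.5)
There is a genuine gap in the one step that carries the whole argument. You need to show that a submodule $N$ containing some $w=cv_{\boldsymbol\psi}+w'$, with $c\neq0$ and $w'\in J^{<}v_{\boldsymbol\psi}$, must contain $v_{\boldsymbol\psi}$. You correctly saw that the gradedness route fails for lack of a grading element, so everything rests on this step.

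The mechanism you propose, an element of $y_\kk^{>}(\fg)$ that kills $w'$ while fixing $v_{\boldsymbol\psi}$, cannot exist. Suppose $x\in y_\kk^{>}(\fg)$ satisfies $xv_{\boldsymbol\psi}\neq0$. Then $x=\epsilon^{>}(x)+j$ with $\epsilon^{>}(x)\neq0$ and $j\in J^{>}$. Let $\gamma_0$ have maximal height among the degrees occurring in $w'$. Since $Z_\kk(\boldsymbol\psi)$ is $Q$-graded and $j$ strictly raises degree, the degree $-\gamma_0$ component of $xw'$ is $\epsilon^{>}(x)w'_{-\gamma_0}\neq0$. Appealing to $\HC$ does not repair this. $\HC$ together with the character computes the coefficient $\pi(av_{\boldsymbol\psi})$, but it does not produce an element of $y_\kk(\fg)$ acting on $N$. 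So, as written, neither $\pi(N)=0$ for proper $N$ nor the existence of a unique maximal submodule is established.

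The missing idea is to act from the negative side, as the paper does. You invoke the right lemma, local nilpotence of $J^{<}$, but on the wrong side. By \eqref{eq:rep-baby-PBW}, $w=(c+x)v_{\boldsymbol\psi}$ for a unique $x\in J^{<}$. By Lemma~\ref{lem:rep-local-nilpotence}, $x^n=0$ for some $n$, so $c+x$ is invertible in $y_\kk^{<}(\fg)$ with inverse $c^{-1}\sum_{j<n}(-c^{-1}x)^j$. Then $(c+x)^{-1}w=v_{\boldsymbol\psi}\in N$. Hence every proper submodule lies in $J^{<}v_{\boldsymbol\psi}=\ker\pi$, and the rest of your first part goes through.

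Two smaller points. First, the claim that $y_\kk^{>}(\fg)w$ is finite dimensional is false in general. For $w=f_{i,0}f_{j,0}v_{\boldsymbol\psi}$ with $a_{ij}\neq0$, the vectors $e_{i,m}w=h_{i,m}f_{j,0}v_{\boldsymbol\psi}$ for $m\geq0$ are linearly independent: by \eqref{eq:general-H-conjugation}, each is a nonzero multiple of $f_{j,m}v_{\boldsymbol\psi}$ plus lower terms. Fortunately $N^{\mathrm{top}}$ is never used. Second, in the isomorphism part, the step ``an invariant vector in $\ker\pi$, hence zero'' needs an extra observation. For $u\in J^{<}\bar v_{\boldsymbol\psi}$ with $J^{>}u=0$, one has $y_\kk(\fg)u=y_\kk^{<}(\fg)y_\kk^{0}(\fg)u\subseteq J^{<}\bar v_{\boldsymbol\psi}$ by \eqref{eq:rep-Cartan-augmentation}, and this is a proper subspace of the simple module. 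With that added, recovering $\boldsymbol\psi$ from the line of $J^{>}$-invariants spells out what the paper's brief remark leaves implicit.
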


\begin{proof}
By \eqref{eq:rep-baby-PBW}, every vector of
$Z_\kk(\boldsymbol\psi)$ has a unique expression
\[
 (c+x)v_{\boldsymbol\psi},
 \qquad c\in\kk,\quad x\in J^{<}.
\]
By Lemma~\ref{lem:rep-local-nilpotence}, $x^n=0$
for some $n\geq1$. If $c\ne0$, then
\[
 (c+x)^{-1}
 =
 c^{-1}\sum_{j=0}^{n-1}(-c^{-1}x)^j
 \in y_\kk^{<}(\mathfrak g).
\]
Thus any submodule containing $(c+x)v_{\boldsymbol\psi}$
with $c\ne0$ contains $v_{\boldsymbol\psi}$ and equals
$Z_\kk(\boldsymbol\psi)$.
Every proper submodule is therefore contained in
$J^{<}v_{\boldsymbol\psi}$, hence the sum of all proper submodules is the unique maximal proper submodule. Finally, the one dimensional quotient $Z_\kk(\boldsymbol\psi)/J^{<}v_{\boldsymbol\psi}$ can be viewed as a $y^0_{\kk}(\fg)$-module, so we get $L_\kk(\boldsymbol\psi)
 \xrightarrow{\sim}
 L_\kk(\boldsymbol\phi)$ if and only if $\boldsymbol\psi=\boldsymbol\phi$.
\end{proof}

\begin{proposition}\label{prop:rep-highest-vector}
Every nonzero finite dimensional
$y_\kk(\mathfrak g)$-module contains a highest vector.
Consequently, every finite dimensional simple
$y_\kk(\mathfrak g)$-module is isomorphic to some
$L_\kk(\boldsymbol\psi)$.
\end{proposition}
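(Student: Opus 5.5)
Let $V$ be a nonzero finite dimensional $y_\kk(\mathfrak g)$-module. The plan is first to produce a nonzero vector killed by all $e_{i,r}$, then to refine it to a simultaneous eigenvector for the commutative Cartan part.

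\emph{Step 1: vectors killed by $J^{>}$.} Set $V^{>}=\{w\in V\mid J^{>}w=0\}$. To see $V^{>}\neq0$, choose a basis of $V$. Since $V$ is finite dimensional, the image of $y_\kk^{>}(\mathfrak g)$ in $\operatorname{End}(V)$ is finite dimensional, so the image of $J^{>}$ is spanned by the images of finitely many elements $x_1,\ldots,x_s\in J^{>}$. By Lemma~\ref{lem:rep-local-nilpotence} there is $N$ such that every product of $N$ of the $x_j$ vanishes. Hence the image of $J^{>}$ is a nilpotent ideal of the image algebra, with $(J^{>})^NV=0$. Take the largest $k<N$ with $(J^{>})^kV\neq0$; then $0\neq(J^{>})^kV\subseteq V^{>}$.

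\emph{Step 2: stability of $V^{>}$ under the Cartan part.} By \eqref{eq:rep-Cartan-augmentation}, $J^{>}y_\kk^0(\mathfrak g)\subseteq y_\kk^0(\mathfrak g)J^{>}$. So for $h\in y_\kk^0(\mathfrak g)$ and $w\in V^{>}$ we get $J^{>}hw\subseteq y_\kk^0(\mathfrak g)J^{>}w=0$. Thus $V^{>}$ is a nonzero finite dimensional module over the commutative algebra $y_\kk^0(\mathfrak g)$, which is generated by the $h_{i,r}$ by Corollary~\ref{cor:small-triangular}. Since $\kk$ is algebraically closed, the commuting operators given by the $h_{i,r}$ have a common eigenvector $v\in V^{>}$; only finitely many operators matter, because their span in $\operatorname{End}(V^{>})$ is finite dimensional. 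Then $e_{i,r}v=0$ and $h_{i,r}v=\mu_{i,r}v$, so $v$ is a highest vector. By Lemma~\ref{lem:rep-Cartan-characters} the eigenvalues form a restricted tuple $\boldsymbol\psi$.

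\emph{Step 3: the simple case.} If $V$ is simple, then $V=y_\kk(\mathfrak g)v$, so $V$ is a highest weight module. Hence it is a quotient of $Z_\kk(\boldsymbol\psi)$ by the universal property of \eqref{eq:rep-baby-Verma}. Since $V$ is simple, the kernel is a maximal proper submodule, so by Proposition~\ref{prop:rep-simple-head} we get $V\simeq L_\kk(\boldsymbol\psi)$.

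The main point requiring care is Step 1: the passage from local nilpotence of $J^{>}$ (Lemma~\ref{lem:rep-local-nilpotence}) to actual nilpotence on $V$. This works because only a finite dimensional image in $\operatorname{End}(V)$ is involved, so finitely many elements of $J^{>}$ suffice. The remaining steps are formal.
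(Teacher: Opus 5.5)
Your proposal is correct and follows essentially the same route as the paper. The paper likewise uses Lemma~\ref{lem:rep-local-nilpotence} to show that the image of $J^{>}$ in $\operatorname{End}_\kk(V)$ is nilpotent, uses \eqref{eq:rep-Cartan-augmentation} to see that the nonzero space $V^{>}$ is stable under $y_\kk^0(\mathfrak g)$, and takes a common eigenvector there; it then treats the simple case via Proposition~\ref{prop:rep-simple-head}.
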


\begin{proof}
Let $V$ be a nonzero finite dimensional
$y_\kk(\mathfrak g)$-module, and let $\mathcal E$ be
the image of $J^{>}$ in $\operatorname{End}_\kk(V)$.
Choose $x_1,\ldots,x_s\in J^{>}$ whose images form
a basis of $\mathcal E$.
By Lemma~\ref{lem:rep-local-nilpotence}, there is
an integer $N\geq1$ such that every product of $N$
of the $x_j$ is zero.
Every element of $\mathcal E$ is a linear combination
of their images. Expanding a product of $N$ such
linear combinations therefore gives zero.
Hence $\mathcal E^N=0$.

Choose the least $n\geq1$ such that $\mathcal E^nV=0$,
where $\mathcal E^0V=V$.
Then $\mathcal E^{n-1}V$ is nonzero and is annihilated
by $\mathcal E$. Hence
\[
 V^{>}
 =
 \{v\in V\ |\ J^{>}v=0\}
\]
is nonzero. By \eqref{eq:rep-Cartan-augmentation}, $V^{>}$ is
preserved by $y_\kk^0(\mathfrak g)$.
This algebra is commutative, and $\kk$ is algebraically
closed, so it has a nonzero common eigenvector in
$V^{>}$. This is a highest vector, whose highest weight
is restricted by Lemma~\ref{lem:rep-Cartan-characters}.

If $V$ is simple, the highest vector generates $V$,
so $V$ is a simple quotient of
$Z_\kk(\boldsymbol\psi)$ for some restricted $\boldsymbol{\psi}$.
Proposition~\ref{prop:rep-simple-head} implies $V$ is isomorphic to
$L_\kk(\boldsymbol\psi)$.
\end{proof}

\subsection{Drinfeld polynomials}

We now determine which restricted highest weights give
finite dimensional simple modules.

\begin{lemma}\label{lem:rep-highest-rationality}
Let $V$ be a nonzero finite dimensional highest weight
$y_\kk(\mathfrak g)$-module with highest weight
$\boldsymbol\psi$.
Then $\psi_i(u)\in\kk(u)$ for every $i\in I$.
\end{lemma}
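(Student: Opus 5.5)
The plan is the classical linear-recurrence argument for rationality of highest weights, which only uses the relation \eqref{eq:prelim-ef} and finite dimensionality and therefore works verbatim over $\kk$. Fix $i\in I$ and let $v$ be the highest vector of $V$, so that $e_{j,r}v=0$ and $h_{j,r}v=\mu_{j,r}v$ for all $j,r$.

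First, since $V$ is finite dimensional, the vectors $f_{i,0}v,f_{i,1}v,f_{i,2}v,\ldots$ span a finite dimensional subspace. Hence there are $N\geq0$ and scalars $c_0,\ldots,c_{N-1}\in\kk$ such that, setting $c_N=1$,
\[
 \sum_{r=0}^{N}c_r f_{i,r}v=0 .
\]
Second, apply $e_{i,s}$ for an arbitrary $s\geq0$. Since $e_{i,s}v=0$, relation \eqref{eq:prelim-ef} (which holds in $y_\kk(\mathfrak g)$ via $\eta$) gives $e_{i,s}f_{i,r}v=[e_{i,s},f_{i,r}]v=h_{i,r+s}v=\mu_{i,r+s}v$. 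As $v\neq0$, we obtain the linear recurrence
\[
 \sum_{r=0}^{N}c_r\mu_{i,r+s}=0\qquad(s\geq0).
\]

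Third, translate the recurrence into rationality. Put $c(u)=\sum_{r=0}^N c_ru^r$, a monic polynomial, and $\mu(u)=\sum_{k\geq0}\mu_{i,k}u^{-k-1}$. The coefficient of $u^{-s-1}$ in $c(u)\mu(u)$ is exactly $\sum_r c_r\mu_{i,r+s}=0$ for every $s\geq0$. Therefore $c(u)\mu(u)$ has only nonnegative powers of $u$ and is a polynomial $g(u)$ of degree $<N$. It follows that $\mu(u)=g(u)/c(u)$, and hence $\psi_i(u)=1+d_i\mu(u)=(c(u)+d_ig(u))/c(u)\in\kk(u)$, where the expansion is taken at infinity.

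There is no serious obstacle here. The only point to check is that the relation $[e_{i,s},f_{i,r}]=h_{i,r+s}$ and the identity $\overline h_{i,r}=[\overline e_{i,r},\overline f_{i,0}]$ are consistent inside $y_\kk(\mathfrak g)$. This holds because $\eta$ is a homomorphism from the modular Yangian $Y_\kk(\mathfrak g)$, whose defining relations include \eqref{eq:prelim-ef}.
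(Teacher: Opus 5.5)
Your proposal is correct and follows essentially the same argument as the paper. The paper also takes a linear dependence $\sum_r c_r f_{i,r}v=0$, applies $e_{i,s}$ to obtain $\sum_r c_r\mu_{i,r+s}=0$, and concludes that $q_i(u)\psi_i(u)$ is a polynomial for $q_i(u)=\sum_r c_ru^r$, which gives $\psi_i(u)\in\kk(u)$.
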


\begin{proof}
Fix a highest vector $v$ and $i\in I$.
Since $V$ is finite dimensional, there exist $c_r\in\kk$ such that
\[
 \sum_{r=0}^N c_r f_{i,r}v=0,
 \qquad c_N\ne0.
\]
Applying $e_{i,s}$ and using $e_{i,s}v=0$, we obtain
\[
 0
 =
 \sum_{r=0}^N c_r[e_{i,s},f_{i,r}]v
 =
 \left(\sum_{r=0}^N c_r\mu_{i,r+s}\right)v.
\]
As $v\ne0$, this gives
\[
 \sum_{r=0}^N c_r\mu_{i,r+s}=0
 \qquad(s\geq0).
\]
Put $q_i(u)=\sum_{r=0}^N c_ru^r$.
For every $s\geq0$,
\[
 [u^{-s-1}]\bigl(q_i(u)\psi_i(u)\bigr)
 =
 d_i\sum_{r=0}^N c_r\mu_{i,r+s}
 =
 0.
\]
Thus $q_i(u)\psi_i(u)$ is a polynomial, and $\psi_i(u)$
is rational.
\end{proof}

In characteristic zero, a monic polynomial $D(u)$
is uniquely determined by the ratio $D(u+d)/D(u)$
for any fixed $d\ne0$.
In characteristic $p$, this is no longer true when
$d\in\mathbb F_p^\times$: multiplying $D(u)$ by
$u^p-u-c$ leaves the ratio unchanged, 
see \cite[Section~3.2]{Kal} and \cite[Remark~3]{CHT}.
To obtain unique monic polynomial parameters, we
exclude these factors. Call a monic polynomial $D\in\kk[u]$ \emph{$p$-reduced}
if it is not divisible by $u^p-u-c$ for any $c\in\kk$. For every $\xi\in\kk$, one has
\[
 u^p-u-(\xi^p-\xi)
 =
 \prod_{a\in\mathbb F_p}(u-\xi-a).
\]
Since $\kk$ is algebraically closed, $D$ is $p$-reduced
if and only if, for every $\xi\in\kk$, there is some
$a\in\mathbb F_p$ such that $D(\xi+a)\ne0$.

\begin{lemma}\label{lem:rep-rational-norm}
Let $d\in\mathbb F_p^\times$, and let $\psi(u)\in\kk(u)$
have expansion in $1+u^{-1}\kk[[u^{-1}]]$ at infinity.
Then
\begin{equation}\label{eq:rep-scalar-norm}
 \prod_{a\in\mathbb F_p}\psi(u+a d)=1
\end{equation}
if and only if there is a unique $p$-reduced polynomial
$D\in\kk[u]$ such that
\begin{equation}\label{eq:rep-scalar-ratio}
 \psi(u)=\frac{D(u+d)}{D(u)}.
\end{equation}
\end{lemma}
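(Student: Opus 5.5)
The proof will factor $\psi$ into linear factors and work with its divisor modulo the additive action of $d\mathbb F_p$.

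For the easy direction, suppose $\psi(u)=D(u+d)/D(u)$ for a monic $D$. Then the product over $a\in\mathbb F_p$ telescopes:
\[
\prod_{a\in\mathbb F_p}\frac{D(u+(a+1)d)}{D(u+ad)}=1,
\]
because $a\mapsto a+1$ permutes $\mathbb F_p$. This identity holds in $\kk(u)$, and hence also for the expansions at infinity.

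For the converse, write $\psi=A/B$ with $A,B$ coprime and monic. Since $\psi\in 1+u^{-1}\kk[[u^{-1}]]$, we have $\deg A=\deg B$ and leading coefficient $1$. Encode $\psi$ by its divisor $\operatorname{div}\psi=\sum_\xi n_\xi[\xi]$, a finitely supported function $n:\kk\to\mathbb Z$ with $\sum_\xi n_\xi=0$. Then $\psi(u+ad)$ has divisor $\sum_\xi n_\xi[\xi-ad]$. The norm condition \eqref{eq:rep-scalar-norm} therefore says that, for every orbit $O=\xi+d\mathbb F_p$ (which equals $\xi+\mathbb F_p$ since $d\ne0$), the total $\sum_{\eta\in O}n_\eta$ vanishes. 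Indeed, the divisor of the product at any point $\eta$ equals $\sum_a n_{\eta+ad}$, the orbit sum, and a rational function with zero divisor and value $1$ at infinity is $1$. So the task reduces to a combinatorial statement about functions on each orbit $O\simeq\mathbb Z/p$.

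Fix an orbit, parametrized as $\xi+jd$ for $j\in\mathbb Z/p$, and write $n_j$ for the multiplicities. We want multiplicities $m_j\geq0$ of the roots of $D$ with
\[
n_j=m_{j+1}-m_j.
\]
This is because $D(u+d)/D(u)$ has divisor $\sum_\eta m_\eta([\eta-d]-[\eta])$, whose value at $\xi+jd$ is $m_{j+1}-m_j$. Since $\sum_j n_j=0$, the difference equation on the cycle $\mathbb Z/p$ has solutions, and they are unique up to adding a constant to all $m_j$. Choose the constant so that $\min_j m_j=0$. This is exactly the condition that $D$ does not vanish at every point of the orbit, which by the remark before the lemma is the $p$-reducedness condition for that orbit. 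On orbits where $n\equiv0$ we take $m\equiv0$. Set $D=\prod_\eta(u-\eta)^{m_\eta}$; it is monic, and $D(u+d)/D(u)$ and $\psi$ have the same divisor and both have value $1$ at infinity, so they are equal.

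For uniqueness, suppose $D_1,D_2$ are $p$-reduced and give the same ratio. Then on each orbit their multiplicity functions have equal differences, so they differ by a constant $c_O$. Each has minimum $0$ on the orbit, since otherwise it would be divisible by $\prod_a(u-\xi-a)=u^p-u-(\xi^p-\xi)$. Hence $c_O=0$ and $D_1=D_2$.

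The main point needing care is the translation from the series identity to divisors: I would note that all shifted expansions are expansions of the rational functions $\psi(u+ad)$, so the product identity in $\kk[[u^{-1}]]$ is equivalent to the identity in $\kk(u)$. Everything after that is bookkeeping.
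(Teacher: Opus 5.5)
Your proof is correct, but it takes a different route from the paper.

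\emph{The paper's route.} Existence comes from Hilbert's Theorem~90 for the cyclic extension $\kk(u)/\kk(u^p-u)$, whose Galois group is generated by $\sigma\colon f(u)\mapsto f(u+d)$. This gives some $f\in\kk(u)^\times$ with $\psi(u)=f(u+d)/f(u)$. The paper then writes $f=A/B$ and multiplies by the $\sigma$-invariant polynomial $\prod_{j=0}^{p-1}B(u+jd)$ to obtain a polynomial solution. Finally it strips off the $\sigma$-invariant factors $u^p-u-c$ and normalizes. Uniqueness is not proved but cited from Kalinov.

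\emph{Your route.} You pass to divisors on the affine line and solve the difference equation $n_j=m_{j+1}-m_j$ orbit by orbit on $\xi+\mathbb F_p\simeq\mathbb Z/p$. In effect you prove the relevant case of Hilbert~90 by hand. Translation by $d$ acts freely on $\kk$, so the divisor group is a free $\mathbb Z[\langle\sigma\rangle]$-module, with $\sigma$ acting by translation. Hence the kernel of the norm on divisors is the image of $\sigma-1$.

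\emph{What each buys.} Your argument gives an explicit description of $D$: its root multiplicities are the normalized partial sums of the multiplicities of $\psi$ along each orbit. It also makes $p$-reducedness transparent as the normalization $\min_j m_j=0$ of the free additive constant on each orbit. And it proves uniqueness in the same stroke, self-contained. It does use that $\kk$ is algebraically closed, which holds throughout that section. The paper's argument is shorter given Hilbert~90, and its existence step does not need to factor $\psi$, but it leaves uniqueness to the literature.
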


\begin{proof}
A ratio of the form \eqref{eq:rep-scalar-ratio}
satisfies \eqref{eq:rep-scalar-norm}, since
\[
 \prod_{a\in\mathbb F_p}
 \frac{D(u+(a+1)d)}{D(u+a d)}
 =
 1.
\]
Conversely, let $\sigma$ be the automorphism of
$\kk(u)$ given by $\sigma(f(u))=f(u+d)$.
The extension $\kk(u)/\kk(u^p-u)$ is cyclic of
degree $p$, with Galois group generated by $\sigma$.
Condition \eqref{eq:rep-scalar-norm} says that
$\psi$ has norm $1$. By Hilbert's Theorem~90
\cite[Chapter~X, \S1]{SerreLF},
there is $f\in\kk(u)^\times$ such that
\[
 \psi(u)=\frac{f(u+d)}{f(u)}.
\]
Write $f=A/B$ with nonzero $A,B\in\kk[u]$, and set
\[
 C(u)=\prod_{j=0}^{p-1}B(u+jd).
\]
Since $C(u+d)=C(u)$, the product
$g=Cf$ is a nonzero polynomial satisfying
\[
 \psi(u)=\frac{g(u+d)}{g(u)}.
\]
Remove from $g$ all factors of the form $u^p-u-c$
and divide by the leading coefficient, 
the resulting $p$-reduced polynomial still
satisfies \eqref{eq:rep-scalar-ratio}. The uniqueness follows from \cite[Remark, page 6985]{Kal}.
\end{proof}

\begin{theorem}\label{thm:rep-restricted-classification}
Let $\boldsymbol\psi$ be a restricted tuple.
The simple $y_\kk(\mathfrak g)$-module
$L_\kk(\boldsymbol\psi)$ is
finite dimensional if and only if there is a unique tuple of $p$-reduced
polynomials $D_i\in\kk[u]$ such that
\begin{equation}\label{eq:rep-polynomial-criterion}
 \psi_i(u)=\frac{D_i(u+d_i)}{D_i(u)}
 \qquad(i\in I).
\end{equation}
Consequently, tuples of $p$-reduced polynomials
parametrize the isomorphism classes of
finite dimensional simple $y_\kk(\mathfrak g)$-modules.
The same parametrization holds for
$Y_\kk^{[p]}(\mathfrak g)$ through the isomorphism
in Theorem~\ref{thm:small-restricted}.
\end{theorem}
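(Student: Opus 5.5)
The argument splits into necessity and sufficiency; the parametrization statements then follow from Propositions~\ref{prop:rep-simple-head} and~\ref{prop:rep-highest-vector}.

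For necessity, suppose $L_\kk(\boldsymbol\psi)$ is finite dimensional. By Lemma~\ref{lem:rep-highest-rationality}, each $\psi_i(u)\in\kk(u)$. Since $\boldsymbol\psi$ is restricted, $\prod_{a\in\mathbb F_p}\psi_i(u+ad_i)=1$. Here $d_i\in\{1,\tfrac12,2\}$, and its image in $\kk$ lies in $\mathbb F_p^\times$ because $p>3$. So Lemma~\ref{lem:rep-rational-norm} applies with $d=d_i$ and gives a unique $p$-reduced $D_i$ satisfying \eqref{eq:rep-polynomial-criterion}. Uniqueness of the whole tuple is componentwise.

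For sufficiency, suppose the $D_i$ are $p$-reduced with $\psi_i(u)=D_i(u+d_i)/D_i(u)$. Corollary~\ref{cor:rep-finite-realization} gives a finite dimensional $\mathscr Y_\kk(\mathfrak g)$-module $\overline{\mathcal L}$, generated by a nonzero $\overline v$ with $e_{i,r}\overline v=0$ and $H_i(u)\overline v=\psi_i(u)\overline v$. Restrict to $y_\kk(\mathfrak g)$ and set $W=y_\kk(\mathfrak g)\overline v$. This is a nonzero finite dimensional highest weight $y_\kk(\mathfrak g)$-module with highest weight $\boldsymbol\psi$, so it is a quotient of $Z_\kk(\boldsymbol\psi)$. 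By Proposition~\ref{prop:rep-simple-head}, $Z_\kk(\boldsymbol\psi)$ has a unique maximal submodule, so every nonzero quotient of it surjects onto $L_\kk(\boldsymbol\psi)$. Hence $L_\kk(\boldsymbol\psi)$ is a quotient of $W$ and is finite dimensional.

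The restricted condition is automatic in this situation by Lemma~\ref{lem:rep-Cartan-characters}, and it also holds because $D(u+d)/D(u)$ has norm $1$. A small point to check is that the $\mathscr Y_\kk$-highest-vector conditions restrict correctly. They do: the images $\overline e_{i,r}$ kill $\overline v$, and $\overline h_{i,r}$ acts by $\mu_{i,r}$, read off from \eqref{eq:rep-reduced-series}.

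For the consequences, Proposition~\ref{prop:rep-highest-vector} shows every finite dimensional simple module is some $L_\kk(\boldsymbol\psi)$ with $\boldsymbol\psi$ restricted, and Proposition~\ref{prop:rep-simple-head} shows $\boldsymbol\psi$ is determined by the isomorphism class. Conversely, any tuple of $p$-reduced polynomials gives a restricted $\boldsymbol\psi$ with finite dimensional $L_\kk(\boldsymbol\psi)$. The map $\mathbf D\mapsto L_\kk(\boldsymbol\psi)$ is injective by the uniqueness in Lemma~\ref{lem:rep-rational-norm}, so it is a bijection. Finally, Theorem~\ref{thm:small-restricted} transports the classification to $Y_\kk^{[p]}(\mathfrak g)$.

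The main subtle point is that a $p$-reduced $D_i$ may have degree at least $p$. Then $h_{i,0}$ acts on $\overline v$ by $m_i \bmod p$, and nothing forces the classical dominance constraints. However, sufficiency only uses the lift to the Witt vectors, which works for every monic polynomial, so no further argument is needed there. I expect no real obstacle beyond verifying that $d_i\in\mathbb F_p^\times$ and the restriction step.
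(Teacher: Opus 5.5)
Your proposal is correct and follows the paper's proof essentially step for step. Necessity goes through Lemma~\ref{lem:rep-highest-rationality} and Lemma~\ref{lem:rep-rational-norm}, using $d_i\in\mathbb F_p^\times$; sufficiency restricts the reduced lattice module of Corollary~\ref{cor:rep-finite-realization} to $y_\kk(\mathfrak g)$ and takes the unique simple quotient from Proposition~\ref{prop:rep-simple-head}, and your use of Propositions~\ref{prop:rep-highest-vector} and~\ref{prop:rep-simple-head} for the parametrization just spells out what the paper leaves implicit.
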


\begin{proof}
Suppose first that
$L_\kk(\boldsymbol\psi)$ is
finite dimensional.
Lemma~\ref{lem:rep-highest-rationality} shows that
every $\psi_i(u)$ is rational.
Since $\boldsymbol\psi$ is restricted  and
$d_i\in\mathbb F_p^\times$,
Lemma~\ref{lem:rep-rational-norm} gives a unique
monic $p$-reduced polynomial $D_i$ satisfying
\eqref{eq:rep-polynomial-criterion}.

Conversely, suppose that such $p$-reduced polynomials $D_i$ exist.
By Corollary~\ref{cor:rep-finite-realization}, there is
a finite dimensional $\mathscr Y_\kk(\mathfrak g)$-module
containing a nonzero highest vector $\overline v$ with
\[
 H_i(u)\overline v
 =
 \frac{D_i(u+d_i)}{D_i(u)}\overline v.
\]
Restrict the action to $y_\kk(\mathfrak g)$, and put
$W=y_\kk(\mathfrak g)\overline v$. Then $W$ is a nonzero finite dimensional
$y_\kk(\mathfrak g)$-module generated by the highest
vector $\overline v$, with highest weight
$\boldsymbol\psi$.
By Proposition~\ref{prop:rep-simple-head}, its unique
simple quotient is
$L_\kk(\boldsymbol\psi)$,
which is therefore finite dimensional.
\end{proof}

\begin{remark*}
In view of \eqref{eq:rep-charzero-highest}, in characteristic zero, finite dimensional simple
 $Y(\fg)$-modules are parametrized by tuples of monic
Drinfeld polynomials.
In characteristic $p$, 
we call the polynomials $D_i$ in
Theorem~\ref{thm:rep-restricted-classification}
the \emph{restricted Drinfeld polynomials}
of $L_\kk(\boldsymbol\psi)$.
\end{remark*}

\bigskip
\noindent
\textbf{Acknowledgment.}
We thank Prof.~Naihuan Jing and  Prof.~Huafeng Zhang for helpful suggestions and comments. This work is supported by the National Natural 
Science Foundation of China (12401028, 12671035) and
the Natural Science Foundation of Hubei Province (2025AFB716). In the preparation of this paper, we used ChatGPT for mathematical discussions and assistance
with the exposition. In particular, ChatGPT assisted
us in developing and checking parts of the finite difference calculations
\eqref{eq:rank-Ac-shifts-1}--\eqref{eq:rank-Ac-shifts-2} and the recurrence \eqref{eq:rank-Z-recurrence} used in the proof of the rank one basic identity (Theorem \ref{thm:rank-basic}). It also helped clarify and check the Harish--Chandra expression \eqref{eq:rep-HC-pairing} for the bilinear form, and the proof of finite freeness using
the integral pairing (Proposition \ref{prop:rep-stable-lattice}).  All statements and proofs were independently checked by the authors, who take full
responsibility for the content of the paper.


\begin{thebibliography}{cccccccc}



\bibitem[BFM]{BFM}
Y.~Billig, V.~Futorny, A.~Molev,
{\em Verma modules for Yangians},
Lett. Math. Phys. \textbf{78} (2006), no.~1, 1--16.

\bibitem[BMM]{BMM}
A.~Bianchi, T.~Macedo, A.~Moura,
{\em On Demazure and local Weyl modules for affine hyperalgebras},
Pacific J. Math. \textbf{274} (2015), no.~2, 257--303.

\bibitem[BT]{BT}
J.~Brundan, L.~Topley,
{\em The $p$-centre of Yangians and shifted Yangians},
Mosc. Math. J. \textbf{18} (2018), no.~4, 617--657.

\bibitem[CHH]{CHH}
H.~Chang, H.~Hu, Y.~Hu,
{\em Shuffle algebra realizations for modular Yangians},
arXiv:2603.23243 (2026).


\bibitem[CHT]{CHT}
H.~Chang, J.~Hu, L.~Topley,
{\em Modular representations of the Yangian $Y_2$},
J. Lond. Math. Soc. (2) \textbf{111} (2025), no.~1, e70056.



\bibitem[CP1]{CPD}
V.~Chari, A.~Pressley,
{\em Yangians, integrable quantum systems and Dorey's rule},
Comm. Math. Phys. \textbf{181} (1996), no.~2, 265--302.

\bibitem[CP2]{CP}
V.~Chari, A.~Pressley,
{\em Quantum affine algebras at roots of unity},
Represent. Theory \textbf{1} (1997), 280--328.

\bibitem[D]{D}
V.~G.~Drinfeld,
{\em A new realization of Yangians and quantized affine algebras},
Soviet Math. Dokl. \textbf{36} (1988), 212--216.

\bibitem[Gar]{Gar}
H.~Garland,
{\em The arithmetic theory of loop algebras},
J. Algebra \textbf{53} (1978), no.~2, 480--551.

\bibitem[GT]{GT}
S.~M.~Goodwin, L.~Topley,
{\em Restricted shifted Yangians and restricted finite $W$-algebras},
Trans. Amer. Math. Soc. Ser. B \textbf{8} (2021), 190--228.

\bibitem[GTL]{GTL1}
S.~Gautam, V.~Toledano Laredo,
{\em Yangians and quantum loop algebras},
Selecta Math. (N.S.) \textbf{19} (2013), no.~2, 271--336.


\bibitem[Jan]{JanG}
J.~C.~Jantzen,
{\em Representations of Algebraic Groups},
2nd ed.,
Math. Surveys Monogr. \textbf{107},
Amer. Math. Soc., Providence, RI, 2003.

\bibitem[JLM]{JLM}
N.~Jing, M.~Liu and A.~Molev,
{\em Isomorphism between the $R$-matrix and Drinfeld
presentations of Yangian in types $B$, $C$ and $D$},
Comm. Math. Phys. \textbf{361} (2018),
827--872.

\bibitem[JM]{JM}
D.~Jakeli\'c, A.~Adrega de Moura,
{\em Finite-dimensional representations of hyper loop algebras},
Pacific J. Math. \textbf{233} (2007), no.~2, 371--402.


\bibitem[Kal]{Kal}
D.~Kalinov,
{\em Finite-dimensional representations of Yangians in complex rank},
Int. Math. Res. Not. IMRN (2020), no.~20, 6967--6998.

\bibitem[Kos]{Kos}
B.~Kostant,
{\em Groups over $\mathbb Z$},
in {\em Algebraic Groups and Discontinuous Subgroups},
Proc. Sympos. Pure Math. \textbf{9},
Amer. Math. Soc., Providence, RI, 1966, 90--98.

\bibitem[Lev]{Lev}
S.~Z.~Levendorski\u{\i},
{\em On PBW bases for Yangians},
Lett. Math. Phys. \textbf{27} (1993), no.~1, 37--42.

\bibitem[Lus]{Lus}
G.~Lusztig,
Quantum groups at roots of $1$,
Geom. Dedicata \textbf{35} (1990),
89--113.

\bibitem[Mol]{Mol07}
A.~I.~Molev,
{\em Yangians and classical Lie algebras},
Math. Surveys Monogr. \textbf{143},
Amer. Math. Soc., Providence, RI, 2007.

\bibitem[Ser]{SerreLF}
J.~P.~Serre,
{\em Local Fields},
Grad. Texts in Math. \textbf{67},
Springer-Verlag, New York, 1979.

\bibitem[Sha]{Sha}
N.~N.~Shapovalov,
{\em On a bilinear form on the universal enveloping algebra
of a complex semisimple Lie algebra},
Funct. Anal. Appl. \textbf{6} (1972), no.~4, 307--312.

\bibitem[TG]{TG}
Y.~Tan, N.~Guay,
{\em Local Weyl modules and cyclicity of tensor products for Yangians},
J. Algebra \textbf{432} (2015), 228--251.

\bibitem[Zha]{Zhang}
H.~Zhang,
{\em Theta series for quantum loop algebras and Yangians},
Commun. Math. Phys. \textbf{405} (2024), no.~10,
Paper No.~230.
\end{thebibliography}
\end{document}